
\documentclass[11pt,a4paper]{article}
\usepackage[a4paper, margin=3cm]{geometry}
\usepackage[T1]{fontenc}
\usepackage[utf8]{inputenc}
\usepackage{lmodern}
\usepackage{csquotes}

\ifx\french\undefined
\usepackage[english]{babel}
\else
\usepackage[french]{babel}
\fi

\usepackage{amsmath}
\usepackage{amsthm}
\usepackage{dsfont}
\usepackage{mathtools}
\usepackage{amssymb}
\usepackage{amsfonts}
\usepackage{mathrsfs}

\usepackage{graphicx}
\usepackage[shortlabels]{enumitem}
\usepackage[singlelinecheck=false,justification=centering]{caption}
\usepackage[hypcap]{subcaption} 

\usepackage[scaled=1,sups]{XCharter}
\usepackage[charter,varbb,smallerops,scaled=1.08]{newtxmath}
\linespread{1.1}


\usepackage{xcolor}
\usepackage{hyperref}

\definecolor{darkblue}{rgb}{0,0,0.5}
\definecolor{darkred}{rgb}{0.5,0,0}
\definecolor{darkgreen}{rgb}{0,0.5,0}

\hypersetup{
  unicode=true,          
  pdffitwindow=false,     
  pdfstartview={FitH},    
  pdftitle={},    
  pdfauthor={},     
  pdfsubject={},   
  pdfkeywords={}, 
  colorlinks=true,       
  linkcolor=darkblue,          
  citecolor=darkred,        
  filecolor=magenta,      
  urlcolor=darkgreen           
}

\usepackage[numbers,sort&compress]{natbib}
\usepackage{doi}

\usepackage{parskip}
\begingroup
\makeatletter
\@for\theoremstyle:=definition,remark,plain\do{%
  \expandafter\g@addto@macro\csname th@\theoremstyle\endcsname{%
    \addtolength\thm@preskip\parskip }%
}
\endgroup

\theoremstyle{plain}
\newtheorem{theorem}{Theorem}

\newtheorem{prop}[theorem]{Proposition}
\newtheorem{lemma}[theorem]{Lemma}

\theoremstyle{definition}
\newtheorem{definition}[theorem]{Definition}

\newtheorem{remark}[theorem]{Remark}

\newcommand{\Rplus}{[0,\infty)}
\newcommand{\R}{\mathbb{R}}
\newcommand{\N}{\mathbb{N}}

\renewcommand{\P}{\mathbb{P}}
\newcommand{\E}{\mathbb{E}}
\newcommand{\1}{\mathds{1}}
\newcommand{\ed}{\overset{(d)}{=}}

\newcommand{\dd}{\mathrm{d}}

\newcommand{\point}{\,\cdot\,}
\newcommand{\norm}[1]{\lVert #1 \rVert}
\newcommand{\abs}[1]{\lvert #1 \rvert}

\renewcommand{\epsilon}{\varepsilon}
\renewcommand{\rho}{\varrho}
\renewcommand{\phi}{\varphi}
\renewcommand{\emptyset}{\varnothing}

\newcommand{\partit}[1]{\mathcal{P}_{#1}}
\newcommand{\partitemb}[1]{\mathcal{P}_{#1}^{2,\preceq}}

\newcommand{\fraisse}{\mathscr{P}}
\newcommand{\monoline}{M}
\newcommand{\im}{\mathrm{im}}
\newcommand{\id}{\mathrm{id}}
\newcommand{\masspart}{\mathscr{P}_{\mathrm{m}}}
\newcommand{\masspartemb}{\mathscr{P}_{\mathrm{m},\preceq}}

\newcommand{\block}[1]{\hspace{.05\textwidth}\begin{minipage}{.90\textwidth} #1 \end{minipage}}

\newcommand{\restr}[1]{_{|[#1]}}


\title{\vspace{-2em} Trees within trees II: Nested Fragmentations}
\author{Jean-Jil \sc Duchamps \vspace{1em} \\ \sc Sorbonne Université \vspace{-1em}}
\date{\today}

\begin{document}

\maketitle

\begin{abstract}
  Similarly as in \cite{snec} where nested coalescent processes are studied, we generalize the definition of partition-valued homogeneous Markov fragmentation processes to the setting of nested partitions, i.e.\ pairs of partitions $(\zeta,\xi)$ where $\zeta$ is finer than $\xi$.
  As in the classical univariate setting, under exchangeability and branching assumptions, we characterize the jump measure of nested fragmentation processes, in terms of erosion coefficients and dislocation measures.
  Among the possible jumps of a nested fragmentation, three forms of erosion and two forms of dislocation are identified -- one of which being specific to the nested setting and relating to a bivariate paintbox process.
\end{abstract}

\begin{table}[b!]
  \small
  \rule{.5\linewidth}{.4pt}
  
  \textbf{Keywords and phrases.} fragmentations; exchangeable; partition; random tree; coalescent; population genetics; gene tree; species tree; phylogenetics; evolution.
  \smallskip
  
  \textbf{MSC 2010 Classification.} 60G09,60G57,60J25,60J35,60J75,92D15. 
\end{table}

\vspace*{-1em}
{
\small
\tableofcontents
}

\section{Introduction}

Evolutionary biology aims at tracing back the history of species, by identifying and dating the relationships of ancestry between past lineages of extant individuals.
This information is usually represented by a tree or phylogeny, species
corresponding to leaves of the tree and speciation events (point in time where several species descend from a single one) corresponding to internal nodes~\cite{lambertProbabilistic2017,semplePhylogenetics2003}.

Modern methods consist in analyzing and comparing genetic data from samples of
individuals to statistically infer their phylogenetic tree.
Probabilistic tree models have been well-developed in the last decades -- either
from individual-based population models like the classical Wright-Fisher model \cite{Lam08,semplePhylogenetics2003,Ber09,Eth11},
or from time-forward branching processes, where the branching particles are species (see for instance Aldous's Markov branching models~\cite{aldousProbability1996} and the revolving literature \cite{chenNew2009,fordProbabilities2006,craneGeneralized2017,haasContinuum2008}) -- allowing for inference from genetic data.
A challenge is that trees inferred from different parts of the genome generally fail to coincide, each of them being understood as an alteration of a
``true'' underlying phylogeny (which we call the \emph{species tree}).

To understand the relation between \emph{gene trees} and the species tree, our goal is to identify a class of Markovian models coupling the evolution of both trees, making the assumption that in general, several gene lineages coexist within the same species, and at speciation events one or several gene lineages diverge from their neighbors to form a new species, i.e.\ we model the problem as a \emph{tree within a tree} \cite{pageTrees1998,pageGene1997,doyleTrees1997,maddisonGene1997}.
See Figure~\ref{fig:example} for an instance of a simple nested genealogy where discrepancies arise between the resulting gene tree and species tree.
\begin{figure}[ht]
\centering
\includegraphics[width=.7\linewidth]{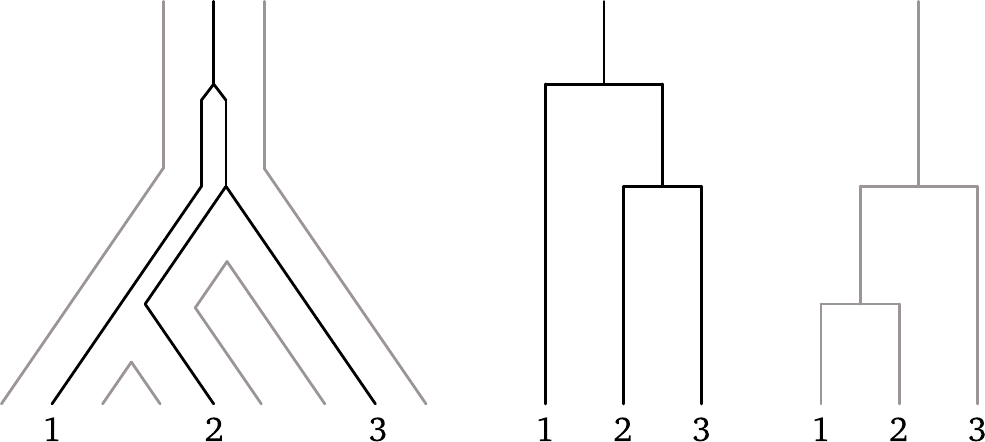}
\caption{Example of a nested tree where the gene tree (in black) does not coincide with the species tree (in gray).}\label{fig:example}
\end{figure}

Recent research aims at defining mathematical processes giving rise to such nested trees, generalizing several well-studied univariate (we will sometime use this term as opposed to ``nested'') processes.
Some work in progress involves a nested version~\cite{BRSS18,LS} of the Kingman coalescent~\cite{kingmanCoalescent1982} (considered the neutral model for evolution, appearing as a scaling limit of many individual-based population models).
In~\cite{snec} we study a nested generalization of $\Lambda$-coalescent processes \cite{sagitovGeneral1999,bertoinRandom2006,pitmanCoalescents1999} and characterize their distribution.
Our present goal is to generalize the forward-time branching models originated from Aldous~\cite{aldousProbability1996}.
His assumptions (which will be formally defined for our context in Section
\ref{sec:proj_markov}) are basically that the random process of evolution is
homogeneous in time and that the law of the process is invariant under both relabeling and resampling of individuals (we then say the process is \emph{exchangeable} and \emph{sampling consistent}).
We are interested in the partition-valued processes satisfying these assumptions,
i.e.\ the so-called fragmentation processes
\cite{bertoinRandom2006,haasContinuum2008},
and in this article we generalize their definition to \emph{nested
partition-valued} processes to model jointly a gene tree within a species tree.

Crane~\cite{craneGeneralized2017} also generalizes Aldous's Markov branching models to study the gene tree/species tree problem but uses a different approach to the one we use here.
Indeed, his model is such that first the entire species tree $\mathbf{t}$ is drawn according to some probability, and then the gene tree $\mathbf{t}'$ is constructed thanks to a generalized Markov branching model that depends on $\mathbf{t}$.
In the meantime, our goal is to characterize the class of models in which there is a joint Markov branching construction of both the gene tree and the species tree, under the assumptions of exchangeability and sampling consistency.

In particular our main result Theorem~\ref{thm:inner_charac}, which will be formally stated in Section~\ref{sec:inner_branch}, consists in showing that nested fragmentation processes satisfying natural branching properties are uniquely characterized by
\begin{itemize}
  \item three \emph{erosion parameters} $c_{\mathrm{out}},c_{\mathrm{in},1}$ and $c_{\mathrm{in},2}$ (rates at which a unique lineage can fragment out of its mother block, in three different situations);
  \item two \emph{dislocation measures} $\nu_{\mathrm{out}}$ and $\nu_{\mathrm{in}}$ that are Poissonian intensities of how blocks instantaneously fragment into several new blocks with macroscopic frequencies.
\end{itemize}

The article is organized as follows.
Section~\ref{sec:defs} briefly introduces some definitions and notation used throughout the paper.
In Section~\ref{sec:proj_markov} we define our exchangeability and sampling consistency properties -- or projective Markov property --, and show their equivalence to a ``strong exchangeability'' property in a fairly general setting.
We also recall some results in the univariate case which we seek to generalize to the nested case.
In Section~\ref{sec:outer_branch} we formulate some branching property assumptions, showing how they lead to simplifications in the representation of semi-groups of fragmentations, and giving a natural Poissonian construction of such processes.
Under an additional branching property assumption, Section \ref{sec:inner_branch} is devoted to the full characterization of the semi-group of simple nested fragmentation processes, in terms of \emph{erosion} and \emph{dislocation measures}.
It is shown that dislocations, similarly as in the univariate case, can be understood as (bivariate) paintbox processes.
Finally Section~\ref{sec:binary} briefly shows how our main result, Theorem~\ref{thm:inner_charac}, translates in simpler terms when we make the classical biological assumption that all splits are binary.

\section{Definitions, notation}\label{sec:defs}

For a set $S$, write $\partit S$ for the set of partitions of $S$:
\[ \partit{S} := \{\pi \subset \mathfrak{P}(S)\setminus\{\emptyset\},\; \forall A \neq B \in \pi, A\cap B = \emptyset \text{ and } \textstyle\bigcup_{A\in \pi} A = S \}, \]
where $\mathfrak{P}(S)$ denotes the power set of $S$.

For $S,S'$ two sets, $\pi\in \partit{S}$ and $\sigma : S' \to S$ an \textbf{injection}, we write
\[ \pi^{\sigma} := \{\sigma^{-1}(A), \; A \in \pi\}\setminus\{\emptyset\}, \]
and if $\mu$ is a measure on $\partit{S}$ then we write $\mu^\sigma$ for the push-forward of $\mu$ by the map $\pi \mapsto \pi^{\sigma}$.

Note that if $S'' \overset{\tau}{\rightarrow} S'\overset{\sigma}{\rightarrow} S$ are injections, then we have $\pi^{\sigma\tau} = (\pi^{\sigma})^{\tau}$, and $\mu^{\sigma\tau} = (\mu^{\sigma})^{\tau}$.

For $S' \subset S$, there is a natural surjective function $r_{S,S'} : \partit S \to \partit {S'}$ called the restriction, defined by
\[ r_{S,S'}(\pi) = \pi_{|S'} := \{A \cap S', \; A \in \pi\} \setminus \{\emptyset\}. \]
Note that $\pi_{|S'} = \pi^\sigma$ for $\sigma : S' \to S, x \mapsto x$ the canonical injection.

There is always a partial order on $\partit S$, denoted $\preceq$ and defined as:
\[ \pi \preceq \pi' \quad\text{ if }\quad \forall (A,B) \in \pi\times\pi', \, A \cap B \neq \emptyset \Rightarrow A \subset B, \]
that is $\pi \preceq \pi'$ if $\pi$ is finer than $\pi'$.
We will work on the space consisting of two nested partitions, which we will note $\partitemb{S}$:
\[ \partitemb{S} := \{(\zeta, \xi) \in \partit{S}^2, \; \zeta \preceq \xi\}. \]
We equip the space $\partitemb{S}$ with a partial order $\preceq$ defined naturally as
\[ (\zeta, \xi) \preceq (\zeta', \xi') \text{ if } \zeta \preceq \zeta' \text{ and } \xi \preceq \xi'. \]
%
Let us now define, for $n \in \N$, $[n] := \{1, \ldots, n\}$ and $[\infty] := \N$, and for $n \in \N \cup \{\infty\}$:
\[ \partit n := \partit{[n]} = \{ \zeta \text{ partition of } [n]\}. \]
We will generally label the blocks of a partition $\pi = \{\pi_1, \pi_2, \ldots\}$, in the unique way such that
\[ \min \pi_1 < \min \pi_2 < \ldots \]
The space $\partitemb{\infty}$ is endowed with a distance $d$ which makes it compact, defined as follows:
\[ d(\pi, \pi') = \left (\sup \{n \in \N, \; \pi\restr{n} = \pi\restr{n} \} \right )^{-1}, \]
with the convention $(\sup \N)^{-1} = 0$.

For $k\leq n \leq \infty$, $\sigma : [k] \to [n]$ an injection and $\pi = (\zeta, \xi) \in \partitemb{n}$, we write
\[ \pi^\sigma := (\zeta^\sigma, \xi^\sigma)\in\partitemb{k}. \]
Also, we write $\pi\restr{k} := (\zeta\restr{k}, \xi\restr{k}) \in \partitemb{k}$.

A measure $\mu$ on $\partit{n}$ or on $\partitemb{n}$ is said to be \textbf{exchangeable} if for any permutation $\sigma : [n] \to [n]$, we have
\[ \mu^{\sigma} = \mu. \]
A random variable $\Pi$ taking values in $\partit{n}$ or in $\partitemb{n}$ is said to be \textbf{exchangeable} if for any permutation $\sigma : [n] \to [n]$, we have
\[ \Pi^{\sigma} \ed \Pi, \]
that is if its distribution is exchangeable.
Similarly, a random process $(\Pi(t), t\geq 0)$ taking values in $\partit{n}$ or in $\partitemb{n}$ is said to be \textbf{exchangeable} if for any initial state $\pi_0$ and any permutation $\sigma : [n] \to [n]$, we have
\[ (\Pi(t)^{\sigma}, t\geq 0) \text{ under } \P_{\pi_0}\quad \ed \quad(\Pi(t), t\geq 0) \text{ under } \P_{\pi_0^{\sigma}}, \]
where $\P_{\pi}$ is the distribution of the process started from $\pi$.

Finally, a measure or a random process with values in $\partit{\infty}$ or $\partitemb{\infty}$ will be called \textbf{strongly exchangeable} if its distribution is invariant under the action of \emph{injections}.
Note that while for processes this is a strictly stronger assumption than being exchangeable (see Section~\ref{sec:strong_exch_markov_process}), for measures the two properties are equivalent.

In the following we only consider time-homogeneous Markov processes.

\section{Projective Markov property and strong exchangeability}\label{sec:proj_markov}

\subsection{Projective Markov process}

For each $n\in\N$, let $A_n$ be a finite non-empty set.
Assume there are surjective maps $r_{m,n}:A_m\to A_n$ for each $m\geq n$ which satisfy
\begin{align*}
\forall p\geq m\geq n\geq 1, &\quad r_{m,n} \circ r_{p,m} = r_{p,n}, \\
\forall n \in\N, &\quad r_{n,n} = \id_{A_n}.
\end{align*}
The family $(A_n, r_{m,n}, \, m\geq n\geq 1)$ is called a \textbf{finite inverse system}, and we can define the inverse limit
\[ A = \underleftarrow{\lim} \; A_n := \left \{(a_n, n\geq 1) \in \textstyle\prod_{n\in\N}A_n, \; \forall m \geq n, r_{m,n}(a_m) = a_n \right \}, \]
along with the canonical projection maps $r_n : A\to A_n, \; (a_n, n\geq 1) \mapsto a_n$.
A natural distance $d$ can be defined on the space $A$, by
\[ d(a,b) := \left (1/2+\sup\{n\geq 1, \; a_n = b_n\}\right )^{-1}, \]
where we use the conventions $\sup \emptyset = 0$ and $(1/2+\sup\N)^{-1} = 0$.
Note that its topology is then generated by the sets
\[ r_n^{-1}(\{a\}), \; n\geq 1, a\in A_n, \]
which are the balls of radius $1/n$ and center any $c\in r_n^{-1}(a)$.
The assumption that the sets $A_n$ are finite makes the space $(A,d)$ compact, so we can consider stochastic processes with values in $A$.

\begin{remark}
$\partit{\infty} = \underleftarrow{\lim}\; \partit{n}$ and $\partitemb{\infty} = \underleftarrow{\lim}\; \partitemb{n}$ are both inverse limits of finite inverse systems, where the restriction maps are $r_{m,n}: \partit{m} \to \partit{n}, \;\pi \mapsto \pi\restr{n}$.
\end{remark}

\begin{prop} \label{prop:projective_markov_kernel}
Let $X = (X(t), t\geq 0)$ be a stochastic process with values in $A$ the inverse limit of a finite inverse system.
Assume that the following \textbf{projective Markov property} holds:

\block{For all $n\geq 1$, the process $X^n := (r_n(X(t)), t\geq 0)$ is a continuous-time Markov chain in the finite state space $A_n$, whose distribution under $\P_{a}$ depends only on $r_n(a)$.}

Then $X$ is a Markov process, whose distribution is characterized by a transition kernel $K$ from $A$ to $A$ (i.e.\ $K_a(\point)$ is a nonnegative measure on $A$ for all $a\in A$ and $a\mapsto K_a(B)$ is measurable for any $B$ Borel set of $A$) such that
\begin{itemize}
  \item for all $a\in A$, we have $K_{a}(\{a\}) = 0$,
  \item for all $a\in A$ and $a'\in A_n\setminus\{r_n(a)\}$, the Markov chain $X^n$ has a transition rate from $r_n(a)$ to $a'$ equal to
  \[ q^n_{a, a'} = K_{a}\left (r_{n}^{-1}(\{a'\})\right ). \]
\end{itemize}
\end{prop}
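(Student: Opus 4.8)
The plan is to build the kernel $K$ by a projective-limit construction, using the Markov chains $X^n$ on the finite sets $A_n$ as the finite approximations, and then to check that the resulting kernel has the two announced properties.

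First I would record the consistency of the family $(X^n)$: since $r_{m,n}(X^m(t)) = X^n(t)$ pathwise for $m \geq n$, the chain $X^n$ is a lumping of the chain $X^m$. In particular, writing $Q^n = (q^n_{a,a'})_{a,a'\in A_n}$ for the generator of $X^n$, the lumpability forces, for every $a \in A$ and every $a' \in A_n$ with $a' \neq r_n(a)$,
\[
q^n_{r_n(a),\,a'} \;=\; \sum_{\substack{b' \in A_m \\ r_{m,n}(b') = a'}} q^m_{r_m(a),\,b'},
\]
because a jump of $X^n$ out of $r_n(a)$ into $a'$ corresponds exactly to a jump of $X^m$ out of $r_m(a)$ into the fibre $r_{m,n}^{-1}(\{a'\})$. (One should note here that $r_n(a) \neq a'$ does not prevent $r_m(a)$ from lying in $r_{m,n}^{-1}(\{a'\})$; but in that case the corresponding diagonal term is excluded from the sum, which is the reason the formula is stated for $a' \neq r_n(a)$ and why the kernel will be required to kill the diagonal.) Then, for fixed $a \in A$, I would define a set function on the generating algebra of cylinders by
\[
K_a\bigl(r_n^{-1}(\{a'\})\bigr) := q^n_{r_n(a),\,a'} \quad\text{for } a' \neq r_n(a), \qquad K_a\bigl(r_n^{-1}(\{r_n(a)\})\bigr) := 0.
\]
The displayed consistency relation is precisely what is needed for this to be well defined on the algebra generated by the cylinders $r_n^{-1}(\{a'\})$ (each such cylinder is a finite disjoint union of cylinders at any finer level $m$, and the assigned values add up correctly, the diagonal piece always receiving mass $0$). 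Since $A$ is compact metrizable and these cylinders are compact open sets forming a basis, Carathéodory/Kolmogorov extension gives a unique Borel measure $K_a$ on $A$; it is finite on each cylinder $r_n^{-1}(\{a'\})$ (finitely many states, finite rates) hence $\sigma$-finite, and $K_a(\{a\}) = \inf_n K_a(r_n^{-1}(\{r_n(a)\})) = 0$. Measurability of $a \mapsto K_a(B)$ is obtained by a monotone-class argument starting from the cylinders, on which $a \mapsto K_a(r_n^{-1}(\{a'\})) = q^n_{r_n(a),a'}$ is a function of $r_n(a)$ only and hence measurable (indeed locally constant).

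Next I would verify that $X$ is Markov with jump kernel $K$. By hypothesis $X^n$ is Markov with the rates $q^n$, so for any bounded cylinder function $f = g \circ r_n$ the process $f(X(t)) = g(X^n(t))$ has the correct Markovian evolution; since cylinder functions are dense in $C(A)$ (Stone–Weierstrass, using compactness), the finite-dimensional distributions of $X$ under $\P_a$ are determined by the family $(q^n)_{n}$, equivalently by $K$. To identify $K$ as the genuine jump kernel, I would check that it generates the correct finite-rate dynamics at each level: the chain obtained from $X$ by projecting to $A_n$ jumps from $r_n(a)$ to $a' \neq r_n(a)$ at rate $K_a(r_n^{-1}(\{a'\})) = q^n_{r_n(a),a'}$, which is the defining property, and that the total rate out of $r_n(a)$ is finite, so no explosion occurs at finite level; compactness of $A$ then upgrades this to a well-defined (possibly non-Feller) Markov process on $A$ with transition kernel encoded by $K$.

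The main obstacle, I expect, is the well-definedness and $\sigma$-finiteness of $K_a$, rather than the Markov property itself: one must handle carefully the fact that $K_a$ can have infinite total mass (e.g.\ infinitely many possible "small" jumps), so only a $\sigma$-finite measure is obtained, and one must be scrupulous about the role of the diagonal — a jump of $X^m$ that is invisible at level $n$ (i.e.\ lands back in $r_{m,n}^{-1}(\{r_n(a)\})$ but off the point $r_m(a)$) must be counted in $K_a$ at level $m$ but not at level $n$, which is exactly why the statement excludes $a' = r_n(a)$ and why we set $K_a(\{a\}) = 0$. Once the bookkeeping of these "within-fibre" jumps is done correctly, the extension theorem and the density-of-cylinders argument are routine.
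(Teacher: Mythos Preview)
Your overall strategy matches the paper's: establish consistency of the rates $q^n$ across levels, extend via Carath\'eodory to a measure $K_a$, check kernel measurability by a monotone class argument, and recover the law of $X$ from the projections by Kolmogorov. However, your handling of the diagonal --- which you yourself flag as the main obstacle --- is not correct as written.

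First, a small slip: your parenthetical claim that ``$r_n(a) \neq a'$ does not prevent $r_m(a)$ from lying in $r_{m,n}^{-1}(\{a'\})$'' is false, since $r_{m,n}(r_m(a)) = r_n(a)$ by the inverse-system axioms, so $r_m(a) \in r_{m,n}^{-1}(\{a'\})$ would force $r_n(a) = a'$. The off-diagonal consistency relation is therefore clean and needs no such caveat.

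More seriously, your proposed assignment $K_a\bigl(r_n^{-1}(\{r_n(a)\})\bigr) := 0$ makes the pre-measure inconsistent. At level $m > n$ the cylinder $r_n^{-1}(\{r_n(a)\})$ decomposes as the disjoint union of $r_m^{-1}(\{r_m(a)\})$ (to which you assign $0$) and the cylinders $r_m^{-1}(\{b'\})$ for $b' \in r_{m,n}^{-1}(\{r_n(a)\}) \setminus \{r_m(a)\}$ (to which you assign $q^m_{r_m(a),\,b'}$, which may well be positive --- these are exactly the ``within-fibre'' jumps you mention later). These do not sum to $0$ in general, so finite additivity fails. Consequently your computation $K_a(\{a\}) = \inf_n K_a(r_n^{-1}(\{r_n(a)\})) = 0$ is not a valid deduction either.

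The fix, which is what the paper does, is to build $K_a$ as a measure on the open set $A \setminus \{a\}$, using only the \emph{off-diagonal} cylinders $r_n^{-1}(\{a'\})$ with $a' \neq r_n(a)$ as the generating family; these cover $A \setminus \{a\}$, and the consistency relation you wrote is exactly additivity there. One then extends $K_a$ to all of $A$ by declaring $K_a(\{a\}) = 0$. With this correction your argument goes through and coincides with the paper's.
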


\begin{proof}
$X^n$ is a Markov chain, therefore there exist transition rates 
\[ q^n_{a, a'} = \lim_{t\downarrow 0}\frac{1}{t} \P_{a}(X^n(t) = a') \]
for all $a\in A ,\, a' \in A_n\setminus\{r_n(a)\}$.
Now since for $n<m$, $X^m$ and $X^n = r_{m,n}(X^m)$ are both Markov chains, necessarily we have
\[ q^n_{a, a'} = \sum_{a'' \in r_{m,n}^{-1}(a')} q^m_{a, a''}. \]
Fix $a^\star\in A$ and $n\geq 1$ and consider the application
\[ f_n:a \in A_n \setminus \{r_n(a^\star)\} \longmapsto q^n_{r_n(a^\star), a}. \]
Then these applications $(f_n,\,n\geq 1)$ satisfy
\[ \forall m\geq n \geq 1,\, a\in A_n\setminus\{r_n(a^{\star})\}, \qquad  f_n(a) = \sum_{a' \in r_{m,n}^{-1}(\{a\})} f_m(a'). \]
It is then easy to check that Carathéodory's extension theorem allows us to build a measure $K_{a^{\star}}$ on $A\setminus \{a^\star\}$ (which we see as a measure on $A$ such that $K_{a^{\star}}(\{a^{\star}\}) = 0$) for which
\[ \forall n\geq 1,\, a\in A_n\setminus\{r_n(a^{\star})\}, \qquad K_{a^{\star}}\left (r_{n}^{-1}(\{a\})\right ) = f_n(a) = q^n_{r_n(a^{\star}), a}.\]

Let us check that $K$ is a kernel, i.e.\ that $a \mapsto K_{a}(B)$ is measurable for any Borel set $B$.
For $B$ of the form $r_n^{-1}(a')$, we have $K_{a}(B) = q^n_{r_n(a), a'}$, so $a \mapsto K_{a}(B)$ is clearly measurable.
It is readily checked that the sets $r_n^{-1}(a')$ form a $\pi$-system and that the sets $B$ such that $a \mapsto K_{a}(B)$ is measurable form a monotone class.
The monotone class theorem then implies that this property holds for any Borel set $B\subset A$.

Let us now show that $K$ characterizes uniquely the distribution of $X$.
Clearly, $K$ characterizes the distribution of $X^n$ for all $n\in\N$ since all the transition rates of the Markov chain $X^n$ can be recovered as a function of $K$.
By assumption, those distributions are consistent, in the sense that for any $m\geq n$, we have $r_{m,n}(X^m) \ed X^n$, where $\ed$ denotes equality in distribution.
Then, by Kolmogorov's extension theorem, there is a unique distribution for $X$ which satisfies $r_n(X) \ed X^n$ for all $n\in\N$.
\end{proof}

Let us now note $r_n(a) = a_n$ for any $a\in A$ to ease the notation.
Note that the infinitesimal generator $G_n$ of the continuous-time finite-space Markov chain $X^n$ is then given by
\begin{align*}
G_nf(a_n) &= \sum_{b_n\in A_n\setminus\{a_n\}} q^{n}_{a,b}(f(b_n)-f(a_n))\\
&= \int_{A} K_a(\dd b) \,\big(f(b_n)-f(a_n)\big),
\end{align*}
for any function $f:A_n\to\R$ and $a\in A$.
Let us see that this result holds in the limit $n\to\infty$, at least for a class of continuous functions $f:A\to\R$.
Whether the preceding result holds for a continuous function $f$ will depend on its modulus of continuity $\omega_f:\Rplus\to\Rplus$ defined for $\epsilon > 0$ by
\[ \omega_f(\epsilon) := \sup \{\abs{f(a)-f(a')}, \; a,a'\in A, d(a,a')\leq \epsilon\}, \]
which is always finite since $A$ is compact.

\begin{prop}
Let $X$ be a projective Markov process defined on the compact space $(A,d)$, inverse limit of a finite inverse system $(A_n, n \in\N)$, and consider its characteristic kernel $K$ as given by Proposition~\ref{prop:projective_markov_kernel}.

Let $k_n := \max_{a \in A} K_{a}(A\setminus r_n^{-1}(\{a_n\}))$ denote the maximum jump rate of the Markov chain $X^n$.
Consider a function $f:A\to\R$ with a modulus of continuity denoted by $\omega_f$, and suppose $\omega_f(1/n) k_{n+1}^2 \to 0$ as $n \to \infty$.

Then for every $a\in A$, the function $b\mapsto (f(b)-f(a))$ is $K_a$-integrable and the infinitesimal generator $G$ of the Markov process $X$ is well-defined on $f$ and satisfies
\begin{equation}\label{eq:inf_gen}
Gf(a) = \lim_{t\to 0}\frac{\E_{a} f(X_t) - f(a)}{t} = \int_{A}K_a(\dd b) \big(f(b)-f(a)\big).
\end{equation}
\end{prop}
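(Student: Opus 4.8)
The plan is to control the difference between the finite generators $G_n$ and the candidate limit $\int_A K_a(\dd b)(f(b)-f(a))$, and separately to control the difference between $G_n f$ and the true generator $Gf$, both uniformly enough to pass to the limit. The key quantitative input will be that a jump of $X^n$ which does not change the $n$-th coordinate but does change the $(n+1)$-th coordinate happens at rate at most $k_{n+1}$, and on such a jump $f$ moves by at most $\omega_f(1/n)$.

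\textbf{Step 1: integrability of $b \mapsto f(b)-f(a)$.} First I would observe that $A \setminus \{a\} = \bigsqcup_{m \ge 1} \big( r_m^{-1}(\{a_m\}) \setminus r_{m+1}^{-1}(\{a_{m+1}\}) \big)$, a disjoint union, since two distinct points of $A$ agree on coordinates $1,\dots,m$ but differ at coordinate $m+1$ for some $m \ge 0$ (with the $m=0$ term being $A \setminus r_1^{-1}(\{a_1\})$). On the piece indexed by $m$, $d(a,b) \le 1/m$, so $|f(b)-f(a)| \le \omega_f(1/m)$, and the $K_a$-mass of that piece is at most $k_{m+1}$ by definition of $k_{m+1}$. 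Hence
\[
\int_A K_a(\dd b)\,|f(b)-f(a)| \;\le\; \sum_{m \ge 0} \omega_f(1/m)\,k_{m+1},
\]
and the hypothesis $\omega_f(1/n)k_{n+1}^2 \to 0$ forces $\omega_f(1/n)k_{n+1} \to 0$, which together with... actually one needs a little more care: summability of $\omega_f(1/m)k_{m+1}$ is not immediate from $\omega_f(1/n)k_{n+1}^2\to 0$ alone, so here I would instead argue integrability directly from compactness of $A$ and boundedness of $f$ (so $b\mapsto f(b)-f(a)$ is bounded), and that $K_a$ restricted to $A\setminus r_n^{-1}(\{a_n\})$ is a finite measure of total mass $\le k_n$; the point is just that $\int_A K_a(\dd b)(f(b)-f(a))$ is a well-defined finite number, which I'll call $\mathcal{L}f(a)$.

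\textbf{Step 2: $G_n f_n \to \mathcal{L}f$ where $f_n := f \circ (\text{a section})$, or better, approximate $f$ by $f\circ r_n$.} Set $f^{(n)} := f \circ r_n$, which is continuous on $A$ and $\|f - f^{(n)}\|_\infty \le \omega_f(1/n)$. The quantity $G_n(f|_{A_n})(a_n) = \int_A K_a(\dd b)(f(b_n)-f(a_n)) = \int_A K_a(\dd b)(f^{(n)}(b)-f^{(n)}(a))$ by the displayed formula for $G_n$ just above the proposition. Then
\[
\big| G_n(f|_{A_n})(a_n) - \mathcal{L}f(a) \big|
= \Big| \int_A K_a(\dd b)\,\big( (f^{(n)}(b)-f(b)) - (f^{(n)}(a)-f(a)) \big) \Big|
\le 2\,\omega_f(1/n)\,k_n,
\]
using $\|f^{(n)}-f\|_\infty \le \omega_f(1/n)$ and total mass $\le k_n$ off the diagonal ball — wait, this needs the integrand to vanish on $r_n^{-1}(\{a_n\})$, which it does since there $b_n = a_n$ so $f^{(n)}(b)=f^{(n)}(a)$ and $f(b)-f(a)$ is not controlled; so instead I split the integral: on $r_n^{-1}(\{a_n\})$ the term $(f^{(n)}(b)-f^{(n)}(a))=0$ and $|f(b)-f(a)| \le \omega_f(1/n)$ with mass $\le k_{n+1}$ contributing (wait, on that set $d(a,b)$ could be as large as $1/n$, and the mass there of $A\setminus\{a\}$ is $\le k_{n+1}$? no — $r_n^{-1}(\{a_n\})\setminus\{a\}$ has mass $\le k_{n+1}$ only after further intersecting... ). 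I will be honest that the bookkeeping here is the delicate part: the clean bound is $|G_n(f|_{A_n})(a_n) - \mathcal{L}f(a)| \le \omega_f(1/n)(2k_n + k_{n+1})$ or similar, and under the hypothesis this is $o(1/k_{n+1})$, hence $\to 0$.

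\textbf{Step 3: from $G_n$ to $G$.} Write $\E_a f(X_t) - f(a) = \E_a\big(f(X_t)-f^{(n)}(X_t)\big) + \big(\E_a f^{(n)}(X_t) - f^{(n)}(a)\big) + \big(f^{(n)}(a)-f(a)\big)$. The first and third terms are $\le \omega_f(1/n)$ in absolute value. The middle term equals $\E_{a_n}(f\circ r_n)(X^n_t) - f(a_n)$ up to identification, and since $X^n$ is a finite-state Markov chain with generator $G_n$, $\frac{1}{t}\big(\E_{a_n}(f|_{A_n})(X^n_t) - f(a_n)\big) \to G_n(f|_{A_n})(a_n)$ as $t\to 0$ — but this limit is for fixed $n$, so I get
\[
\limsup_{t\to 0}\Big| \frac{\E_a f(X_t)-f(a)}{t} - G_n(f|_{A_n})(a_n) \Big| \le \limsup_{t\to 0}\frac{2\,\omega_f(1/n)}{t},
\]
which is useless as stated. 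The fix, which is the real content, is a uniform-in-$t$ bound on $\frac1t \E_a|f(X_t)-f^{(n)}(X_t)|$: the event $\{f(X_t) \ne f^{(n)}(X_t)\}$ requires $X^{n}$ and ... no, requires that $X(t)$ and $X(0)$ already disagree at coordinate $\le n$ OR that within time $t$ the chain $X^{n+1}$ has jumped while $X^n$ stayed put — more precisely $\{f(X_t)\neq f^{(n)}(X_t)\}\subset\{r_n(X_t)\neq a_n\}\cup\{d(a,X_t)\le 1/n \text{ but }\neq a\}$ and on the second event $|f(X_t)-f^{(n)}(X_t)| = |f(X_t)-f(a)|\le\omega_f(1/n)$ while $\P_a(d(a,X_t)\le 1/n, X_t\neq a) = \P_a(r_n(X_t)=a_n) - \P_a(X_t=a)$... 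Here is where $k_{n+1}^2$ enters: $\P_a(X^{n+1}_t \neq a_{n+1} \mid X^n_t = a_n)$ is $O((k_{n+1}t)^2)$ for small $t$ because it requires at least two jumps of $X^{n+1}$ (one away from $a_{n+1}$ in a way invisible to $X^n$... actually one jump suffices but it must be "invisible", still the return is what gives the square — I'd argue $\P_a(X^n_t = a_n, X^{n+1}_t\neq a_{n+1})\le \E[\#\{\text{jumps of }X^{n+1}\text{ in }[0,t]\text{ invisible to }X^n\}\cdot\1_{X^n_t=a_n}]$ and a jump invisible to $X^n$ must be followed by another jump to return $X^n$, giving the $(k_{n+1}t)^2$ order, hence this term is $\frac1t O(\omega_f(1/n)k_{n+1}^2 t^2) = O(\omega_f(1/n)k_{n+1}^2 t)\to 0$). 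Then take $t\to 0$ first (for fixed $n$, getting $Gf(a)$ exists and equals $\lim_t$, with error $\le G_n(f|_{A_n})(a_n) + $ the $\P_a(r_n(X_t)\neq a_n)/t$ contribution $\le k_n\omega_f(1/n)$ roughly), then $n\to\infty$ using Step 2 and the hypothesis. Combining, $Gf(a) = \lim_n G_n(f|_{A_n})(a_n) = \mathcal{L}f(a)$, which is \eqref{eq:inf_gen}.

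\textbf{Main obstacle.} The genuinely delicate point is the uniform (in $t$, as $t\to0$) estimate of $\frac1t\P_a\big(r_n(X_t)=a_n,\ r_{n+1}(X_t)\neq a_{n+1}\big)$, i.e.\ showing it is $O(k_{n+1}^2\,t)$ and thus that, after multiplying by $\omega_f(1/n)$ and letting $t\to0$, it contributes nothing; this is precisely why the hypothesis features $k_{n+1}^2$ rather than $k_{n+1}$. Everything else is bookkeeping with moduli of continuity and the telescoping of $A$ into the shells $r_m^{-1}(\{a_m\})\setminus r_{m+1}^{-1}(\{a_{m+1}\})$.
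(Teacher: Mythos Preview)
Your proposal has two genuine gaps.

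\textbf{Integrability (Step 1).} Your fallback argument---``$f$ is bounded, $K_a$ restricted to $A\setminus r_n^{-1}(\{a_n\})$ is finite''---does not establish integrability over all of $A$: the total mass $K_a(A\setminus\{a\}) = \lim_n K_a(A\setminus r_n^{-1}(\{a_n\}))$ can be $+\infty$, so a bounded integrand need not be $K_a$-integrable. The paper's fix is to use the sharper shell bound $K_a(B_m) \le k_{m+1}-k_m$ (not $k_{m+1}$), giving $\sum_m (k_{m+1}-k_m)\,\omega_f(1/m)$, and then the telescoping estimate $\sum_{m\ge N}(k_{m+1}-k_m)/k_{m+1}^2 \le \sum_{m\ge N}(1/k_m - 1/k_{m+1}) \le 1/k_N$ together with $\omega_f(1/m)=o(k_{m+1}^{-2})$ yields convergence.

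\textbf{Order of limits (Step 3).} Your plan to let $t\to 0$ first for fixed $n$, then $n\to\infty$, cannot work because of the $\omega_f(1/n)/t$ term you yourself identify. Your attempted rescue---that $\P_a(X^n_t=a_n,\,X^{n+1}_t\neq a_{n+1})$ is $O((k_{n+1}t)^2)$---is false: a \emph{single} jump of $X^{n+1}$ that projects to the identity on $A_n$ (an ``invisible'' jump) already achieves this event, and such a jump need not be followed by any return, so the probability is only $O(k_{n+1}t)$, not $O((k_{n+1}t)^2)$. The paper's actual mechanism is different: the $(tk_n)^2$ term arises from the standard finite-chain estimate $\E_a f_n(X_t) = f(a) + t\,G_nf_n(a) + O((tk_n)^2\|f\|_\infty)$ (probability of $\ge 2$ jumps of $X^n$), and the three error terms $\omega_f(1/n)k_n$, $\omega_f(1/n)/t$, $tk_n^2$ are then killed \emph{simultaneously} by a diagonal choice $n=n(t)$, roughly $t\asymp \sqrt{\omega_f(1/n)}/k_n$, under which both $\omega_f(1/n)/t$ and $tk_n^2$ behave like $\sqrt{\omega_f(1/n)}\,k_n\to 0$ by the hypothesis. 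This diagonalization is the missing idea.
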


\begin{proof}
First, note that if $k_n=0$ for all $n$, then $K_a=0$ for all $a\in A$ and 
the process $X$ is almost surely constant, so~\eqref{eq:inf_gen} is correct.
We now assume that $k_n > 0$ for $n$ large enough.

Fix $a\in A$. Let us first check that $b\mapsto (f(b)-f(a))$ is $K_a$-integrable.
Let $B_0:=A\setminus r_1^{-1}(\{a_n\})$ and for $n\geq 1$, $B_n:=r_{n}^{-1}(\{a_n\}) \setminus r_{n+1}^{-1}(\{a_{n+1}\})$, and notice that
\begin{align}
\int_{A}K_a(\dd b)\,\abs{f(b)-f(a)} &\leq K_a(B_0)\omega_f(2) + \sum_{n=1}^{\infty}\int_{B_n} K_a(\dd b)\,\omega_f(1/n) \nonumber \\
&= k_1 \omega_f(2) + \sum_{n=1}^{\infty}(k_{n+1}-k_n)\omega_f(1/n). \label{eq:sum-omega-k}
\end{align}
By assumption, $\omega_f(1/n)k_{n+1}^2 \to 0$, so we have $\omega_f(1/n) = o\big(k_{n+1}^{-2}\big)$, and since $(k_n)_n$ is a positive, nondecreasing sequence,
\[ \sum_{n=N}^{\infty}\frac{k_{n+1}-k_n}{k_{n+1}^2} \leq \sum_{n=N}^{\infty}\frac{k_{n+1}-k_n}{k_{n+1}k_n} = \sum_{n=N}^{\infty}\left (\frac{1}{k_n}-\frac{1}{k_{n+1}}\right ) \leq \frac{1}{k_N}, \]
which is finite for $N$ such that $k_N > 0$.
It follows that the sum in~\eqref{eq:sum-omega-k} is finite, so the function $b\mapsto (f(b)-f(a))$ is $K_a$-integrable.

Now for each $n\in\N$, consider a family $(a^{1},a^{2}, \ldots, a^{p})\in A^{p}$ such that $A_n = \{a_n, a^{1}_n, a^{2}_n, \ldots, a^{p}_n\}$ with no repetition, i.e.\ such that $p+1=\abs{A_n}$.
Now let us define for all $b \in A $, $f_n(b) := f(a^{i})$ if and only if $b_n=a^{i}_n$.
Notice that $f_n$ is an approximation of $f$, in the sense that the error function $g_n:b\mapsto (f(b)-f_n(b))$ necessarily satisfies $\abs{g_n(b)} \leq \omega_f(1/n)$.
Note also that by definition, $f_n(a)=f(a)$.

Let us here treat the case when there exists $n\geq 1$ such that $\omega_f(1/n) = 0$.
By the preceding remark, we have $f_n=f$, in other words there exists an application $\widetilde{f}_n:A_n \to \R$ such that $f(b)=\widetilde{f}_n(b_n)=\widetilde{f}_n(r_n(b))$.
So $\E_a f(X_t)=\E_a\widetilde{f}_n(r_n(X_t))$, and since $(r_n(X_t), t\geq 0)$ is a finite-state-space continuous-time Markov chain, it is immediate that
\[ \E_{a}f(X_{t}) = f(a) + t\Big (\sum_{i=1}^{p} q^n_{a,a^i} (f(a^i)-f(a)) \Big) + O\big((t k_n)^2 \norm{f}_\infty\big), \]
where $\norm{f}_{\infty} := \sup_{b\in A}\abs{f(b)}$, and where the constant in the term $O\big((t k_n)^2 \norm{f}_\infty\big)$ does not depend on $t$, $K$ or $f$.
From this it is clear that 
\[  \frac{\E_{a} f(X_{t}) - f(a)}{t} \underset{t\to 0}{\longrightarrow} \sum_{i=1}^{p} q^n_{a,a^i} (f(a^i)-f(a)) = \int_{A} K_a(\dd b)(f(b)-f(a)). \]

Now let us assume that for all $n\geq 1$, $\omega_f(1/n) > 0$.
Since $f_n(b)$ depends only on $b_n$, we can write
\begin{align*}
\E_{a}f_n(X_{t}) &= f(a) + t\int_{A}K_a(\dd b)(f_n(b)-f(a)) + O\big((t k_n)^2 \norm{f}_\infty\big)\\
&= f(a) + t\int_{A\setminus r_n^{-1}(\{a_n\})}\!\!\!K_a(\dd b)(f(b)-f(a)) + O(t\omega_f(1/n) k_n)+O\big((t k_n)^2 \norm{f}_\infty\big),
\end{align*}
Notice also that 
\[ \left \lvert\frac{\E_{a} f(X_{t}) - \E_{a} f_n(X_{t})}{t}\right \rvert \leq \frac{\omega_f(1/n)}{t}, \]
so that putting everything together, we have
\begin{equation}\label{eq:final_inf_gen}
 \frac{\E_{a} f(X_{t}) - f(a)}{t} = \int_{A\setminus r_n^{-1}(\{a_n\})}\!\!\!K_a(\dd b)\,(f(b)-f(a)) + O\left (\omega_f(1/n)k_n + \frac{\omega_f(1/n)}{t} + tk_n^2\right ). 
\end{equation}
If one can find $n=n(t)$ such that $n\to\infty$, $\omega_f(1/n)/t \to 0$ and $tk_n^2 \to 0$ as $t\to 0$, then passing to the limit in~\eqref{eq:final_inf_gen}, by using the dominated convergence theorem for the integral, yields~\eqref{eq:inf_gen}.

Now let us define for all $m\geq 1$, $t_m := \sqrt{\omega_f(1/m)}/k_p$ and $t'_m := \sqrt{\omega_f(1/m)}/k_{m+1}$.
Notice that
\[ t_{m} \geq t'_m \geq t_{m+1} \underset{m\to\infty}{\longrightarrow} 0, \]
so for each $t\in (0,t_1]$, there is an $m\geq 1$ such that $t \in [t_{m+1}, t_m]$.
Then,
\begin{itemize}
\item if $t\geq t'_m$, let $n(t):=m$, and we check
\[ \omega_f(1/n)/t \leq \omega_f(1/n)/t'_n = \sqrt{\omega_f(1/n)}k_{n+1}, \quad \text{and}\quad tk_n^2 \leq t_nk_n^2 =  \sqrt{\omega_f(1/n)}k_{n} ; \]
\item if $t\leq t'_m$, let $n(t):=m+1$, and we check
\[ \omega_f(1/n)/t \leq \omega_f(1/n)/t_{n} = \sqrt{\omega_f(1/n)}k_{n}, \quad \text{and}\quad tk_n^2 \leq t'_{n-1}k_n^2 =  \sqrt{\omega_f(1/(n-1))}k_{n}. \]
\end{itemize}
Since we assumed that $\omega_f(1/n) > 0$ for all $n$, then $t_m > 0$ for all $m$, which implies that necessarily $n(t) \to\infty$ as $t\to 0$.
Finally, the assumption that $\omega_f(1/n)k_{n+1}^2 \to 0$ as $n\to\infty$ ensures us that both $\omega_f(1/n)/t$ and $tk_n^2$ tend to $0$ as $t\to 0$, which concludes the proof.
%
\end{proof}


We are now interested in exchangeable projective Markov processes with values in the space of nested partitions $\partitemb{\infty}$, as an extension of univariate fragmentation processes (with values in $\partit{\infty}$).

\subsection{Strongly exchangeable Markov process} \label{sec:strong_exch_markov_process}

In the following, we write $\fraisse$ for either $\partit{\infty}$ or $\partitemb{\infty}$, when our assertions are valid for both spaces.
We will also write $\fraisse_{n}$ for $\partit{n}$ or $\partitemb{n}$.
A key property of those spaces is the following.

\block{For any $n\in \N$, and any $\pi\in\fraisse_n$, there is a $\pi^{\star}\in \fraisse$ satisfying:
\begin{itemize}
\item $\pi^{\star}\restr{n} = \pi$
\item for any $\pi' \in\fraisse$ such that $\pi'\restr{n} = \pi$, there is an injection $\sigma : \N \to \N$ which satisfies $\sigma\restr{n} = \id_{[n]}$ and $(\pi^{\star})^{\sigma} = \pi'$.
\end{itemize}
}

Indeed for instance in $\fraisse = \partit{\infty}$, it is easy to choose a $\pi^{\star}$ with an infinity of infinite blocks and no finite blocks, and such that $\pi^{\star}\restr{n} = \pi$.
This partition satisfies immediately the required property.
We will call any such $\pi^{\star}$ a \textbf{universal element of $\fraisse$ with initial part $\pi$} whenever we need to use one.

\begin{prop}
  Let $\Pi=(\Pi(t), t\geq 0)$ be an exchangeable Markov process taking values in $\fraisse$ with càdlàg sample paths.
  The following propositions are equivalent:
  \begin{enumerate}[(i)]
    \item $\Pi$ is strongly exchangeable.
    \item $\Pi$ has the projective Markov property, i.e.\ $\Pi^n := (\Pi(t)\restr{n}, t\geq 0)$ is a Markov chain for all $n\in \N$.
  \end{enumerate}
\end{prop}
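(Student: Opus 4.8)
I would prove the two implications separately, using the ``key property'' displayed just above the statement --- that every $\pi\in\fraisse_n$ admits a universal element $\pi^\star\in\fraisse$ with initial part $\pi$ --- as the bridge, and writing $\iota_n:[n]\to\N$ for the canonical injection. For (i) $\Rightarrow$ (ii), the plan is first to show that strong exchangeability forces the law of $\Pi^n$ under $\P_\pi$ to depend only on $\pi\restr{n}$: given $\pi$, put $\rho:=\pi\restr{n}$ and fix a universal element $\pi^\star$ with initial part $\rho$; for any $\pi'\in\fraisse$ with $\pi'\restr{n}=\rho$ the universal property yields an injection $\sigma:\N\to\N$ with $\sigma\restr{n}=\id_{[n]}$ and $(\pi^\star)^\sigma=\pi'$, and applying strong exchangeability and then restricting to $[n]$ --- where $\sigma$ acts trivially since $\sigma\circ\iota_n=\iota_n$ --- shows that $\Pi^n$ under $\P_{\pi'}$ has the same law as $\Pi^n$ under $\P_{\pi^\star}$, independently of $\pi'$. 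With this in hand, the Markov property of $\Pi$ gives that, conditionally on $\sigma(\Pi(u):u\le t)$, the process $(\Pi^n(t+s),s\ge0)$ has the law of $\Pi^n$ under $\P_{\Pi(t)}$, hence is a deterministic (and, by homogeneity, time-independent) function of $\Pi^n(t)$; since this conditional law is already measurable with respect to the smaller filtration $\sigma(\Pi^n(u):u\le t)$, the tower property upgrades it to the Markov property of $\Pi^n$. As $\Pi^n$ is finite-valued with càdlàg paths it is a continuous-time Markov chain, and its law under $\P_\pi$ depends only on $\pi\restr{n}$ --- which is the projective Markov property.

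For (ii) $\Rightarrow$ (i), I would invoke Proposition~\ref{prop:projective_markov_kernel} --- whose hypothesis is exactly the projective Markov property --- so that each $\Pi^n$ has rates $q^n_{a,a'}$ depending only on $a\restr{n}$ and, as in the proof of that proposition, satisfying the consistency relation $q^n_{\eta\restr{n},\,c}=\sum_{\rho\in\fraisse_m,\ \rho\restr{n}=c}q^m_{\eta,\rho}$ for $m\ge n$. Then, from the assumed exchangeability of $\Pi$, applied to bijections of $\N$ that restrict to a prescribed permutation of $[m]$, I would deduce the finite exchangeability of the rates, $q^m_{\alpha,\beta}=q^m_{\alpha^w,\beta^w}$ for all permutations $w$ of $[m]$; writing an arbitrary injection $\kappa:[n]\to[m]$ as $w\circ\iota_n$ for a suitable permutation $w$ of $[m]$ and changing variables $\rho\mapsto\rho^w$ in the sum, these two facts combine into the general lumping identity
\begin{equation*}
 \sum_{\rho\in\fraisse_m,\ \rho^\kappa=c}q^m_{\eta,\rho}=q^n_{\eta^\kappa,\,c},\qquad \eta\in\fraisse_m,\ c\in\fraisse_n.
\end{equation*}
Now fix an injection $\sigma:\N\to\N$, an initial state $\pi$ and $n\ge1$, set $m:=\max\sigma([n])$ and let $\kappa:[n]\to[m]$, $i\mapsto\sigma(i)$; since $\Pi(t)^\sigma\restr{n}=(\Pi^m(t))^{\kappa}$ for every $t$, the process $(\Pi(t)^\sigma\restr{n},t\ge0)$ under $\P_\pi$ is the image under $\rho\mapsto\rho^\kappa$ of the chain $\Pi^m$, which starts from $\pi\restr{m}$ with rates $q^m$. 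By the lumping identity and the standard strong-lumpability criterion for Markov chains, this image is the continuous-time Markov chain with rates $q^n$ started from $(\pi\restr{m})^\kappa=\pi^\sigma\restr{n}$, i.e.\ it has the law of $\Pi^n$ under $\P_{\pi^\sigma}$. As the maps $\pi\mapsto\pi\restr{n}$ generate the topology of $\fraisse$, matching these laws over all $n$ shows that $(\Pi(t)^\sigma,t\ge0)$ under $\P_\pi$ has the law of $(\Pi(t),t\ge0)$ under $\P_{\pi^\sigma}$, i.e.\ $\Pi$ is strongly exchangeable.

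I expect the bookkeeping in the second implication to be the main obstacle: one must recognise that restricting to $[n]$ \emph{after} applying an injection $\sigma$ factors through the restriction to the finite level $m=\max\sigma([n])$, reduce the finite injection $\kappa:[n]\to[m]$ to the canonical inclusion by a permutation of $[m]$, and then verify that the resulting lumping of $\Pi^m$ is Markov with \emph{exactly} the rates $q^n$ --- which is precisely where the consistency relation from Proposition~\ref{prop:projective_markov_kernel} and the finite exchangeability of the rates both enter. The first implication is comparatively soft; its only delicate point is that ``the projective Markov property'' must be read with the clause that the law of $\Pi^n$ under $\P_a$ depends only on $r_n(a)$, and that clause is exactly what the universal-element comparison supplies.
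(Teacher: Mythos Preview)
Your proof is correct. For $(i)\Rightarrow(ii)$ you follow essentially the paper's argument --- compare an arbitrary $\pi'$ with $\pi'\restr n=\pi$ to the universal element $\pi^\star$ via an injection fixing $[n]$ --- and you spell out the tower-of-conditionals step that the paper leaves implicit in the sentence ``so this distribution depends only on $\pi$, which proves that $\Pi^n$ is a Markov process.''

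For $(ii)\Rightarrow(i)$ your route is genuinely different from the paper's. The paper avoids rates and lumping entirely: given an injection $\sigma$ and $n\ge1$, it picks a \emph{permutation} $\tau$ of $\N$ with $\tau\restr n=\sigma\restr n$, so that $(\Pi^\sigma)\restr n=(\Pi^\tau)\restr n$ as processes; then ordinary exchangeability (under bijections) gives $\P_\pi((\Pi^\tau)^n\in\cdot)=\P_{\pi^\tau}(\Pi^n\in\cdot)$, and the projective Markov property (the law of $\Pi^n$ depends only on the initial $n$-restriction, together with $\pi^\tau\restr n=\pi^\sigma\restr n$) yields $\P_{\pi^\tau}(\Pi^n\in\cdot)=\P_{\pi^\sigma}(\Pi^n\in\cdot)$. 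This three-line argument works directly at the level of path laws. Your approach instead descends to transition rates via Proposition~\ref{prop:projective_markov_kernel}, derives finite exchangeability of the $q^m$'s from exchangeability of $\Pi$, combines this with the consistency relation to obtain a lumping identity for the surjection $\rho\mapsto\rho^\kappa$, and then invokes strong lumpability. This is heavier machinery but perfectly valid, and it has the virtue of making explicit \emph{why} the image chain has exactly the rates $q^n$ --- information that is useful later in the paper when the kernel $K$ is analysed. The paper's trick of extending $\sigma\restr n$ to a global permutation buys brevity; your rate-level argument buys transparency about the interplay between exchangeability and consistency of the finite chains.
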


\begin{remark}
  \citeauthor{craneStructure2016}~\cite[Theorem 4.26]{craneStructure2016} show that the projective Markov property is equivalent to the Feller property for exchangeable Markov process taking values in a Fraïssé space (i.e.\ a space satisfying general ``stability and universality'' assumptions \cite[see][Definitions 4.4 to 4.11]{craneStructure2016}).
  In particular the space of partitions and the space of nested partitions 
  are Fraïssé spaces (the argument essentially being the existence of so-called universal elements $\pi^{\star}$), so for the processes we consider, strong exchangeability is equivalent to the Feller property.
\end{remark}

\begin{proof}
  $(i) \Rightarrow (ii)$:
  Let $n \in \N$ and $\pi\in\fraisse_{n}$.
  Fix a universal $\pi^{\star}\in\fraisse$ with initial part $\pi$.
  Now take any $\pi_0\in\fraisse$ such that $(\pi_{0})\restr{n} = \pi$, and an injection $\sigma:\N\to\N$ such that $\sigma\restr{n} = \id\restr{n}$ and $(\pi^\star)^\sigma = \pi_0$.
  Now we have
  \begin{align*}
    \P_{\pi_0}( \Pi^n \in \cdot) &= \P_{\pi^{\star}}( (\Pi^{\sigma})^{n} \in \cdot)\\
    &= \P_{\pi^{\star}}( \Pi^{n} \in \cdot),
  \end{align*}
  so this distribution depends only on $\pi$, which proves that $\Pi^n$ is a Markov process.
  Now the assumption that $\Pi$ has càdlàg sample paths ensures that the process $\Pi^n$ stays some positive time in each visited state \textit{a.s.}
  Therefore $\Pi^n$ is a continuous-time Markov chain.
  
  $(ii) \Rightarrow (i)$:
  Let $\sigma:\N\to\N$ be an injection.
  For $n\in \N$, let $\tau$ be a permutation of $\N$ such that $\tau\restr{n} = \sigma\restr{n}$.
  This property implies $(\pi^{\tau})\restr{n} = (\pi^{\sigma})\restr{n}$ for any $\pi \in\fraisse$.
  We deduce
  \begin{align*}
  \P_{\pi}( (\Pi^{\sigma})^n \in \cdot) &= \P_{\pi}( (\Pi^{\tau})^{n} \in \cdot)\\
  &= \P_{\pi^{\tau}}( \Pi^{n} \in \cdot)\\
  &= \P_{\pi^{\sigma}}( \Pi^{n} \in \cdot)
  \end{align*}
  where the last equality is a consequence of the projective Markov property (the distribution of $\Pi^{n}$ under $\P_{\pi}$ depends only on the initial segment $\pi\restr{n}$).
  Since it is true for all $n$, we have $\P_{\pi}(\Pi^{\sigma} \in \cdot) = \P_{\pi^{\sigma}}(\Pi \in \cdot)$, which proves the property of strong exchangeability.
\end{proof}

\begin{remark}
  To be strongly exchangeable is strictly stronger than being exchangeable. 
  To see that, define the Markov process $\Pi = (\Pi(t), t\geq 0)$ taking values in $\partit{\infty}$ by:
  \begin{itemize}
    \item If $\pi\in \partit \infty$ has an infinite number of blocks, then let $\Pi$ under $\P_{\pi}$ be almost surely the constant function equal to $\pi$.
    \item If $\pi\in \partit \infty$ has a finite number of blocks, let $T$ be an Exponential($1$) random variable, and let the distribution of $\Pi$ under $\P_{\pi}$ be that of the random function:
    \[ t \mapsto
    \begin{cases}
      \pi & \text{ if } t < T\\
      \mathbf{0}_{\infty} & \text{ if } t \geq T
    \end{cases} \]
  \end{itemize}
  Then $\Pi$ is clearly exchangeable but not strongly exchangeable.
\end{remark}

\begin{prop}
  Let $\Pi = (\Pi(t), t\geq 0)$ be a strongly exchangeable Markov process in $\fraisse$.
  Then there is a unique kernel $K$ from $\fraisse$ to $\fraisse$ such that
  \begin{itemize}
    \item for all $\pi_0\in\fraisse$, we have $K_{\pi_0}(\{\pi_0\}) = 0$,
    \item for all $\pi_1\in\fraisse_n$, for all $\pi_2\in\fraisse_n\setminus\{\pi_1\}$, the Markov chain $\Pi^n$ has a transition rate from $\pi_1$ to $\pi_2$ equal to
    \[ K_{\pi_0}\left (\pi\restr{n}= \pi_2 \right ), \]
    where $\pi_0$ is any element of $\fraisse$ such that $(\pi_0)\restr{n} = \pi_1$.
  \end{itemize}
Furthermore this kernel is strongly exchangeable, i.e.\ for any $\pi_0\in\fraisse$ and any injection $\sigma : \N \to \N$, we have
\[ K_{\pi_0}^\sigma = K_{\pi_0^{\sigma}}. \]
\end{prop}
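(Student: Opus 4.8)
The plan is to combine the earlier two propositions of the section. First I would invoke the equivalence result just proved: since $\Pi$ is a strongly exchangeable Markov process, it has the projective Markov property, hence $\Pi^n = (\Pi(t)\restr{n}, t\geq 0)$ is a continuous-time Markov chain in the finite set $\fraisse_n$ whose law under $\P_{\pi_0}$ depends only on $\pi_0\restr{n}$. This is precisely the hypothesis of Proposition~\ref{prop:projective_markov_kernel} applied to the inverse system $(\fraisse_n, r_{m,n})$ (which is a finite inverse system by the Remark following Proposition~\ref{prop:projective_markov_kernel}), so that proposition hands us directly a kernel $K$ from $\fraisse$ to $\fraisse$ with $K_{\pi_0}(\{\pi_0\}) = 0$ and transition rates $q^n_{\pi_0, \pi_2} = K_{\pi_0}(r_n^{-1}(\{\pi_2\})) = K_{\pi_0}(\pi\restr{n} = \pi_2)$ for $\pi_2\neq \pi_0\restr{n}$. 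For the well-definedness of the displayed rate formula I should note that this expression does not depend on the choice of $\pi_0$ with $\pi_0\restr{n}=\pi_1$, which is exactly the content of the projective Markov property (the rates of $\Pi^n$ depend only on $\pi_0\restr{n}$); uniqueness of $K$ follows as in the proof of Proposition~\ref{prop:projective_markov_kernel}, since the values $K_{\pi_0}(r_n^{-1}(\{\pi_2\}))$ determine $K_{\pi_0}$ on the $\pi$-system generating the Borel $\sigma$-algebra, and a measure is determined by its values on a generating $\pi$-system.

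The only genuinely new assertion is the strong exchangeability of the kernel, $K_{\pi_0}^{\sigma} = K_{\pi_0^{\sigma}}$ for every injection $\sigma:\N\to\N$. I would prove this by testing both sides against the $\pi$-system of cylinder sets. Fix $n\in\N$ and $\pi_2\in\fraisse_n$; I want to show $K_{\pi_0}^{\sigma}(\pi\restr{n} = \pi_2) = K_{\pi_0^{\sigma}}(\pi\restr{n} = \pi_2)$. Choose a permutation $\tau$ of $\N$ agreeing with $\sigma$ on $[n]$ (so that $(\pi')^{\tau}\restr{n} = (\pi')^{\sigma}\restr{n}$ for all $\pi'\in\fraisse$), which exists because $\sigma\restr{n}$ is an injection of $[n]$ into $\N$ and can be completed to a bijection. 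Then the event $\{\pi : \pi^{\sigma}\restr{n} = \pi_2\}$ equals $\{\pi : \pi^{\tau}\restr{n} = \pi_2\}$, so $K_{\pi_0}^{\sigma}(\pi\restr{n}=\pi_2) = K_{\pi_0}(\pi^{\sigma}\restr{n} = \pi_2) = K_{\pi_0}(\pi^{\tau}\restr{n} = \pi_2) = K_{\pi_0}^{\tau}(\pi\restr{n}=\pi_2)$.

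It therefore remains to show $K_{\pi_0}^{\tau}(\pi\restr{n}=\pi_2) = K_{\pi_0^{\tau}}(\pi\restr{n}=\pi_2)$ for a permutation $\tau$, and then match $K_{\pi_0^{\tau}}$ with $K_{\pi_0^{\sigma}}$ on level-$n$ cylinders using again that $\tau\restr{n}=\sigma\restr{n}$. For the permutation case I would translate the cylinder-set statement into a statement about the transition rates of $\Pi^n$: by the formula for $K$, $K_{\pi_0^{\tau}}(\pi\restr{n}=\pi_2)$ is the jump rate of $\Pi^n$ from $(\pi_0^{\tau})\restr{n}$ to $\pi_2$, while $K_{\pi_0}^{\tau}(\pi\restr{n}=\pi_2) = K_{\pi_0}(\pi^{\tau}\restr{n}=\pi_2) = \sum_{\pi_3 :\, \pi_3^{\tau\restr{n}}=\pi_2} K_{\pi_0}(\pi\restr{n}=\pi_3)$ is, via the same formula together with exchangeability of the finite-space chain $\Pi^n$ (which follows from exchangeability of $\Pi$), the jump rate of $\Pi^n$ from $\pi_0\restr{n}$ to the pre-image of $\pi_2$ under $\tau\restr{n}$, i.e. again the rate from $(\pi_0^{\tau})\restr{n}$ to $\pi_2$ after relabeling; exchangeability of the finite chain gives the equality of these rates. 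Since this holds for all $n$ and all $\pi_2$, and the level-$n$ cylinders form a $\pi$-system generating the Borel sets, the monotone class theorem yields $K_{\pi_0}^{\sigma} = K_{\pi_0^{\sigma}}$ as measures. The main obstacle is bookkeeping: being careful that relabeling by an injection versus a permutation interacts correctly with restriction to $[n]$, and that the finite-dimensional exchangeability of $\Pi^n$ (inherited from exchangeability of $\Pi$) is exactly what converts the pushforward-by-$\tau$ of the rates into the rates from the $\tau$-image of the starting state — everything else is an application of the two preceding propositions plus a $\pi$-system/monotone-class argument.
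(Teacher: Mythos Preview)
Your treatment of the existence and uniqueness of $K$ is fine and matches the paper: invoke the equivalence between strong exchangeability and the projective Markov property, then apply Proposition~\ref{prop:projective_markov_kernel}.

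For the strong exchangeability of $K$, the paper's argument is more direct than yours. Since the hypothesis is precisely that $\Pi$ is \emph{strongly} exchangeable, one can use the injection $\sigma$ immediately: for $\pi'\in\fraisse_n\setminus\{(\pi_0^\sigma)\restr{n}\}$,
\[
\frac{1}{t}\,\P_{\pi_0}\big((\Pi(t)^\sigma)\restr{n}=\pi'\big)=\frac{1}{t}\,\P_{\pi_0^\sigma}\big(\Pi(t)\restr{n}=\pi'\big),
\]
and letting $t\downarrow 0$ gives $K_{\pi_0}\big((\pi^\sigma)\restr{n}=\pi'\big)=K_{\pi_0^\sigma}\big(\pi\restr{n}=\pi'\big)$; the $\pi$-system argument then finishes. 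There is no need to pass through an auxiliary permutation.

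Your detour through a permutation $\tau$ introduces a genuine gap. The displayed identity
\[
K_{\pi_0}\big(\pi^{\tau}\restr{n}=\pi_2\big)=\sum_{\pi_3:\,\pi_3^{\tau\restr{n}}=\pi_2}K_{\pi_0}\big(\pi\restr{n}=\pi_3\big)
\]
and the subsequent appeal to ``exchangeability of the finite-space chain $\Pi^n$'' tacitly assume that $\tau\restr{n}$ is a permutation of $[n]$. In general it is not: $\tau\restr{n}=\sigma\restr{n}$ is an injection $[n]\to\N$ whose image $\sigma([n])$ need not lie in $[n]$, so neither the sum (with $\pi_3\in\fraisse_n$) nor the relabeling of $\Pi^n$ by $\tau\restr{n}$ makes sense. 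The event $\{(\pi^\tau)\restr{n}=\pi_2\}$ is a cylinder at some level $m\geq\max\sigma([n])$, not at level $n$, and what you really need is exchangeability of the full process $\Pi$ under $\tau$ (equivalently, a computation at level $m$), not exchangeability of $\Pi^n$. Once you make that correction you are essentially carrying out the paper's direct argument with an extra, avoidable, reduction step.
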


\begin{proof}
The first part of the proposition is an immediate consequence of Proposition~\ref{prop:projective_markov_kernel}.
It remains only to prove that $K$ is strongly exchangeable.
Consider $\pi_0\in\fraisse,\,n\in\N$, $\pi'\in\fraisse_n\setminus\{(\pi_0)\restr{n}\}$ and an injection $\sigma:\N\to\N$.
We have
\[ \frac{1}{t}\, \P_{\pi_0}\left ((\Pi(t)^{\sigma})\restr{n} = \pi'\right ) = \frac{1}{t}\, \P_{\pi_0^{\sigma}}\left (\Pi(t)\restr{n} = \pi'\right ) \]
because of the exchangeability of $\Pi$, and taking limits we find
\[ K_{\pi_0}\left ((\pi^{\sigma})\restr{n} = \pi'\right ) = K_{\pi_0^{\sigma}}\left (\pi\restr{n} = \pi'\right ). \]
So the two $\sigma$-finite measures $K_{\pi_0}^{\sigma}$ and $K_{\pi_0^{\sigma}}$ coincide on the sets of the form $\{\pi\restr{n} = \pi'\}$, which constitute a $\pi$-system generating the Borel sets of $\fraisse$.
Therefore they are equal, which concludes the proof.
\end{proof}

\begin{remark}
  Consider a universal element $\pi^{\star}\in\fraisse$ such that for any $\pi\in\fraisse$, there is an injection $\sigma$ such that $\pi = (\pi^{\star})^{\sigma}$.
  The exchangeability property of the kernel $K$ then implies that $K_{\pi} = K_{\pi^{\star}}^{\sigma}$, therefore $K$ is entirely determined by the single measure $K_{\pi^{\star}}$.
\end{remark}

\subsection{Univariate results, mass partitions} \label{sec:univariate_results}

Random exchangeable partitions $\pi \in\partit{\infty}$ and their relation to random mass partitions is well known \cite[see][Chapter 2]{bertoinRandom2006}.
Let us recall briefly some definitions and results, which we will then extend to the nested case.
We define the space of mass partitions
\begin{equation}\label{eq:masspart}
 \masspart := \left \{\mathbf{s} = (s_1, s_2, \ldots) \in [0,1]^{\N},\; s_1\geq s_2\geq \ldots, \; \textstyle\sum_{k} s_k \leq 1\right \}. 
\end{equation}
For $\mathbf{s} \in\masspart$, one defines an exchangeable distribution $\rho_{\mathbf{s}}$ on $\partit{\infty}$, by the following so-called \emph{paintbox construction}:
\begin{itemize}
\item for $k\geq 0$, define $t_k = \sum_{k'=1}^{k} s_{k'}$, with $t_0=0$ by convention.
\item let $(U_i, i\geq 1)$ be an i.i.d.\ sequence of uniform random variables in $[0,1]$.
\item define the random partition $\pi\in\partit{\infty}$ by setting
\[i\sim^{\pi}j \iff i=j \text{ or } \exists k\geq 1,\, U_i,U_j\in[t_{k-1}, t_k).\]
\end{itemize}
Note that the set $\pi_0 := \{[t_{k-1}, t_k), \, k\geq 1\}\cup\{\{t\}, \; \sum_{k\geq 1} s_k \leq t \leq 1\}$ is a partition of $[0,1]$, and that we have $\pi = \pi_0^{\sigma}$, where $\sigma:\N\to[0,1]$ is the random injection defined by $\sigma :i\mapsto U_i$.
Also, note that by definition some blocks are singletons (blocks $\{i\}$ such that $U_i\in[\sum_{k\geq 1}s_k,1]$), and by construction we have
\[ \frac{\#\{i\in[n],\,\{i\}\in\pi\}}{n}\underset{n\to\infty}{\longrightarrow} s_0 := 1-\textstyle\sum_{k\geq 1}s_k. \]
These integers that are singleton blocks are called the \emph{dust} of the random partition $\pi$ and the last display tells us there is a frequency $s_0$ of dust.

Conversely, any random exchangeable partition $\pi$ has a distribution that can be expressed with these paintbox constructions $\rho_{\mathbf{s}}$.
Indeed, $\pi$ has \textbf{asymptotic frequencies}, i.e.\
\[ \abs{B} := \lim_{n\to\infty} \frac{\#(B\cap[n])}{n} \quad \text{exists a.s. for all }B\in\pi. \]
Let us write $\abs{\pi}^{\downarrow}\in\masspart$ for the decreasing reordering of $(\abs{B}, B\in\pi)$, ignoring the zero terms coming from the dust.
Now it is known \cite[Theorem 2]{kingmanCoalescent1982} that the conditional distribution of $\pi$ given $\abs{\pi}^{\downarrow} = \mathbf{s}$ is $\rho_{\mathbf{s}}$, so we have
\[ \P(\pi\in\point) = \int\P(\abs{\pi}^{\downarrow} \in \dd \mathbf{s}) \rho_{\mathbf{s}}(\point). \]
This means that any exchangeable probability measure on $\partit{\infty}$ is of the form $\rho_{\nu}$ where $\nu$ is a probability measure on $\masspart$, and 
\[ \rho_{\nu}(\point) := \int_{\masspart}\rho_{\mathbf{s}}(\point) \nu(\dd \mathbf{s}). \]
Furthermore, Bertoin~\cite[Theorem 3.1]{bertoinRandom2006} shows that any exchangeable measure $\mu$ on $\partit{\infty}$ such that
\begin{equation}\label{eq:mu_sig_finite_simple}
\forall n\geq 1, \quad \mu(\pi\restr{n} \neq \mathbf{1}_{[n]}) < \infty
\end{equation}
can be written $\mu = c\mathfrak{e} + \rho_{\nu}$, where $c\geq 0$, $\nu$ is a measure on $\masspart$ satisfying
\begin{equation}\label{eq:nu_condition_simple}
\int_{\masspart}(1-s_1) \nu(\dd\mathbf{s}) < \infty,
\end{equation}
and $\mathfrak{e}$ is the so-called \emph{erosion measure}, defined by
\[ \mathfrak{e} := \textstyle \sum_{i\in\N} \delta_{\{\{i\},\N\setminus\{i\}\}}. \]

As a result, each fragmentation process with values in $\partit{\infty}$ is characterized by its erosion coefficient $c$ and characteristic measure $\nu$, in such a way that its rates can be described as follows:

\block{A block of size $n$ fragments, independently of the other blocks, into a partition with $k$ different blocks of sizes $n_1,n_2,\ldots,n_k$ with rate
\[ c\1\{k=2,\text{ and }\,n_1=1\text{ or }n_2=1\}+\int_{\masspart}\nu(\dd \mathbf{s})\sum_{\mathbf{i}}s^{n_1}_{i_1}\cdot s^{n_2}_{i_2}\cdots s^{n_k}_{i_k}, \]
where $s_0$ is defined to be $1-\sum_{i\geq 1} s_i$, and the sum is over the vectors $\mathbf{i}=(i_1,\ldots,i_k)\in\{0,1,\ldots\}^{k}$ such that $i_j$ may be $0$ only if $n_j=1$, and if $j\neq j'$ and $i_j\neq 0$, then $i_{j'}\neq i_{j}$.}

We aim at showing a similar result concerning fragmentations of nested partitions.

\section{Outer branching property} \label{sec:outer_branch}

From now on, to be able to give a more precise characterization of nested fragmentation processes, we will exclude from the study those processes which exhibit simultaneous fragmentations in separate blocks.
That is, we will assume a branching property: two different blocks at a given time undergo two independent fragmentations in the future.
In the univariate case, Bertoin~\cite[Definition 3.2]{bertoinRandom2006} expresses the branching property thanks to the introduction of a mapping $\text{Frag}:\partit{\infty}\times\partit{\infty}^{\N}\to\partit{\infty}$.
While a similar definition could be made in the nested case, the analog of the $\text{Frag}$ mapping would be too lengthy to introduce and we found simpler to assume an equivalent fact, which is all we will use in later proofs: distinct blocks fragment at distinct times.

We also need to distinguish two branching properties in the case of nested fragmentations, each concerning either the outer or the inner blocks (branching property for $\xi$ or for $\zeta$).

\begin{definition}
Let $\Pi = (\Pi(t), t\geq 0) = ((\zeta(t), \xi(t)), t\geq 0)$ be a strongly exchangeable Markov process with values in $\partitemb{\infty}$ and decreasing càdlàg sample paths.
We say that $\Pi$ satisfies the \textbf{outer branching property} if
  
  \block{Almost surely for all $t$ such that $\Pi(t-) \neq \Pi(t)$, there is a unique block $B \in \xi(t-)$ such that $\Pi(t-)_{|B}\neq\Pi(t)_{|B}$.}

Moreover, we say that $\Pi$ satisfies the \textbf{inner branching property} if

  \block{Almost surely for all $t$ such that $\zeta(t-) \neq \zeta(t)$, there is a unique block $B \in \zeta(t-)$ such that $\zeta(t-)_{|B} \neq \zeta(t)_{|B}$.}

Nested fragmentations processes satisfying both branching properties will be called \textbf{simple}.
\end{definition}

The rest of the paper is dedicated to characterize as simply and precisely as possible simple nested fragmentations processes.


\begin{prop} \label{prop:outer_branching}
Let $\Pi = (\Pi(t), t\geq 0) = ((\zeta(t), \xi(t)), t\geq 0)$ be a strongly exchangeable Markov process with values in $\partitemb{\infty}$ and decreasing càdlàg sample paths.
Write $K$ for its exchangeable characteristic kernel.
  
If $\Pi$ satisfies the \textbf{outer branching property}, then the characteristic kernel $K$ is characterized by a simpler kernel $\kappa$ from $\partit{\infty}$ to $\partitemb{\infty}$ which is defined as
\[ \kappa_{\zeta}(\point) := K_{(\zeta, \mathbf{1})}(\point), \]
where $\mathbf{1}$ denotes the partition of $\N$ with only one block.
The simpler kernel is also strongly exchangeable.

The kernel $K$ is determined by $\kappa$ in the following way: fix $\pi_0 = (\zeta, \xi)\in\partitemb{\infty}$ and for simplicity suppose that all the blocks of $\xi$ are infinite.
For all $B\in\xi$, define an injection $\sigma_{B} : \N \to \N$ whose image is $B$, and $\tau_B : B \to \N$ such that $\sigma_{B} \circ \tau_{B} = \id_{B}$.
By definition, $(\pi_0)^{\sigma_B}$ is of the form $(\zeta_B, \mathbf{1})$, with $\zeta_B = \zeta^{\sigma_{B}}$.
Now define $f_B$ as the application which maps $\pi\in\partitemb{\infty}$ to the unique $\omega \in\partitemb{\infty}$ such that
\begin{itemize}
\item $\omega \preceq (\{B,\N\setminus B\},\{B,\N\setminus B\})$,
\item $\omega_{|B} = \pi^{\tau_B}$ and $\omega_{|\N\setminus B} = (\pi_0)_{|\N\setminus B}$.
\end{itemize}
Then for any Borel set $A\subset\partitemb{\infty}$, we have
\begin{equation*}
K_{\pi_0}(A) = \sum_{B\in\xi}\kappa_{\zeta_B}(\{f_B(\pi)\in A\}\cap\{\pi \neq (\pi_0)^{\sigma_{B}}\}).
\end{equation*}
\end{prop}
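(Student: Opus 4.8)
The plan is to exploit the outer branching property to "decompose" a jump of $\Pi$ as a jump localized to a single outer block $B \in \xi$. The first step is to show that $\kappa$ is well-defined and strongly exchangeable: since $K$ is strongly exchangeable and $(\zeta,\mathbf 1)^\sigma = (\zeta^\sigma, \mathbf 1)$ for any injection $\sigma$ (because $\mathbf 1^\sigma = \mathbf 1$), we get $\kappa_\zeta^\sigma = K_{(\zeta,\mathbf 1)}^\sigma = K_{(\zeta,\mathbf 1)^\sigma} = K_{(\zeta^\sigma,\mathbf 1)} = \kappa_{\zeta^\sigma}$ directly from the strong exchangeability of $K$. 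Measurability of $\zeta \mapsto \kappa_\zeta(A)$ is inherited from that of $K$.

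Next I would prove the decomposition formula. Fix $\pi_0 = (\zeta,\xi)$ with all blocks of $\xi$ infinite. The key observation is that $\{B, \N\setminus B\}$ restricted sensibly, together with strong exchangeability of $K$ applied to $\sigma_B$, gives $K_{\pi_0}^{\sigma_B} = K_{(\pi_0)^{\sigma_B}} = K_{(\zeta_B,\mathbf 1)} = \kappa_{\zeta_B}$. So for any Borel $A' \subset \partitemb{\infty}$,
\[ K_{\pi_0}\bigl(\{\pi^{\sigma_B} \in A'\}\bigr) = \kappa_{\zeta_B}(A'). \]
Now, the outer branching property says that $K_{\pi_0}$-almost every jump $\pi$ differs from $\pi_0$ only inside one block $B\in\xi$; that is, $\pi \preceq (\{B,\N\setminus B\},\{B,\N\setminus B\})$ and $\pi_{|\N\setminus B} = (\pi_0)_{|\N\setminus B}$, for a unique such $B$. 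This means $K_{\pi_0}$ decomposes as a sum over $B\in\xi$ of its restrictions to the event $E_B := \{\pi \preceq (\{B,\N\setminus B\},\{B,\N\setminus B\}),\ \pi_{|\N\setminus B} = (\pi_0)_{|\N\setminus B},\ \pi_{|B}\neq (\pi_0)_{|B}\}$ (the uniqueness guarantees these events partition $\{\pi\neq\pi_0\}$ up to a $K_{\pi_0}$-null set). On $E_B$, the element $\pi$ is entirely determined by its restriction $\pi_{|B}$, hence by $\pi^{\sigma_B} = (\pi_{|B})^{\tau_B^{-1}}$ — more precisely $\pi = f_B(\pi^{\sigma_B})$, because $f_B$ is exactly the map reconstructing $\omega$ from its $B$-part (precomposed with $\tau_B$) and the frozen complement. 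Therefore
\[ K_{\pi_0}(A) = \sum_{B\in\xi} K_{\pi_0}(A \cap E_B) = \sum_{B\in\xi} K_{\pi_0}\bigl(\{f_B(\pi^{\sigma_B}) \in A\} \cap \{\pi^{\sigma_B}\neq (\pi_0)^{\sigma_B}\}\bigr), \]
and applying the pushforward identity above (with $A'$ the preimage under $\pi\mapsto f_B(\pi)$ intersected with $\{\pi\neq(\pi_0)^{\sigma_B}\}$) yields exactly the claimed formula.

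The main obstacle I anticipate is making the heuristic "the jump happens in one block $B$" into a clean measure-theoretic statement, i.e.\ justifying that $K_{\pi_0}$ restricted to $\{\pi\neq\pi_0\}$ is supported on $\bigsqcup_B E_B$. This requires translating the almost-sure pathwise branching property (a statement about the process $\Pi$) into a statement about the jump kernel $K$. The argument is: the set of jump times $t$ with $\Pi(t-)\neq\Pi(t)$ and $\Pi(t-) = \pi_0$ has, by the Lévy–Itô-type description of the pure-jump Markov chain $\Pi^n$ (or directly from $K$ being the jump intensity as in Proposition~\ref{prop:projective_markov_kernel}), intensity measure $K_{\pi_0}$; the branching property forces each such jump into some $E_B$; hence $K_{\pi_0}(\{\pi\neq\pi_0\}\setminus\bigcup_B E_B) = 0$. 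One also needs the easy facts that the $E_B$ are Borel, pairwise disjoint (uniqueness of $B$), and that $f_B$ is measurable — all routine. A minor point to handle is the reduction to the case "all blocks of $\xi$ infinite": for finite blocks one works with finite injections $\sigma_B$ and the restriction maps, which changes nothing essential, so stating the formula under that simplifying hypothesis (as in the statement) and remarking the general case is identical suffices.
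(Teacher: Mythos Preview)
Your proposal is correct and follows essentially the same route as the paper's proof. The only point worth making explicit is that you need the decreasing assumption (not just the outer branching property) to conclude $\pi\preceq\pi_0$ holds $K_{\pi_0}$-a.e., which is what actually gives both $\pi\preceq(\{B,\N\setminus B\},\{B,\N\setminus B\})$ and $\pi_{|\N\setminus B}=(\pi_0)_{|\N\setminus B}$ from the branching statement; the paper establishes these kernel-level facts by passing through the finite restrictions $\Pi^n$ and their transition rates, which is exactly the mechanism you gesture at via Proposition~\ref{prop:projective_markov_kernel}.
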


\begin{remark} \label{rq:nolog}
This proposition shows how $K_{\pi_0}$ is expressed in terms of the kernel $\kappa$ only for $\pi_0=(\zeta,\xi)$ such that all the blocks of $\xi$ are infinite.
In fact this is enough to characterize $K$ entirely since if $\pi_0$ does not satisfy this property, there exists a nested partition $\pi_0'=(\zeta',\xi')$ which does and an injection $\sigma : \N \to \N$ such that $\pi_0 = (\pi_0')^{\sigma}$.
Then we have $K_{\pi_0} = K_{\pi_0'}^{\sigma}$, which is determined by $\kappa$.
\end{remark}

\begin{proof}
First note  that the fact that $\Pi$ has decreasing sample paths implies that for any $\pi_0\in \partitemb{\infty}$, the support of the measure $K_{\pi_0}$ is included in $\{\pi \preceq \pi_0\}$.
Indeed, since $\{\pi \preceq \pi_0\} = \cap_{n\geq 1}\{\pi\restr{n} \preceq (\pi_0)\restr{n}\}$, we have
\[ K_{\pi_0}(\{\pi \npreceq \pi_0\}) = \lim_{n\to\infty} K_{\pi_0}(\pi\restr{n} \npreceq (\pi_0)\restr{n}), \]
where for any $n\geq 1$, the right-hand side is equal to the (finite) transition rate of the Markov chain $\Pi^n$ from $(\pi_0)\restr{n}$ to any $\pi$ for which $\pi \npreceq (\pi_0)\restr{n}$.
But $\Pi^n$ is a decreasing process by assumption, so this rate is zero, so we conclude
\begin{equation}\label{eq:kappa_decreasing}
K_{\pi_0}(\pi \npreceq \pi_0) = 0
\end{equation}

Using the same argument, it is clear that the outer branching property implies that for any $\pi_0=(\zeta, \xi)\in\partitemb{\infty}$, we have
\begin{equation} \label{eq:outer_branching_kappa}
K_{\pi_0}\bigg ( \bigcup_{B_1\neq B_2 \in\xi} \{ \pi_{|B_1} \neq (\pi_0)_{|B_1} \text{ and } \pi_{|B_2} \neq (\pi_0)_{|B_2} \} \bigg) = 0.
\end{equation}

Now without loss of generality (see Remark~\ref{rq:nolog}), suppose that all the blocks of $\xi$ are infinite, and let us define for all $B\in\xi$, an injection $\sigma_{B} : \N \to \N$ whose image is $B$, and $\tau_B : B \to \N$ such that $\sigma_{B} \circ \tau_{B} = \id_{B}$.
Equations~\eqref{eq:kappa_decreasing} and~\eqref{eq:outer_branching_kappa} imply that for any $B\in \xi$, on the event $\{\pi_{|B} \neq (\pi_0)_{|B}\}$, we have
\[ \pi = f_B(\pi^{\sigma_B}) \quad K_{\pi_0}\text{-a.e.}, \]
where $f_B$ is the application defined in the proposition.
Then for any Borel set $A\subset\partitemb{\infty}$, we have
\begin{align*}
K_{\pi_0}(A) &= K_{\pi_0}(\cup_{B\in\xi}(A\cap\{\pi_{|B} \neq (\pi_0)_{|B}\}))\\
&= \sum_{B\in\xi}K_{\pi_0}(A\cap\{\pi_{|B} \neq (\pi_0)_{|B}\})\\
&= \sum_{B\in\xi}K_{\pi_0}(\{f_B(\pi^{\sigma_B})\in A\}\cap\{\pi^{\sigma_B} \neq (\pi_0)^{\sigma_{B}}\})\\
&= \sum_{B\in\xi}K_{(\pi_0)^{\sigma_B}}(\{f_B(\pi)\in A\}\cap\{\pi \neq (\pi_0)^{\sigma_{B}}\}).
\end{align*}
Now by definition of $\sigma_B$, $(\pi_0)^{\sigma_B}$ is of the form $(\zeta_B, \mathbf{1})$, which concludes the proof that $K_{\pi_0}$ can be expressed with the simpler kernel $\kappa$.
Finally, by definition, it is clear that $\kappa$ inherits the strong exchangeability from $K$.
\end{proof}

Now, to further analyze the ``simplified characteristic kernel'' $\kappa$ of an outer branching fragmentation, we need to introduce some tools, reducing the problem to study exchangeable (with respect to a particular set of injection $\monoline$) partitions on $\N^{2}$.

\subsection{\texorpdfstring{$\monoline$}{M}-invariant measures}

Let $\monoline$ be the monoid of applications $\N^2\to\N^2$ consisting of injective maps of the form
\[ (i,j) \longmapsto (\sigma(i), \sigma_i(j)), \]
where $\sigma$ and $\sigma_1, \sigma_2, \ldots$ are injections $\N\to\N$.
Let us write $\pi_{\mathrm{R}}$ for the ``rows partition'' $\{\{(i,j),\,j\geq 1\},\; i\geq 1\} \in\partit{\N^2}$, which is the minimal non-trivial (i.e.\ different from $\mathbf{0}$) $\monoline$-invariant partition.

\begin{prop} \label{prop:kappa_to_mu}
Let $\kappa$ be a strongly exchangeable kernel from $\partit{\infty}$ to $\partitemb{\infty}$, and let $\pi_0$ denote a partition of $\N$ with an infinity of infinite blocks (and no finite block).
Choose a bijection $\sigma: \N^2 \rightarrow \N$ such that $\pi_0^{\sigma} = \pi_{\mathrm{R}}$.

Then $\mu := \kappa_{\pi_0}^{\sigma}$ is a measure on $\partitemb{\N^2}$ which is $\monoline$-invariant.
Moreover, $\mu$ does not depend on $\pi_0$ or $\sigma$ and the mapping $\kappa \mapsto \mu$ is bijective from the set of strongly exchangeable kernels to the set of $\monoline$-invariant measures on $\partitemb{\N^2}$.
\end{prop}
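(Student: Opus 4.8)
The plan is to establish the claim in three stages: $\monoline$-invariance of $\mu$, independence of the choice of $(\pi_0,\sigma)$, and bijectivity of $\kappa\mapsto\mu$. Throughout I will use the strong exchangeability of $\kappa$, namely $\kappa_{\pi_0}^\sigma = \kappa_{\pi_0^\sigma}$ for injections $\sigma$ of $\N$, together with the functoriality $\mu^{\sigma\tau}=(\mu^\sigma)^\tau$ recorded in Section~\ref{sec:defs}.

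First, $\monoline$-invariance. Fix $m\in\monoline$, written $m(i,j)=(\sigma(i),\sigma_i(j))$. The key observation is that $m$ acts on $\pi_{\mathrm R}$ by permuting and restricting rows: $\pi_{\mathrm R}^{m}$ is again a partition of $\N^2$ into rows, and since each $\sigma_i$ is an injection of $\N$ while $\sigma$ is an injection of $\N$, one checks directly that $\pi_{\mathrm R}^{m}=\pi_{\mathrm R}$ (every block of $m^{-1}$ of a row is again a full row, because the row-index is determined by the first coordinate). Now $\sigma\circ m\colon \N^2\to\N$ is an injection, and $\pi_0^{\sigma m}=(\pi_0^\sigma)^m=\pi_{\mathrm R}^m=\pi_{\mathrm R}$. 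The map $\sigma m$ has image a sub-``grid'' $S\subset\N$ which, under $\sigma^{-1}$, is a union of infinitely many infinite blocks of $\pi_0$; so there is an injection $\rho\colon\N\to\N$ with $\sigma m = \sigma\circ(\text{something})$... more cleanly: I will argue that $(\pi_0^\sigma)^m = \pi_0^{\sigma m}$ and that there is an injection $\theta\colon\N\to\N$ with $\sigma m = \theta\sigma$ as maps $\N^2\to\N$ (since both send $\pi_0$-blocks into $\pi_0$-blocks compatibly, and $\pi_0$ has only infinite blocks, one can interpolate). Then $\mu^m = (\kappa_{\pi_0}^\sigma)^m = \kappa_{\pi_0}^{\sigma m} = \kappa_{\pi_0}^{\theta\sigma} = (\kappa_{\pi_0}^\theta)^\sigma = \kappa_{\pi_0^\theta}^\sigma$, and since $\pi_0$ has an infinity of infinite blocks and no finite block, $\pi_0^\theta$ is again such a partition; invoking the independence statement (next paragraph) gives $\kappa_{\pi_0^\theta}^\sigma = \mu$ once we know $\mu$ does not depend on the chosen data. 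So I would prove independence first and feed it into the invariance argument.

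Second, independence of $(\pi_0,\sigma)$. Let $(\pi_0,\sigma)$ and $(\pi_0',\sigma')$ be two valid choices. Since any two partitions of $\N$ with an infinity of infinite blocks and no finite block are related by an injection — in fact by a bijection — $\theta$ of $\N$, we have $\pi_0'=\pi_0^\theta$. Both $\sigma^{-1}\colon\N\to\N^2$ and $\sigma'^{-1}\colon\N\to\N^2$ pull $\pi_{\mathrm R}$ back to (a partition with the required structure), and $\sigma'\theta^{-1}\colon$ ... concretely: $\theta\sigma$ and $\sigma'$ are two bijections $\N^2\to\N$ both sending $\pi_{\mathrm R}$ to $\pi_0'$; hence $m:=\sigma'^{-1}\circ\theta\circ\sigma\colon\N^2\to\N^2$ is a bijection with $\pi_{\mathrm R}^{m}=\pi_{\mathrm R}$, so $m$ is (a bijective element of) $\monoline$ — this is precisely the statement that bijections of $\N^2$ stabilizing the rows partition are exactly bijective $\monoline$-maps. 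Then $\kappa_{\pi_0'}^{\sigma'} = \kappa_{\pi_0^\theta}^{\sigma'} = (\kappa_{\pi_0}^\theta)^{\sigma'} = \kappa_{\pi_0}^{\theta\sigma'}$, and $\theta\sigma' = \theta\sigma\circ(\sigma^{-1}\theta^{-1}\theta\sigma')$... I will instead just write $\kappa_{\pi_0'}^{\sigma'} = \kappa_{\pi_0}^{\theta\sigma'} = (\kappa_{\pi_0}^{\sigma})^{\sigma^{-1}\theta\sigma'}$ where $\sigma^{-1}\theta\sigma'\in\monoline$; but this last step is circular with invariance. The clean resolution is to prove the \emph{joint} statement ``$\mu$ is well-defined and $\monoline$-invariant'' simultaneously: show that for \emph{any} two valid choices the resulting measures agree after applying a suitable $\monoline$-element, and that $\monoline$-elements include all the transition maps, so the only consistent conclusion is that $\mu$ is $\monoline$-invariant and choice-independent. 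I will organize this as: (a) every bijection of $\N^2$ fixing $\pi_{\mathrm R}$ lies in $\monoline$; (b) for valid $(\pi_0,\sigma),(\pi_0',\sigma')$ there is such a bijection $m$ with $\kappa_{\pi_0'}^{\sigma'} = (\kappa_{\pi_0}^{\sigma})^{m}$; (c) taking $(\pi_0',\sigma')=(\pi_0,\sigma)$ but inserting an arbitrary $m\in\monoline$ on one side yields $\mu^m=\mu$; (d) combining, $\mu$ is $\monoline$-invariant and $\kappa_{\pi_0'}^{\sigma'}=\mu^m=\mu$.

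Third, bijectivity. Injectivity: if $\mu=\kappa_{\pi_0}^\sigma$, then $\kappa_{\pi_0}=\mu^{\sigma^{-1}}$ recovers $\kappa$ on the universal-type element $\pi_0$, and by the remark following the previous proposition (strong exchangeability, $\kappa_\pi=\kappa_{\pi_0}^\rho$ for any injection $\rho$ with $\pi=\pi_0^\rho$, using that $\pi_0$ is universal) this determines $\kappa_\pi$ for all $\pi\in\partit\infty$; so $\mu$ determines $\kappa$. Surjectivity: given an $\monoline$-invariant measure $\mu$ on $\partitemb{\N^2}$, define $\kappa_{\pi_0}:=\mu^{\sigma^{-1}}$ and extend to $\kappa_\pi:=\kappa_{\pi_0}^\rho=\mu^{\sigma^{-1}\rho}$ for $\pi=\pi_0^\rho$; one must check this is well-defined (if $\pi_0^\rho=\pi_0^{\rho'}$ then $\rho'=m\rho$ for a map $m$ stabilizing $\pi_0$, whose transport $\sigma^{-1}m\sigma$ lies in $\monoline$, so $\mu^{\sigma^{-1}\rho}=\mu^{\sigma^{-1}\rho'}$ by $\monoline$-invariance), that it satisfies the defining properties of a strongly exchangeable kernel ($K_{\pi}(\{\pi\})$-analogue: need $\mu$ supported off the trivial state — this should be imposed or follows from the target being the relevant measure class; I will check $\kappa_\pi(\{\pi\})=0$ reduces to $\mu(\{\pi_{\mathrm R}\})=0$), and that its image under $\kappa\mapsto\mu$ is the given $\mu$. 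The main obstacle I anticipate is the purely combinatorial lemma (a) — that every bijection of $\N^2$ preserving the rows partition is of the form $(i,j)\mapsto(\sigma(i),\sigma_i(j))$ — and, more delicately, handling \emph{injections} rather than bijections when passing between two partitions $\pi_0,\pi_0'$ with infinitely many infinite blocks: one must use that such partitions admit not merely injections but bijections between them, or otherwise argue with interpolating injections $\N\to\N$ whose transports land in $\monoline$. Getting the bookkeeping of ``$\sigma^{-1}(\text{injection of }\N)\sigma \in \monoline$'' exactly right, so that strong exchangeability of $\kappa$ transfers cleanly to $\monoline$-invariance of $\mu$, is where the real care is needed; the rest is formal manipulation of push-forwards.
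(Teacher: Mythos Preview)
Your plan founders on a missed computation that dissolves the circularity entirely. You correctly identify the conjugate $\theta=\sigma m\sigma^{-1}\colon\N\to\N$ satisfying $\theta\sigma=\sigma m$, but then write ``$\pi_0^\theta$ is again such a partition'' and appeal to independence. In fact $\pi_0^\theta=\pi_0$ \emph{exactly}: since $\sigma$ is a bijection and $m\in\monoline$ fixes $\pi_{\mathrm R}$, one has $\pi_0^\theta=((\pi_0^\sigma)^m)^{\sigma^{-1}}=(\pi_{\mathrm R}^m)^{\sigma^{-1}}=\pi_{\mathrm R}^{\sigma^{-1}}=\pi_0$. With this, your own chain $\mu^m=\kappa_{\pi_0^\theta}^\sigma$ immediately gives $\mu^m=\kappa_{\pi_0}^\sigma=\mu$, proving $\monoline$-invariance directly from strong exchangeability of $\kappa$ with no reference to independence. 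This is exactly what the paper does; independence is then proved \emph{afterwards}, using the already-established $\monoline$-invariance (two choices $(\pi_i,\sigma_i)$ differ by a bijection $\phi$ of $\N$ with $\pi_1^\phi=\pi_2$, and $\sigma_2^{-1}\phi^{-1}\sigma_1$ is a bijective element of $\monoline$). Your joint scheme (a)--(d) is unnecessary, and as written (c) only yields invariance under \emph{bijective} $m\in\monoline$ anyway.

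There is a second, genuine gap in your surjectivity argument. The claim ``if $\pi_0^\rho=\pi_0^{\rho'}$ then $\rho'=m\rho$ for some injection $m$ stabilising $\pi_0$'' is false: take $\rho$ mapping $\N$ to a proper infinite subset of a block $B_1$ of $\pi_0$, and $\rho'$ mapping $\N$ bijectively onto another block $B_2$; any $m$ with $m\rho=\rho'$ must send $B_1$ into $B_2$, but then $m$ cannot be extended injectively to the rest of $B_1$ while preserving $\pi_0$, since $B_2$ is already exhausted. The paper sidesteps this by working with injections $\sigma_0\colon\N\to\N^2$ (rather than $\rho\colon\N\to\N$) and using a \emph{doubling trick}: for any two injections $\sigma_1,\sigma_2\colon\N\to\N^2$ with $\pi_{\mathrm R}^{\sigma_1}=\pi_{\mathrm R}^{\sigma_2}$, one constructs $\tau\in\monoline$ with $\tau\circ\sigma_1=2\sigma_2$, where $2\sigma(n):=(2i,2j)$ if $\sigma(n)=(i,j)$. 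The doubling creates the room (odd coordinates) needed to extend the partial map to an element of $\monoline$. This is precisely the ``combinatorial lemma'' you anticipated but did not formulate; without it, well-definedness of the inverse kernel does not go through.
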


\begin{proof}
Fix $\tau \in\monoline$ and a Borel set $A \subset \partitemb{\N^2}$.
We need to prove $\mu(\pi^\tau \in A) = \mu(A)$.
Consider $\phi = \sigma \circ \tau \circ \sigma^{-1}$.
This application satisfies $\phi\circ\sigma = \sigma\circ\tau$ and $\pi_0^{\phi} = \pi_0$, so we have
\begin{align*}
\mu(\pi^\tau \in A) &= \kappa_{\pi_0}(\pi^{\sigma\circ\tau}\in A)\\
&= \kappa_{\pi_0}(\pi^{\phi\circ\sigma}\in A)\\
&= \kappa_{\pi_0^{\phi}}(\pi^{\sigma}\in A)\\
&= \mu(A).
\end{align*}
This proves that $\mu$ is $\monoline$-invariant.
Let us now prove that $\mu$ does not depend on $\pi_0$ or $\sigma$: fix $\pi_1, \pi_2 \in\partit{\infty}$ (both with an infinity of infinite blocks and no finite block) and $\sigma_1,\sigma_2$ bijections from $\N^2 $ to $\N$ such that $\pi_i^{\sigma_i} = \pi_{\mathrm{R}}$.
We need to show 
\[\kappa_{\pi_1}(\pi^{\sigma_1}\in\point) = \kappa_{\pi_2}(\pi^{\sigma_2}\in\point).\]
Let $\phi$ be a bijection such that $\pi_1^\phi = \pi_2$.
Note that $\pi_{\mathrm{R}}^{\sigma_2^{-1}\circ\phi^{-1}\circ\sigma_1} = \pi_2^{\phi^{-1}\circ\sigma_1} = \pi_1^{\sigma_1} = \pi_{\mathrm{R}}$, i.e.\ $\sigma_2^{-1}\circ\phi^{-1}\circ\sigma_1 \in\monoline$.
Now we have
\begin{align*}
\kappa_{\pi_1}(\pi^{\sigma_1} \in \point) &= \kappa_{\pi_1}\left ((\pi^{\phi})^{\phi^{-1}\circ\sigma_1}\in\point\right )\\
&= \kappa_{\pi_2}\left (\pi^{\phi^{-1}\circ\sigma_1}\in\point\right )\\
&= \kappa_{\pi_2}\left ((\pi^{\sigma_2})^{\sigma_2^{-1}\circ\phi^{-1}\circ\sigma_1}\in\point\right )\\
&= \kappa_{\pi_2}\left (\pi^{\sigma_2}\in\point\right ),
\end{align*}
where the last equality follows from the $\monoline$-invariance of $\kappa_{\pi_2}\left (\pi^{\sigma_2}\in\point\right )$.
So $\mu$ is well defined and depends only on $\kappa$.

We now prove that $\kappa \mapsto\mu$ is bijective.
For any injection $\sigma : \N\to\N^2$, we write $2\sigma$ for the application
\[ 2\sigma : \begin{cases}
\N &\longrightarrow\N^2\\
n &\longmapsto 2\sigma(n) = (2i, 2j) \quad \text{where } \sigma(n) = (i,j).
\end{cases} \]
Note that for any injection $\sigma:\N\to\N^2$, we have $\pi_{\mathrm{R}}^{\sigma} = \pi_{\mathrm{R}}^{2\sigma}$.
Now let $\sigma_1, \sigma_2$ be any two injections such that $\pi_{\mathrm{R}}^{\sigma_1} = \pi_{\mathrm{R}}^{\sigma_2}$.
Then there exists a $\tau\in\monoline$ such that
\[ \tau\circ\sigma_1 = 2\sigma_2. \]
Indeed one such $\tau$ can be defined in the following way.
First let us define an injection $\phi:\N\to\N$, which will serve as a mapping for rows.
For any $i\in\N$, there are two possibilities:
\begin{itemize}
\item either there is a $j\in\N$ such that $(i,j)\in\im(\sigma_1)$, and then there is an even integer $i'\in\N$ such that $2\sigma_2 (\sigma_1^{-1}(i,j)) = (i',k)$ for some $k\in\N$.
This number $i'$ does not depend on $j$ because of the fact that $\pi_{\mathrm{R}}^{\sigma_1} = \pi_{\mathrm{R}}^{\sigma_2}$.
Indeed if $j_1,j_2\in\N$ are such that $(i,j_1),(i,j_2)\in \im(\sigma_1)$, then by definition $\sigma^{-1}(i,j_1)$ and $\sigma^{-1}(i,j_2)$ belong to the same block of $\pi_{\mathrm{R}}^{\sigma_1} = \pi_{\mathrm{R}}^{\sigma_2}$, and so $\sigma_2(\sigma^{-1}(i,j_1))$ and $\sigma_2(\sigma^{-1}(i,j_2))$ belong to the same block of $\pi_{\mathrm{R}}$.
So in that case we can define $\phi(i) := i'$.
\item either $\im(\sigma_1)\cap\{(i,j), \; j\geq 1\} = \emptyset$, and then we define $\phi(i) = 2i-1$.
\end{itemize}
The map $\phi$ is a well-defined injection, and we may now define
\[ \tau : \begin{cases}
(i,j) \in \im(\sigma_1) &\longmapsto 2 \sigma_2(\sigma_1^{-1}(i,j))\\
(i,j) \notin \im(\sigma_1) &\longmapsto (\phi(i), 2j-1)
\end{cases} \]
It is easy to check that $\tau \in\monoline$ and that $\tau\circ\sigma_1 = 2\sigma_2$.
We can now fix $\mu$ a $\monoline$-exchangeable measure on $\partitemb{\infty}$.
Consider a partition $\pi_0\in\partit{\infty}$ and an injection $\sigma_0 :\N\to\N^2$ such that $\pi_{\mathrm{R}}^{\sigma_0} = \pi_0$.
Now for any other $\sigma_1$ such that $\pi_{\mathrm{R}}^{\sigma_1} = \pi_0$, let $\tau\in\monoline$ be such that $\tau\circ\sigma_1 = 2\sigma_0$.
By $\monoline$-invariance of $\mu$, we have
\begin{align*}
\mu(\pi^{\sigma_1}\in\point) &= \mu(\pi^{\tau\circ\sigma_1}\in\point)\\
&= \mu(\pi^{2\sigma_0}\in\point).
\end{align*}
Therefore this measure does not depend on $\sigma_1$ but only on $\pi_0$, so we may define
\[ \kappa_{\pi_0} := \mu(\pi^{\sigma_0} \in\point), \]
which is a measure on $\partitemb{\infty}$, for all $\pi_0$.
Now it remains to check that for any injection $\sigma:\N\to\N$, we have $\kappa_{\pi_0}^{\sigma} = \kappa_{\pi_0^{\sigma}}$.
But if $\pi_{\mathrm{R}}^{\sigma_0} = \pi_0$, then $\pi_{\mathrm{R}}^{\sigma_0 \circ \sigma} = \pi_0^{\sigma}$, so
\begin{align*}
\kappa_{\pi_0}^{\sigma} &= \mu((\pi^{\sigma_0})^{\sigma} \in\point)\\
&= \mu(\pi^{\sigma_0\circ\sigma} \in\point)\\
&= \kappa_{\pi_0^{\sigma}},
\end{align*}
so $\kappa$ is a strongly exchangeable kernel from $\partit{\infty}$ to $\partitemb{\infty}$, and it is easy to check that the $\monoline$-invariant measure associated to $\kappa$ is $\mu$.
\end{proof}

Putting together Proposition~\ref{prop:outer_branching} and Proposition~\ref{prop:kappa_to_mu} gives us:

\begin{theorem} \label{thm:outer_charac}
Let $\Pi = (\Pi(t), t\geq 0) = ((\zeta(t), \xi(t)), t\geq 0)$ be a strongly exchangeable Markov process with values in $\partitemb{\infty}$ and decreasing càdlàg sample paths.
Suppose that $\Pi$ satisfies the \textbf{outer branching property}.
Then the distribution of $\Pi$ is characterized by an $\monoline$-invariant measure $\mu$ on $\partitemb{\N^2}$ satisfying
\begin{equation}\label{eq:mu_sig_finite}
\begin{gathered}
\mu\left (\pi\nprec(\pi_{\mathrm{R}}, \mathbf{1})\right ) = 0\\
\text{and }\forall n\in\N, \qquad \mu\left (\pi_{|[n]^2} \neq (\pi_{\mathrm{R}}, \mathbf{1})_{|[n]^2}\right ) < \infty.
\end{gathered}
\end{equation}
The characterization is in the sense that for any $\pi_0,\pi_1 \in \partit{\infty}$ with an infinity of infinite blocks,
\[ \mu = \kappa_{\pi_0}^{\sigma_0}\quad \text{ and }\quad \kappa_{\pi_1} = \mu^{\sigma_1}, \]
where $\kappa$ is the simplified characteristic kernel of $\Pi$, $\sigma_0:\N^2\to\N$ is any injection such that $\pi_{0}^{\sigma_0} = \pi_{\mathrm{R}}$ and $\sigma_1:\N\to\N^2$ is any injection such that $\pi_{\mathrm{R}}^{\sigma_1} = \pi_1$.

Conversely, for any such measure $\mu$, there is a strongly exchangeable Markov process with values in $\partitemb{\infty}$, decreasing càdlàg sample paths and the outer branching property with characteristic measure $\mu$.
\end{theorem}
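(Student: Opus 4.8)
The plan is to assemble the result from the three preceding statements — Proposition~\ref{prop:outer_branching}, Proposition~\ref{prop:kappa_to_mu}, and Proposition~\ref{prop:projective_markov_kernel} — and then verify that the two finiteness conditions in~\eqref{eq:mu_sig_finite} are exactly what is needed on both directions of the correspondence. First, starting from $\Pi$ with the outer branching property, Proposition~\ref{prop:outer_branching} produces the simplified kernel $\kappa$ from $\partit{\infty}$ to $\partitemb{\infty}$, which is strongly exchangeable, and then Proposition~\ref{prop:kappa_to_mu} associates to $\kappa$ a unique $\monoline$-invariant measure $\mu = \kappa_{\pi_0}^{\sigma_0}$ on $\partitemb{\N^2}$, independent of the choices of $\pi_0$ and $\sigma_0$. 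Since the maps $\Pi \mapsto K$, $K \mapsto \kappa$, and $\kappa \mapsto \mu$ are each bijective onto their respective images (the first two by the cited propositions, the third by Proposition~\ref{prop:kappa_to_mu}), the distribution of $\Pi$ is characterized by $\mu$, and the displayed relations $\mu = \kappa_{\pi_0}^{\sigma_0}$, $\kappa_{\pi_1} = \mu^{\sigma_1}$ hold for the stated choices of injections.

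The substantive point is to translate the analytic constraints that distinguish a genuine characteristic kernel of a decreasing càdlàg process from an arbitrary strongly exchangeable kernel into the two conditions in~\eqref{eq:mu_sig_finite}. For the first condition: equation~\eqref{eq:kappa_decreasing} in the proof of Proposition~\ref{prop:outer_branching} already gives $K_{\pi_0}(\pi \npreceq \pi_0) = 0$; applying this with $\pi_0 = (\zeta, \mathbf{1})$ for $\zeta$ having infinitely many infinite blocks and transporting through $\sigma_0$ yields $\mu(\pi \nprec (\pi_{\mathrm{R}}, \mathbf{1})) = 0$ (using that $(\pi_{\mathrm R},\mathbf 1)$ has no atom of its own mass under $\mu$, i.e.\ $\mu(\{(\pi_{\mathrm R},\mathbf 1)\})=0$, which comes from $K_{\pi_0}(\{\pi_0\})=0$). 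For the second: the Markov chain $\Pi^n = (\Pi(t)\restr{n}, t\geq 0)$ has finite total jump rate out of any state, and in particular $K_{(\zeta,\mathbf 1)}\big(\pi\restr{n} \neq (\zeta,\mathbf 1)\restr{n}\big) < \infty$; pulling back along $\sigma_0$, the event $\{\pi\restr{n} \neq (\zeta,\mathbf 1)\restr{n}\}$ corresponds (for $n$ large, and up to taking a sufficiently large square) to $\{\pi_{|[n]^2} \neq (\pi_{\mathrm R}, \mathbf 1)_{|[n]^2}\}$, giving $\mu(\pi_{|[n]^2} \neq (\pi_{\mathrm R},\mathbf 1)_{|[n]^2}) < \infty$ for all $n$. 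Conversely, these two conditions are precisely what is needed to run Proposition~\ref{prop:projective_markov_kernel} in reverse: given $\mu$, define $\kappa$ via Proposition~\ref{prop:kappa_to_mu}, reconstruct $K$ from $\kappa$ by the formula in Proposition~\ref{prop:outer_branching} (checking the reconstructed $K$ is a genuine strongly exchangeable kernel with $K_{\pi_0}(\{\pi_0\})=0$), read off the transition rates $q^n_{\pi_1,\pi_2} = K_{\pi_0}(\pi\restr{n} = \pi_2)$, verify these are summable over $\pi_2$ (this is where the second condition in~\eqref{eq:mu_sig_finite} enters) and consistent in $n$, and invoke Kolmogorov's extension theorem to build the process; the decreasing property of the paths follows from $\mu(\pi \nprec (\pi_{\mathrm R},\mathbf 1))=0$ together with strong exchangeability, and càdlàg paths follow from the finite jump rates at each finite level.

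The main obstacle I expect is the bookkeeping in the converse direction — ensuring that the $K$ reconstructed from $\kappa$ through the somewhat intricate formula of Proposition~\ref{prop:outer_branching} (involving the injections $\sigma_B, \tau_B$ and the gluing maps $f_B$) is well-defined independently of the choices, genuinely satisfies the outer branching property, and yields transition rates at each level $n$ that are both finite and consistent so that Kolmogorov's theorem applies. Most of this is routine but tedious verification; the one genuinely delicate estimate is the finiteness of $\sum_{\pi_2} q^n_{\pi_1,\pi_2}$, which must be shown to follow from $\mu(\pi_{|[n]^2} \neq (\pi_{\mathrm R},\mathbf 1)_{|[n]^2}) < \infty$ after accounting for the fact that a block of $\xi$ restricted to $[n]$ may itself be split into several pieces, so one needs a uniform bound over the (finitely many) blocks of $(\pi_0)\restr{n}$ and their possible relabelings. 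I would handle this by first reducing, as in Remark~\ref{rq:nolog}, to $\pi_0$ with all outer blocks infinite, where the sum over $B \in \xi$ contributing to level $n$ is finite, and then bounding each summand by $\mu(\pi_{|[n]^2} \neq (\pi_{\mathrm R},\mathbf 1)_{|[n]^2})$.
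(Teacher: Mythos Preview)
Your forward direction is essentially the paper's: it too simply says ``Putting together Proposition~\ref{prop:outer_branching} and Proposition~\ref{prop:kappa_to_mu}'' and leaves the verification of~\eqref{eq:mu_sig_finite} implicit, so your filling in of those details (via~\eqref{eq:kappa_decreasing} and the finiteness of the jump rates of $\Pi^n$) is fine and matches the intended argument.

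The converse direction, however, is handled differently. The paper does \emph{not} reconstruct $K$ abstractly from $\kappa$ and then invoke Kolmogorov; instead it builds the process directly via the Poissonian construction of Section~\ref{sec:poisson_construct} (Lemma~\ref{lemma:poisson_construct}): one takes a Poisson point process on $\N\times\Rplus\times\partitemb{\N^2}$ with intensity $\#\otimes\dd t\otimes\mu$, uses its atoms to fragment blocks one at a time, and then checks that the resulting processes $\Pi^n$ are consistent in $n$. Your abstract route --- define $\kappa$ from $\mu$, plug into the gluing formula of Proposition~\ref{prop:outer_branching} to get a candidate $K$, read off rates, check consistency, apply Kolmogorov --- can in principle be made to work, but the step you flag as ``routine but tedious'' (consistency of the $q^n$ in $n$) is exactly the substance of the proof of Lemma~\ref{lemma:poisson_construct}: one must check that restricting to $[n]$ commutes with the fragmentation mechanism, which hinges on the careful choice of the injections $\tau$ and on the $\monoline$-invariance of $\mu$. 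So you are not avoiding that work, only repackaging it. The Poisson approach has the advantage of being constructive (useful later, e.g.\ in Section~\ref{sec:binary} for simulation) and of making the outer branching property and the decreasing, c\`adl\`ag nature of the paths immediate from the construction rather than something to be verified after the fact.
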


\begin{remark}
An explicit construction for the converse part of the theorem is described in the next section 
(Lemma~\ref{lemma:poisson_construct}).
\end{remark}

\subsection{Poissonian construction} \label{sec:poisson_construct}

Consider $\mu$ an $\monoline$-invariant measure on $\partitemb{\N^{2}}$ satisfying~\eqref{eq:mu_sig_finite} and let $\Lambda$ be a Poisson point process on $\N\times\Rplus\times\partitemb{\N^{2}}$ with intensity $\#\otimes\dd t\otimes\mu$, where $\#$ denotes the counting measure and $\dd t$ the Lebesgue measure.

Fix $n\in\N$.
Because of~\eqref{eq:mu_sig_finite}, the points $(k,t,\pi)\in\Lambda$ such that $k\leq n$ and $\pi_{|[n]^2} \neq (\pi_{\mathrm{R}}, \mathbf{1})_{|[n]^2}$ can be numbered
\[ (k^n_i, t^n_i, \pi^n_i, i\geq 1) \quad \text{with } t^n_1 < t^n_2 < \ldots \quad \text{and } t^n_i \xrightarrow[i\to\infty]{} \infty. \]
Fix any initial value $\pi_0 \in\partitemb{\infty}$.
Let us define a process $(\Pi^n_i, i\geq 0)$ with values in $\partitemb{[n]}$, by $\Pi^n_0 = (\pi_0)\restr{n}$ and by induction, conditional on $\Pi^n_i = (\zeta, \xi)$:
\begin{itemize}
\item if $\xi$ has less than $k^n_{i+1}$ blocks, then set $\Pi^n_{i+1} := \Pi^n_i$
\item if $\xi$ has a $k^n_{i+1}$-th block, say $B$, then let $\tau:B\to [n]^2$ be the injection such that $\tau(k) = (i,j)$ iff $k\in B$ is the $j$-th element of the $i$-th block of $\zeta_{|B}$. 

Then define $\Pi^n_{i+1}$ as the only element $\pi\in\partitemb{n}$ such that $\pi\preceq \Pi^n_i$, $\pi_{|B} = (\pi^n_i)^{\tau}$ and $\pi_{|[n]\setminus B} = (\Pi^n_i)_{|[n]\setminus B}$.
\end{itemize}
Now we define the continuous-time processes $(\Pi^n(t), t\geq 0)$ by
\[ \Pi^n(t) := \Pi^n_i \quad\text{ iff } t\in [t^n_{i-1}, t^n_i). \]

\begin{lemma} \label{lemma:poisson_construct}
The processes $\Pi^n$ built from this Poissonian construction are consistent in the sense that we have for all $m\geq n\geq 1$ and $t\geq 0$,
\[ \Pi^m(t)\restr{n} = \Pi^n(t). \]
Therefore, for all $t\geq 0$, there is a unique random variable $\Pi(t)$ with values in $\partitemb{\infty}$ such that $\Pi(t)\restr{n} = \Pi^n(t)$ for all $n$, and the process $(\Pi(t), t\geq 0)$ is a strongly exchangeable decreasing Markov process with the outer branching property whose characteristic $\monoline$-invariant measure is $\mu$.
\end{lemma}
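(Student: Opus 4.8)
The plan is to prove the statement in three stages. First, establish the consistency relation $\Pi^m(t)\restr{n}=\Pi^n(t)$ for all $m\ge n\ge 1$ and $t\ge 0$ by induction on the successive jump times of $\Pi^m$. Second, deduce from the inverse-limit identity $\partitemb{\infty}=\underleftarrow{\lim}\,\partitemb{n}$ that a process $\Pi$ with $\Pi(t)\restr{n}=\Pi^n(t)$ for all $n$ exists, and check that it is decreasing with càdlàg paths. Third, show that each $\Pi^n$ is a continuous-time Markov chain whose transition rates are exactly those attached to $\mu$ via Propositions~\ref{prop:outer_branching} and~\ref{prop:kappa_to_mu}, so that $\Pi$ has the projective Markov property and is therefore, by Proposition~\ref{prop:projective_markov_kernel} and the equivalence of Section~\ref{sec:strong_exch_markov_process}, a strongly exchangeable Markov process with the outer branching property and characteristic measure $\mu$.

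For the first stage, note that for fixed $n$ the atoms $(k,t,\pi)\in\Lambda$ governing $\Pi^n$ (those with $k\le n$ and $\pi_{|[n]^2}\neq(\pi_{\mathrm R},\mathbf{1})_{|[n]^2}$) have finite total rate by~\eqref{eq:mu_sig_finite}, so the jump times of $\Pi^n$ are a.s.\ locally finite, and since the level-$n$ relevant atoms form a subcollection of the level-$m$ ones it suffices to run the induction over the jump times of $\Pi^m$. The base case is $((\pi_0)\restr{m})\restr{n}=(\pi_0)\restr{n}$. For the step, let a level-$m$ jump occur at time $t$ through an atom $(k,t,\pi)$, write $\Pi^m(t-)=(\zeta,\xi)$ and let $B$ be the $k$-th block of $\xi$. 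The two combinatorial facts that make everything go through are: (a) since blocks are ordered by their minima one has $\min B\ge k$, so if $B\cap[n]\neq\emptyset$ then $\min B\le n$ and all of the first $k$ blocks of $\xi$ meet $[n]$; hence the $k$-th block of $\xi\restr{n}$ is exactly $B\cap[n]$, and, applying the same observation to the blocks of $\zeta_{|B}$ and to positions inside them, the labelling injection $\tau$ used at level $n$ is the restriction to $B\cap[n]$ of the one used at level $m$; (b) that level-$m$ injection maps $B\cap[n]$ into $[n]^2$, because an $x\le n$ lies in the $a$-th block of $\zeta_{|B}$ with $a\le\min(\text{that block})\le n$ and in position $b\le x\le n$. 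If $B\cap[n]=\emptyset$, nothing on $[n]$ moves at either level (at level $n$ the atom, if relevant, sees fewer than $k$ blocks). If $B\cap[n]\neq\emptyset$ and the atom is also relevant at level $n$, fact~(a) shows $\Pi^m(t)\restr{n}$ is obtained from $\Pi^n(t-)$ by exactly the level-$n$ jump rule for this atom; if instead $\pi_{|[n]^2}=(\pi_{\mathrm R},\mathbf{1})_{|[n]^2}$, fact~(b) together with the identities $(\pi^{\tau})_{|B\cap[n]}=(\zeta_{|B},\mathbf{1}_B)_{|B\cap[n]}$ (both being $(\pi_{\mathrm R},\mathbf{1})$ pulled back along the relevant labelling of $B\cap[n]$) shows $\Pi^m(\cdot)\restr{n}$ does not move. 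One also checks here, using the first line of~\eqref{eq:mu_sig_finite} (so $\mu$-a.e.\ $\pi\preceq(\pi_{\mathrm R},\mathbf{1})$, equivalently $\pi^{\tau}\preceq(\zeta_{|B},\mathbf{1}_B)$), that the unique $\pi\preceq\Pi^n_i$ prescribed by the construction exists — which is what keeps the paths decreasing.

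The second stage is then immediate: consistency gives, for each $t$, a unique $\Pi(t)\in\partitemb{\infty}$ projecting on $[n]$ to $\Pi^n(t)$; monotonicity of each $\Pi^n$ passes to $\Pi$, and right-continuity of $\Pi$ in $d$ and existence of its left limits follow coordinatewise from the corresponding properties of the finitely-jumping chains $\Pi^n$ (whose left limits $\Pi^n(t-)$ are again consistent). For the third stage, each $\Pi^n$ is driven by a Poisson process, so by the memoryless property its next transition out of a state $(\zeta,\xi)$ is a deterministic function of $(\zeta,\xi)$ and the next relevant atom, and its holding times are exponential; since relevant atoms are locally finite, $\Pi^n$ is a genuine continuous-time Markov chain on the finite set $\partitemb{n}$. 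Computing its transition rate from $\pi_1=(\zeta,\xi)$ to $\pi_2$ by summing, over $k$ at most the number of blocks of $\xi$, the $\mu$-mass of the $\pi$'s whose jump rule sends $\pi_1$ to $\pi_2$, and using the $\monoline$-invariance of $\mu$ to reorganize the sum, one recovers $K_{\pi_0}(\pi\restr{n}=\pi_2)$ for $K$ the kernel built from $\mu$ through Propositions~\ref{prop:outer_branching} and~\ref{prop:kappa_to_mu}; in particular these rates are invariant under relabelling $[n]$, so $\Pi^n$, hence $\Pi$, is exchangeable. By Proposition~\ref{prop:projective_markov_kernel}, $\Pi$ is Markov with characteristic kernel $K$; being exchangeable, càdlàg and projective-Markov it is strongly exchangeable, its characteristic $\monoline$-invariant measure is $\mu$ by the rate computation, and the outer branching property holds because every atom modifies $\Pi(t-)$ within a single block of its outer partition.

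I expect the main obstacle to be the bookkeeping of the first stage: matching block labels and the two labelling injections across the levels $n$ and $m$, and splitting the analysis according to whether an atom is visible at level $n$ and whether $B\cap[n]$ is empty, all rests on facts~(a) and~(b) above, which must be stated and used carefully; once they are in place, stages two and three are routine.
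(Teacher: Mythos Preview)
Your proposal is correct and follows essentially the same approach as the paper's proof. The only notable differences are cosmetic: you run the consistency induction directly for general $m\ge n$ (with the case split according to whether $B\cap[n]=\emptyset$), whereas the paper does the single step $n\to n+1$ (splitting according to whether $t^{n+1}_1=t^n_1$, and in the other case whether $k^{n+1}_1=n+1$ or $\pi_{|[n]^2}$ is trivial) and then appeals to the strong Markov property of $\Lambda$; your facts~(a) and~(b) are exactly the combinatorial observations $\tau^{n+1}_{|B^n}=\tau^n$ and $\tau(k)\in[n]^2$ for $k\le n$ that the paper uses. Your third stage is sketchier than the paper's---it writes the rate explicitly as $q^n_{\pi_0,\pi_1}=\mu\big((\pi_{|\im\tau})^{\tau}=(\pi_1)_{|B}\big)$ and then identifies $\kappa_{\zeta}=\mu^{\tau}$ directly---but the content and the use of $\monoline$-invariance are the same.
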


\begin{proof}
Choose a number $n\in\N$ and consider the variable $(k^{n+1}_1, t^{n+1}_1, \pi^{n+1}_1)$.
It is clear from the definition that $(\Pi^{n+1}_0)\restr{n} = \Pi^n_0$.
Now let us show that $(\Pi^{n+1}_1)\restr{n} = \Pi^n(t^{n+1}_1)$.


We distinguish two cases:\\
\textbf{1)} If $t^{n+1}_1 = t^n_1$, then we have necessarily $k^{n+1}_1 =k^{n}_1 \leq n$ and $(\pi^{n+1}_1)_{|[n]^2} =(\pi^{n}_1)_{|[n]^2} \neq (\pi_{\mathrm{R}}, \mathbf{1})_{|[n]^2}$.
Let us write $\Pi^{n+1}_0 = (\zeta^{n+1}, \xi^{n+1})$ and $\Pi^n_0 = (\zeta^n, \xi^n)$.
Since $(\Pi^{n+1}_0)\restr{n} = \Pi^n_0$, it is clear that the $k^{n}_1$-th block of $\xi^{n+1}$ includes the $k^{n}_1$-th block of $\xi^n$, and may at most contain one other element, the number $n+1$.
In other words we have
\[ B^{n+1}\cap [n] = B^n, \]
where $B^{n+1}$ and $B^n$ denote those two blocks.
Now let us write $\tau^{n+1}, \tau^n$ for the respective injections in $\N^2$ defined in the construction.
Because we defined the injections according to the ordering of the blocks of $\zeta$ and with the natural order on $\N$, it should be clear that we have
\[ \tau^{n+1}_{|B^n} = \tau^{n}. \]
Therefore we deduce $((\pi^{n}_1)^{\tau^{n+1}})_{|B^{n}} = (\pi^{n}_1)^{\tau^n}$, which allows us to conclude $(\Pi^{n+1}_1)\restr{n} = \Pi^n_1 = \Pi^n(t^{n+1}_1)$.

\hspace*{-1.5ex}\textbf{2)} If $t^{n+1}_1 < t^n_1$, then we have to further distinguish two possibilities:\vspace{0.5ex} \\
{
\leftskip=1.5ex
\textbf{a)} $k^{n+1}_1 = n+1$.
In that case the $n+1$-th block of $\xi^{n+1}$ can either be empty or the singleton $\{n+1\}$.
Then by definition, we necessarily have $\Pi^{n+1}_1 = \Pi^{n+1}_0$, so we can conclude $(\Pi^{n+1}_1)\restr{n} = \Pi^{n}_0 = \Pi^n(t^{n+1}_1)$.\vspace{0.5ex} \\
\textbf{b)} $(\pi^{n+1}_1)_{|[n]^2} = (\pi_{\mathrm{R}}, \mathbf{1})_{|[n]^2}$.
In that case, let $B$ be the $k^{n+1}_1$-th block of $\xi$ and $\tau:B\to[n+1]^2$ the injective map defined in the construction.
By definition, we have $(\pi_{\mathrm{R}}, \mathbf{1})^{\tau} = (\zeta,\xi)_{|B}$.
Also by definition of $\tau$, for any $k\leq n$, we have $\tau(k) \in [n]^2$.
Therefore, we can conclude that
\[ ((\pi^{n+1}_1)^{\tau})_{|B\cap[n]} = ((\pi^{n+1}_1)_{|[n]^2})^{\tau_{|B\cap[n]}} = (\pi_{\mathrm{R}}, \mathbf{1})^{\tau_{|B\cap[n]}} = (\zeta,\xi)_{|B\cap[n]}. \]
This shows that $(\Pi^{n+1}_1)\restr{n} = (\Pi^{n+1}_0)\restr{n}$, which allows us to conclude $(\Pi^{n+1}_1)\restr{n} = \Pi^n_0 = \Pi^n(t^{n+1}_1)$.
}

Note that by induction and the strong Markov property of the Poisson point process $\Lambda$, this proves that $(\Pi^{n+1}_i)\restr{n} = \Pi^n(t^{n+1}_i)$ for all $i\geq 1$, so $\Pi^{n+1}(t)\restr{n} = \Pi^{n}(t)$ for all $t\geq 0$, which concludes the first part of the proof.

It remains to show that the process $(\Pi(t), t\geq 0)$ is a strongly exchangeable Markov process with the outer branching property, and whose characteristic $\monoline$-invariant measure is $\mu$.

First, notice that from the construction, we deduce immediately that for any $n$, $\Pi^n$ is a Markov chain, and at any jump time $t^n_i$, the partitions $\Pi^n_{i-1}$ and $\Pi^n_{i}$ differ at most on one block of $\xi$, where $\Pi^n_{i-1} = (\zeta, \xi)$.
Therefore the distribution of the Markov chain $\Pi^n$ is given by the transition rates of the form
\[ q^n_{\pi_0, \pi_1}, \]
with $\pi_0=(\zeta, \xi)\in\partitemb{\infty}$, and with $\pi_1 \preceq (\pi_0)\restr{n}$ such that, for some $B\in\xi\restr{n}$, $(\pi_1)_{|[n]\setminus B} = (\pi_0)_{|[n]\setminus B}$ and $(\pi_1)_{|B} \prec (\pi_0)_{|B}$.
Now for such $\pi_0, \pi_1$, write $\tau:B\to\N^2$ for the injection such that $\tau(k) = (i,j)$ iff $k$ is the $j$-th element of the $i$-th block of $\zeta_{|B}$.
By elementary properties of Poisson point processes we have 
\begin{equation}\label{eq:qtomu}
 q^n_{\pi_0, \pi_1} = 
\mu\left ((\pi_{|\im\tau})^{\tau} = (\pi_1)_{|B}\right ).
\end{equation}
This implies that $\Pi$ is a strongly exchangeable Markov process whose characteristic $\monoline$-invariant measure is $\mu$.
Indeed, recall from Section~\ref{sec:proj_markov} that since $\Pi$ satisfies the projective Markov property and is exchangeable (this is immediate from the $\monoline$-invariance of $\mu$), $\Pi$ is strongly exchangeable, with a characteristic kernel $K$ such that with the same notation as in~\eqref{eq:qtomu},
\begin{equation}\label{eq:Ktoq}
 K_{\pi_0}(\pi\restr{n} = \pi_1) = q^{n}_{\pi_0,\pi_1}.
\end{equation}
Now the outer branching property is immediately deduced from the construction of the process, where it is clear that at any jump time, at most one block of the coarser partition is involved.
Therefore by Proposition~\ref{prop:outer_branching}, the law of $\Pi$ is characterized by the simpler kernel $\kappa$ defined by $\kappa_{\zeta} = K_{(\zeta,\mathbf{1})}$, for $\zeta\in\partit{\infty}$.
Now putting this together with~\eqref{eq:Ktoq} and~\eqref{eq:qtomu}, since the coarsest partition $\mathbf{1}$ only contains one block $B=\N$, we have simply
\[ \kappa_{\zeta}(\pi\restr{n} = \pi_1) = \mu\left ((\pi^{\tau})\restr{n} = \pi_1\right ), \]
where $\tau$ is an injection such that $\pi_{\mathrm{R}}^{\tau} = \zeta$.
In other words with these definitions we have $\kappa_{\zeta} = \mu^{\tau}$ which shows that $\mu$ is the characteristic $\monoline$-invariant measure of the process $\Pi$.
\end{proof}

\section{Inner branching property, simple fragmentations} \label{sec:inner_branch}

In this section we consider simple fragmentation processes, that is we will assume both branching properties.
This will allow us to further the analysis of the $\monoline$-invariant measure $\mu$ which appears in Theorem~\ref{thm:outer_charac}.
To introduce the next theorem and main result of this article, let us first give some examples of simple nested fragmentation processes.

\subsection{Some examples}\label{sec:examples}

\paragraph{Pure erosion}

For $i\geq 1$, let $\xi_{\mathrm{out}}^{(i)}$ be the partition of $\N^2$ with two blocks such that one of them is the $i$-th line $\{i\}\times\N$, i.e.\
\[ \xi_{\mathrm{out}}^{(i)}:=\big\{\{i\}\times\N, \;\N^{2}\setminus (\{i\}\times\N)\big\} \]
and define the outer erosion measure $\mathfrak{e}^{\mathrm{out}} := \sum_{i\geq 1}\delta(\pi_{\mathrm{R}},\xi_{\mathrm{out}}^{(i)})$, where for readability we denote without subscripts $\delta(\zeta,\xi)$ the Dirac measure on $(\zeta,\xi)$.

Similarly, for $i,j\geq 1$, we define
\begin{gather*}
\zeta_{\mathrm{in}}^{(i,j)} := \big\{\{(i,j)\}\big\} \cup \big\{(\{i\}\times\N)\setminus\{(i,j)\}\big\} \cup \big\{ \{k\}\times\N, \,k\geq 1, k\neq i\big\}, \\
\xi_{\mathrm{in}}^{(i,j)} := \big\{ \{(i,j)\}, \; \N^{2}\setminus\{(i,j)\} \big\},
\end{gather*}
and the inner erosion measures 
\[\mathfrak{e}^{\mathrm{in},1} := \sum_{i,j\geq 1}\delta(\zeta_{\mathrm{in}}^{(i,j)},\mathbf{1}) \quad \text{ and } \quad \mathfrak{e}^{\mathrm{in},2} := \sum_{i,j\geq 1}\delta(\zeta_{\mathrm{in}}^{(i,j)},\xi_{\mathrm{in}}^{(i,j)}).\]

Now, given three real numbers $c_{\mathrm{out}}, c_{\mathrm{in},1}, c_{\mathrm{in},2}\geq 0$, the $\monoline$-invariant measure $\mu=c_{\mathrm{out}}\mathfrak{e}^{\mathrm{out}}+c_{\mathrm{in},1}\mathfrak{e}^{\mathrm{in},1}+c_{\mathrm{in},2}\mathfrak{e}^{\mathrm{in},2}$ clearly satisfies~\eqref{eq:mu_sig_finite}, so by Theorem~\ref{thm:outer_charac} there exists a fragmentation process having $\mu$ as $\monoline$-invariant measure.

From the construction, we see that the rates of such a process can be described informally as follows:
\begin{itemize}
\item any inner block erodes out of its outer block at rate $c_{\mathrm{out}}$, i.e.\ it does not fragment but forms, on its own, a new outer block.
\item any integer erodes out of its inner block at rate $c_{\mathrm{in},1}$, forming a singleton inner block, within the same outer block as its parent.
\item any integer erodes out of its inner and outer block at rate $c_{\mathrm{in},2}$, forming singleton inner and outer blocks.
\end{itemize}
See Figure~\ref{fig:erosion} for a schematic representation of each erosion event.

\def\figheight{10em} 

\begin{figure}[ht]
\subcaptionbox{Outer erosion \label{fig:erosion_sub_out}}{\includegraphics[height=\figheight]{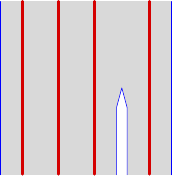}}
\hfill
\subcaptionbox{Inner erosion \label{fig:erosion_sub_in1}}{\includegraphics[height=\figheight]{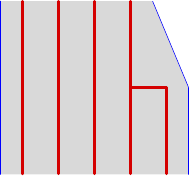}}
\hfill
\subcaptionbox{Inner erosion with creation of a new species \label{fig:erosion_sub_in2}} {\includegraphics[height=\figheight]{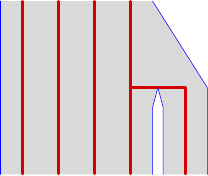}}
\captionsetup{justification=raggedright}
\caption{Pure erosion events.
  In each figure a unique branching event is shown, where the top part of figures represents an outer block -- depicted by a gray ``tube'' -- containing four inner blocks -- the four red lines within that tube -- and the bottom part represents the outcome of the event, to be understood as: \\
  \smallskip(\subref{fig:erosion_sub_out}) the fourth inner block -- call it $B$ -- erodes out from the outer block, creating a new outer block also equal to $B$; \\
  \smallskip(\subref{fig:erosion_sub_in1}) a singleton -- say $i\in B$ -- of the fourth inner block erodes out of $B$, creating a fifth inner block $\{i\}$, but the outer block remains unchanged; \\
  \smallskip(\subref{fig:erosion_sub_in2}) a singleton -- say $i\in B$ -- of the fourth inner block erodes out of both its inner and outer block thus forming two singleton inner and outer blocks equal to $\{i\}$.
} \label{fig:erosion}
\captionsetup{justification=centering}
\end{figure}

\paragraph{Outer dislocation}

Recall the definition of the space of mass partitions $\mathbf{s}=(s_1, s_2, \ldots)\in\masspart$ and of the measures $\rho_{\mathbf{s}}$ from Section~\ref{sec:univariate_results}.
We define in a similar way, a collection of probability measure $\widehat{\rho}_{\mathbf{s}}$ on $\partitemb{\infty}$, by constructing $\pi=(\zeta,\xi)\sim\widehat{\rho}_{\mathbf{s}}$ with the following so-called paintbox procedure:
\begin{itemize}
\item for $k\geq 0$, let $t_k := \sum_{k'=1}^{k}s_{k'}$, with $t_0=0$ by convention.
\item let $U_1,U_2,\ldots$ be a sequence of i.i.d.\ uniform r.v.\ on $[0,1]$ and define the random partition $\xi$ on $\N^{2}$ by
\[ (i,j)\sim^{\xi}(i',j') \iff i=i' \text{ or } U_i,U_{i'} \in [t_k, t_{k+1})\text{ for a unique }k\geq 0. \]
\item $\widehat{\rho}_{\mathbf{s}}$ is now defined to be the distribution of the random nested partition $\pi=(\pi_{\mathrm{R}},\xi)$.
\end{itemize}
Now for $\nu_{\mathrm{out}}$ a measure on $\masspart$ satisfying~\eqref{eq:nu_condition_simple}, we define
\[ \widehat{\rho}_{\nu_{\mathrm{out}}}(\point) := \int_{\masspart}\nu_{\mathrm{out}}(\dd \mathbf{s}) \,\widehat{\rho}_{\mathbf{s}}(\point). \]

It is straight-forward to check that $\widehat{\rho}_{\nu_{\mathrm{out}}}$ is an $\monoline$-invariant measure measure on $\partitemb{\N^{2}}$ satisfying~\eqref{eq:mu_sig_finite}, so there exists a fragmentation process having $\widehat{\rho}_{\nu_{\mathrm{out}}}$ as $\monoline$-invariant measure.

In intuitive terms, such a process can be described by saying that the outer blocks independently dislocate \emph{around their inner blocks} with \textbf{outer dislocation rate} $\nu_{\mathrm{out}}$.
In a dislocation event, inner blocks are unchanged, and they are indistinguishable.
By construction, each newly created outer block ``picks'' a given frequency of inner blocks among those forming the original outer block (see Figure~\ref{fig:dout}).

\begin{figure}[ht]
\centering
\includegraphics[height=\figheight]{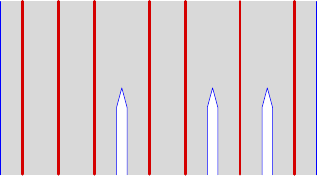}
\captionsetup{justification=raggedright}
\caption{Outer dislocation event.
  Here the initial outer block $B$ contains seven inner blocks, and at the branching event splits into new outer blocks $B_i, i \geq 1$, to which a mass $s_i\geq 1$ is assigned.
  The inner blocks are unchanged and each of them independently picks a newly created outer block with probability $s_i$, or create an outer block equal to itself with probability $1-\sum_i s_i$.
} \label{fig:dout}
\captionsetup{justification=centering}
\end{figure}

\paragraph{Inner dislocation}

The upcoming example is the more complex on our list, exhibiting simultaneous inner and outer fragmentations.
However, in construction it is very similar to the previous example, and it should pose no difficulties to get a good intuition of the dislocation mechanics.

Let us first formally define a space which will serve as an analog of the space of mass partitions $\masspart$.


\begin{definition}\label{def:masspartemb}
We define a particular space of ``bivariate mass partitions''
\[ \masspartemb \subset [0,1]^{\N}\times[0,1]^{\N^2}\times[0,1]\times[0,1]^{\N} \]
as the subset consisting of elements $\mathbf{p} = ((u_l)_{l\geq 1}, (s_{k,l})_{k,l\geq 1}, \bar{u}, (\bar{s}_k)_{k\geq 1})$ satisfying the following conditions.
\begin{equation}\label{eq:def_masspartemb}
\begin{gathered}
u_1\geq u_2\geq \ldots \text{ and }\textstyle\sum_{l}u_l \leq \bar{u},\\
\forall k\geq 1, \, s_{k,1}\geq s_{k,2}\geq \ldots \text{ and } \textstyle\sum_{l}s_{k,l} \leq \bar{s}_k,\\
\bar{s}_1 \geq \bar{s}_2 \geq \ldots,\\
\bar{u} + \textstyle\sum_{k}\bar{s}_k \leq 1,\\
\text{if }\bar{s}_k = \bar{s}_{k+1}, \text{ then } (l_0 = \inf\{l\geq 1, s_{k,l}\neq s_{k+1, l}\} <\infty) \Rightarrow (s_{k,l_0} > s_{k+1, l_0}).
\end{gathered}
\end{equation}
\end{definition}
Note that $\masspartemb$ is a compact space with respect to the product topology since it is a closed subset of the compact space $[0,1]^{\N}\times[0,1]^{\N^2}\times[0,1]\times[0,1]^{\N}$.
Therefore considering this topology, we will have no trouble considering measures on $\masspartemb$.

Now, given a fixed $i\geq 1$ and $\mathbf{p} = \left ((u_l)_{l\geq 1}, (s_{k,l})_{k,l\geq 1}, \bar{u}, (\bar{s}_k)_{k\geq 1}\right )\in\masspartemb$, one can define a random element $\pi^{(i)} = (\zeta^{(i)}, \xi^{(i)}) \in\partitemb{\N^{2}}$ with the following paintbox procedure: \label{page:paintbox_proc}
\begin{itemize}
\item for $k\geq 0$, define $\bar{t}_k = \bar{u} + \sum_{k'=1}^{k} \bar{s}_{k'}$.
\item for $l\geq 0$, define $t_{\star,l} = \sum_{l'=1}^{l} u_{l'}$.
\item for $k\geq 1$ and $l\geq 0$, define $t_{k,l} = \bar{t}_{k-1} + \sum_{l'=1}^{l} s_{k,l'}$.
\item write $\pi_0 = (\zeta_0, \xi_0)$ for the unique element of $\partitemb{[0,1]}$ such that the non-dust blocks of $\xi_0$ are
\[ [0,\bar{u}) \text{ and } [\bar{t}_{k-1}, \bar{t}_{k}), \; k\geq 1, \]
and such that the non-singleton blocks of $\zeta_0$ are
\[ [t_{\star, l-1},t_{\star, l}),\; l\geq 1 \text{ and } [t_{k,l-1}, t_{k,l}), \; k,l\geq 1. \]
\item let $(U_j, j\geq 1)$ be a i.i.d.\ sequence of uniform r.v.\ on $[0,1]$.
\item define the random element $\pi^{(i)}\in\partitemb{\N^{2}}$ as the unique element $\pi^{(i)} = (\zeta^{(i)}, \xi^{(i)}) \preceq (\pi_{\mathrm{R}}, \mathbf{1})$ such that
\begin{itemize}
\item $(\zeta^{(i)}, \xi^{(i)})_{|(\N\setminus\{i\})\times\N} = (\pi_{\mathrm{R}}, \mathbf{1})_{|(\N\setminus\{i\})\times\N}$, i.e.\ only the $i$-th row may dislocate.
\item On the $i$-th row, we have
\[ (i,j)\sim^{\zeta^{(i)}}(i,j') \iff U_j \sim^{\zeta_0} U_{j'}, \]
\[ (i,j)\sim^{\xi^{(i)}}(i,j') \iff U_j \sim^{\xi_0} U_{j'}, \]
and also
\[ (i,j)\sim^{\xi^{(i)}}(i+1,1) \iff U_j \in [0,\bar{u}),  \]
where it should be noted that $(i+1,1)$ is an element of the block of $\xi^{(i)}$ that contains $(\N\setminus\{i\})\times\N$.
\end{itemize}
See figure \ref{fig:paintbox} for a representation of the bivariate paintbox process.
In words, $\pi^{(i)}$ is a random nested partition such that the outer partition $\xi^{(i)}$ has a ``distinguished block'' containing $(\N\setminus\{i\})\times\N$, which also contains a proportion $\bar{u}$ of elements of the $i$-th row.
Other non-singleton blocks of $\xi^{(i)}$ can be indexed by $k\geq 1$, each containing a proportion $\bar{s}_k$ of elements of the $i$-th row.
The blocks of the inner partition $\zeta^{(i)}$ are the entire rows, except for the $i$-th row where non-singleton blocks can be indexed by $(\star, l)$ and $(k,l)$ for $k,l\geq 1$, each respectively containing a proportion $u_l$ or $s_{k,l}$ of elements of the $i$-th row.
As the notation suggests, inner blocks with frequency $s_{k,l}$ (resp.\ $u_l$) are included in the outer block with frequency $\bar{s}_k$ (resp.\ $\bar{u}$) on the $i$-th row.
%
\end{itemize}
\begin{figure}[ht]
  %
  %
  \centering
  \includegraphics[width=.9\linewidth]{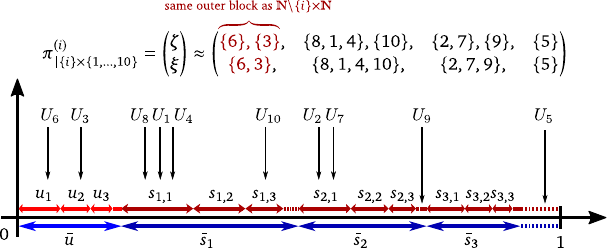}
  \caption{Paintbox construction of $\pi^{(i)}$}
  \label{fig:paintbox}
\end{figure}

The distribution of $\pi^{(i)}$ obtained with this construction is a probability on $\partitemb{\N^{2}}$ that we denote $\widetilde{\rho}^{(i)}_{\mathbf{p}}$.
We finally define
\[ \widetilde{\rho}_{\mathbf{p}} = \sum_{i\geq 1} \widetilde{\rho}^{(i)}_{\mathbf{p}}. \]
It should be clear from the exchangeability of the sequence $(U_j, j\geq 1)$ that $\widetilde{\rho}_{\mathbf{p}}$ is $\monoline$-invariant.

Now consider a measure $\nu_{\mathrm{in}}$ on $\masspartemb$ satisfying
\begin{equation}\label{eq:nu_condition}
\nu_{\mathrm{in}}(\{\mathbf{1}\}) = 0, \text{ and }\; \int_{\masspartemb} (1 - u_1) \, \nu_{\mathrm{in}}(\dd \mathbf{p}) < \infty,
\end{equation}
where $\mathbf{1}\in \masspartemb$ is defined as the unique element with $u_1=1$.
Similarly as in the previous example, we define
\[ \widetilde{\rho}_{\nu_{\mathrm{in}}}(\point) = \int_{\masspartemb}\widetilde{\rho}_{\mathbf{p}}(\point) \,\nu_{\mathrm{in}}(\dd \mathbf{p}). \]

It is again straight-forward to check that $\widehat{\rho}_{\nu_{\mathrm{in}}}$ is an $\monoline$-invariant measure measure on $\partitemb{\N^{2}}$ satisfying~\eqref{eq:mu_sig_finite}, so there exists a fragmentation process having $\widehat{\rho}_{\nu_{\mathrm{in}}}$ as $\monoline$-invariant measure.

In intuitive terms (see Figure~\ref{fig:din} for a picture), such a process can be described by saying that the inner blocks independently dislocate with \textbf{inner dislocation rate} $\nu_{\mathrm{in}}$.
In a dislocation event, new inner blocks are formed, each with a given proportion of the original block, and regroup, either in the original outer block (with a total proportion $\bar{u}$ with respect to the original inner block) or in newly created outer blocks.

\begin{figure}[ht]
\centering
\includegraphics[height=\figheight]{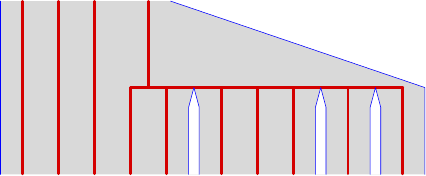}
\captionsetup{justification=raggedright}
\caption{Inner dislocation event.
  Here one of the initial inner blocks splits in blocks with frequencies given by $u_l, l\geq 1$ and $s_{k,l}, k,l \geq 1$.
  The blocks with frequencies $u_l$ remain in the original outer block, while for every $k$, the blocks with frequencies $s_{k,l}$ form a new outer block.
  There may be dust creation, in the original outer block with frequency $\bar u - \sum_lu_l$, in the newly created outer blocks with frequencies $\bar s_k - \sum_ls_{k,l}$ and dust outer blocks with frequency $1 - \bar u - \sum_k \bar s_k$.
} \label{fig:din}
\captionsetup{justification=centering}
\end{figure}


\paragraph{A combination of the above}

The mechanisms we discussed in the three proposed examples can be added in a parallel way, each event arising at its own independent rate and events from distinct mechanisms arising at distinct times.
More precisely, for a set of erosion coefficients $c_{\mathrm{out}},c_{\mathrm{in},1},c_{\mathrm{in},2}\geq 0$, an outer dislocation measure $\nu_{\mathrm{out}}$ on $\masspart$ satisfying~\eqref{eq:nu_condition_simple} and an inner dislocation measure $\nu_{\mathrm{in}}$ on $\masspartemb$ satisfying~\eqref{eq:nu_condition}, the measure
\[ \mu :=c_{\mathrm{out}}\mathfrak{e}^{\mathrm{out}}+c_{\mathrm{in},1}\mathfrak{e}^{\mathrm{in},1}+c_{\mathrm{in},2}\mathfrak{e}^{\mathrm{in},2} +\widehat{\rho}_{\nu_{\mathrm{out}}}+ \widetilde{\rho}_{\nu_{\mathrm{in}}}  \]
is a valid $\monoline$-invariant measure on $\partitemb{\N^{2}}$ satisfying~\eqref{eq:mu_sig_finite}, and thus corresponds to a fragmentation process exhibiting simultaneously all the discussed mechanisms at the rates described above.

The main result of this article is to prove that any nested simple fragmentation process admits such a representation.

\subsection{Characterization of simple nested fragmentations}

\begin{theorem}\label{thm:inner_charac} 
Let $\Pi = (\Pi(t), t\geq 0) = ((\zeta(t), \xi(t)), t\geq 0)$ be a strongly exchangeable Markov process with values in $\partitemb{\infty}$ and decreasing càdlàg sample paths.
Suppose that $\Pi$ is \textbf{simple}, that is it satisfies the outer and inner branching properties.
Then there are
\begin{itemize}
\item an outer erosion coefficient $c_{\mathrm{out}}\geq 0$ and two inner erosion coefficient $c_{\mathrm{in},1},c_{\mathrm{in},2}\geq 0$;
\item an outer dislocation measure $\nu_{\mathrm{out}}$ on $\masspart$ satisfying~\eqref{eq:nu_condition_simple};
\item an inner dislocation measure  $\nu_{\mathrm{in}}$ on $\masspartemb$ satisfying~\eqref{eq:nu_condition};
\end{itemize}
such that the $\monoline$-invariant measure $\mu$ of the process can be written
\[ \mu =c_{\mathrm{out}}\mathfrak{e}^{\mathrm{out}}+c_{\mathrm{in},1}\mathfrak{e}^{\mathrm{in},1}+c_{\mathrm{in},2}\mathfrak{e}^{\mathrm{in},2} +\widehat{\rho}_{\nu_{\mathrm{out}}}+ \widetilde{\rho}_{\nu_{\mathrm{in}}}  \]
\end{theorem}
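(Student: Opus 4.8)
The plan is to analyze the $\monoline$-invariant measure $\mu$ on $\partitemb{\N^2}$ by decomposing it according to the ``type'' of jump it encodes. First I would recall from Theorem~\ref{thm:outer_charac} that $\mu$ is supported on $\{\pi \preceq (\pi_{\mathrm R}, \mathbf 1)\}$ and satisfies the $\sigma$-finiteness condition~\eqref{eq:mu_sig_finite}. The inner branching property, transported through the kernel correspondence of Proposition~\ref{prop:outer_branching} and the bijection of Proposition~\ref{prop:kappa_to_mu}, translates into the statement that $\mu$-almost every $\pi = (\zeta,\xi)$ differs from $(\pi_{\mathrm R}, \mathbf 1)$ in the inner coordinate $\zeta$ on \emph{at most one row}; combined with outer branching (already built into the fact that we are on $\partitemb{\N^2}$ via $\mu = \kappa_{\pi_0}^\sigma$), one gets that a generic $\pi$ in the support either changes only $\xi$ (outer-type jumps: $\zeta = \pi_{\mathrm R}$, but $\xi \neq \mathbf 1$) or changes $\zeta$ on a single row $i$ (inner-type jumps, possibly also splitting $\xi$ on that row). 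So I would write $\mu = \mu_{\mathrm{out}} + \mu_{\mathrm{in}}$ where $\mu_{\mathrm{out}} := \mu(\,\cdot \cap \{\zeta = \pi_{\mathrm R}\} \cap \{\xi \neq \mathbf 1\})$ and $\mu_{\mathrm{in}} := \mu(\,\cdot \cap \{\zeta \neq \pi_{\mathrm R}\})$, each again $\monoline$-invariant, and handle the two pieces separately.

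For the outer part $\mu_{\mathrm{out}}$: here $\zeta = \pi_{\mathrm R}$ is frozen and only $\xi$ moves, with $\xi$ a partition of $\N^2$ that is coarser than $\pi_{\mathrm R}$, hence identified with a partition of the row-index set $\N$; $\monoline$-invariance reduces to exchangeability of that partition of $\N$ under injections. I would split off the ``single-row erodes'' atoms — the points where exactly one row leaves $\mathbf 1$ to form its own outer block — which by exchangeability must occur with a common intensity $c_{\mathrm{out}} \geq 0$ and give precisely $c_{\mathrm{out}} \mathfrak e^{\mathrm{out}}$; the remainder is an exchangeable measure on partitions of $\N$ with no ``erosion'' part, satisfying~\eqref{eq:mu_sig_finite}, so by Bertoin's representation (the univariate result recalled in Section~\ref{sec:univariate_results}, i.e.\ $\mu = c\mathfrak e + \rho_\nu$ with $\nu$ satisfying~\eqref{eq:nu_condition_simple}) it is $\widehat\rho_{\nu_{\mathrm{out}}}$ for a unique $\nu_{\mathrm{out}}$ on $\masspart$ satisfying~\eqref{eq:nu_condition_simple} — one just has to note that the paintbox on partitions of the row-set, relabelled as a nested partition with $\zeta = \pi_{\mathrm R}$, is exactly $\widehat\rho_{\mathbf s}$.

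For the inner part $\mu_{\mathrm{in}}$: by $\monoline$-invariance it is a sum over the distinguished row $i$ of copies of a single measure $\mu^{(1)}_{\mathrm{in}}$ on configurations where only the first row dislocates; $\mu_{\mathrm{in}} = \sum_i (\mu^{(1)}_{\mathrm{in}})^{\text{(shift to row }i)}$. Conditionally on the distinguished row, the data of such a $\pi$ is: how the (countably infinite) first row is split into inner blocks, and which of the resulting inner blocks stay in the old outer block $\N^2\setminus(\{1\}\times\N)$ versus forming new outer blocks (and how the new outer blocks group the new inner blocks). This is precisely a \emph{bivariate} exchangeable partition structure of $\N$ — exchangeable under injections of the row coordinate, with one distinguished block (the part of the row staying in the original outer block). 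I would apply the appropriate bivariate Kingman-type paintbox / de Finetti argument to show that any such exchangeable bivariate partition of $\N$ with a distinguished block has asymptotic frequencies $\mathbf p \in \masspartemb$ — the ordering conditions in~\eqref{eq:def_masspartemb} being the canonical way of making the frequency data well-defined — and that conditionally on $\mathbf p$ its law is $\widetilde\rho^{(1)}_{\mathbf p}$; the single-integer erosion atoms $c_{\mathrm{in},1}\mathfrak e^{\mathrm{in},1}$ (integer leaves its inner block, outer block unchanged) and $c_{\mathrm{in},2}\mathfrak e^{\mathrm{in},2}$ (integer leaves both) are exactly the degenerate frequency configurations (``$u_1 = 1$'' or ``$\bar u = 0$, one point'' type) that the paintbox cannot produce, so they are peeled off first as two more exchangeability-forced rate constants. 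What remains is $\widetilde\rho_{\nu_{\mathrm{in}}}$ for a unique $\nu_{\mathrm{in}}$ on $\masspartemb$, and~\eqref{eq:mu_sig_finite} for $\mu$ translates into the integrability condition~\eqref{eq:nu_condition} (the $(1-u_1)$ controlling how often the first row is split nontrivially, plus $\nu_{\mathrm{in}}(\{\mathbf 1\}) = 0$ because $\mathbf 1$ corresponds to no change).

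The main obstacle I expect is the bivariate paintbox step for $\mu_{\mathrm{in}}$: proving that an exchangeable (under injections) random nested partition of $\N$ carrying a distinguished outer block has, almost surely, asymptotic frequencies realizing an element of $\masspartemb$, and that the conditional law given those frequencies is exactly $\widetilde\rho^{(1)}_{\mathbf p}$. This is the genuinely new ingredient — a two-level analogue of Kingman's theorem where the second level is further constrained to refine the first and one block is marked — and it requires (i) establishing existence of all the relevant asymptotic frequencies via a de Finetti / law-of-large-numbers argument on the directing random measure, (ii) checking the conditional law is the paintbox, and (iii) verifying that the canonical ordering/tie-breaking conventions in~\eqref{eq:def_masspartemb} make the map from laws to measures on $\masspartemb$ a bijection (so the representation is unique). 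Everything else — the decomposition into outer/inner pieces, the erosion atoms, the reduction of the outer piece to Bertoin's univariate theorem, and the bookkeeping translating~\eqref{eq:mu_sig_finite} into~\eqref{eq:nu_condition_simple} and~\eqref{eq:nu_condition} — is comparatively routine.
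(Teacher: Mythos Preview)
Your plan matches the paper's proof almost step for step: the same outer/inner decomposition of $\mu$, the same reduction of $\mu_{\mathrm{out}}$ to Bertoin's univariate theorem via the row-index partition, and the same identification of $\mu_{\mathrm{in}}$ (after fixing the fragmenting row) with an exchangeable measure on nested partitions of $\N$ with a distinguished outer block, followed by a bivariate Kingman-type paintbox (the paper's Lemma~\ref{lem:partitemb_asymptotic_frequencies}) and an erosion/dislocation split (Proposition~\ref{prop:mu_canonical_decomposition}). Your assessment that the bivariate paintbox is the genuinely new ingredient is exactly right.

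There is one step you pass over too quickly. When you write that on the inner part ``the data of such a $\pi$ is: how the first row is split into inner blocks, and which of the resulting inner blocks stay in the old outer block $\N^2\setminus(\{1\}\times\N)$ versus forming new outer blocks'', you are implicitly assuming that $\xi_{|(\N\setminus\{1\})\times\N} = \mathbf{1}$, i.e.\ that all the other rows remain in a single outer block. This does \emph{not} follow from either branching property: the outer branching property has already been spent in passing from $K$ to $\kappa$ to $\mu$ (there is only one outer block to begin with, so it imposes nothing further), and the inner branching property only constrains $\zeta$. The paper establishes this separately by a short $\sigma$-finiteness argument: if $a := \mu(\zeta_{|\{1\}\times\N}\neq\mathbf{1},\,(2,1)\nsim^\xi(3,1))$ were positive, then by $\monoline$-invariance the same mass $a$ would sit on $\{\zeta_{|\{i\}\times\N}\neq\mathbf{1},\,(2,1)\nsim^\xi(3,1)\}$ for every $i\geq 4$, and these events are $\mu$-a.e.\ disjoint by inner branching, contradicting $\mu(\pi_{|[3]^2}\neq(\pi_{\mathrm R},\mathbf 1)_{|[3]^2})<\infty$. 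Without this, your reduction to a distinguished-block structure on a single row does not go through, so you should insert it before invoking the bivariate paintbox.
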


The rest of Section~\ref{sec:inner_branch} consists in proving this result.

Let $ \mu $ be the $ \monoline $-invariant characteristic measure on $ \partitemb{\N^{2}} $ associated with $ \Pi $.
Recall that $ \pi_{\mathrm{R}} $ denotes the ``rows partition'', defined by
\[ \pi_{\mathrm{R}} = \big\{\{(i,j), \, j\geq 1\}, \; i\geq 1\big\}. \]
First, notice that the inner branching property implies that $ \mu $-a.e.\ we have
\[ \exists i \in \N, \quad \zeta_{|(\N\setminus\{i\}) \times \N} = (\pi_{\mathrm{R}})_{|(\N\setminus\{i\}) \times \N}, \]
where $\zeta$ is the first coordinate in the standard variable $\pi = (\zeta, \xi) \in\partitemb{\N^2}$.
This will enable us to decompose $ \mu $ further. 
Let us write
\begin{equation}\label{eq:def_mu_in_out}
\begin{gathered}
\mu_{\mathrm{out}} := \mu(\point\cap \{\zeta = \pi_{\mathrm{R}}\}),\\
\text{for }i\in\N,\quad \mu_{\mathrm{in},i} := \mu\left (\{\zeta_{|\{i\}\times \N} \neq \mathbf{1}_{\{i\} \times \N}\}\cap \point \right ), \\
\text{such that } \mu_{\mathrm{in}} := \mu(\point\cap \{\zeta \neq \pi_{\mathrm{R}}\}) = \textstyle\sum_{i\geq1} \mu_{\mathrm{in},i} \\
\text{and } \mu = \mu_{\mathrm{out}} + \mu_{\mathrm{in}}.
\end{gathered}
\end{equation}

On the event $ \{\zeta = \pi_{\mathrm{R}}\} $, we have 
\[ \xi = f(\xi^{\sigma}), \]
where $\sigma:\N\to\N^2$ is the injection $ i \mapsto (i,1) $, and $f:\partit{\infty}\to\partit{\N^2}$ is the map such that $(i,j) \sim^{f(\pi_0)} (i',j') \iff i\sim^{\pi_0}i'$.
By $\monoline$-invariance of $\mu$, the measure
\[ \widetilde{\mu}_{\mathrm{out}} := \mu\left (\{\zeta = \pi_{\mathrm{R}}\}\cap\{\xi^\sigma \in \point\}\right ) \]
is an exchangeable measure on $\partit{\infty}$, of which $\mu_{\mathrm{out}}$ is the push-forward by the application $(\pi_{\mathrm{R}}, f(\point))$.

Also, note that $\mu$ satisfies the $\sigma$-finiteness assumption~\eqref{eq:mu_sig_finite}, which implies that $\widetilde{\mu}_{\mathrm{out}}$ satisfies~\eqref{eq:mu_sig_finite_simple}, showing (see Section~\ref{sec:univariate_results}) that it can be decomposed
\[ \widetilde{\mu}_{\mathrm{out}} = c_{\mathrm{out}}\mathfrak{e} + \rho_{\nu_{\mathrm{out}}}, \]
where $c_{\mathrm{out}}\geq 0$ and $\nu_{\mathrm{out}}$ is a measure on $\masspart$ satisfying~\eqref{eq:nu_condition_simple}.
Thanks to our definitions, this immediately translates into
\[ \mu_{\mathrm{out}} = c_{\mathrm{out}}\mathfrak{e}^{\mathrm{out}}+\widehat{\rho}_{\nu_{\mathrm{out}}}, \]
and to prove Theorem~\ref{thm:inner_charac}, it only remains to show that we can write
\[ \mu_{\mathrm{in}} =\textstyle\sum_{i\geq1} \mu_{\mathrm{in},i} =c_{\mathrm{in},1}\mathfrak{e}^{\mathrm{in},1}+c_{\mathrm{in},2}\mathfrak{e}^{\mathrm{in},2} + \widetilde{\rho}_{\nu_{\mathrm{in}}}.  \]
To that aim, note that by exchangeability we have $\mu_{\mathrm{in},i} = \mu_{\mathrm{in},1}^{\tau_{1,i}}$ where $\tau_{1,i}:\N^2\to\N^2$ denotes the application swapping the first and $i$-th rows, so the application $\mu_{\mathrm{in},1}$ is sufficient to recover $\mu_{in}$ entirely.
Let us examine the distribution of $\xi$ under $\mu_{\mathrm{in},1}$.
We claim that $\mu$-a.e.\ on the event $\{\zeta_{|\{1\}\times \N} \neq \mathbf{1}_{\{1\} \times \N}\}$, that $\xi_{|(\N\setminus\{1\}) \times \N} = \mathbf{1}_{(\N\setminus\{1\}) \times \N}$.
Indeed, if this was not the case, we would have 
\[ a := \mu(\zeta_{|\{1\}\times \N} \neq \mathbf{1}_{\{1\} \times \N},\text{ and } (2,1) \nsim^{\xi} (3,1)) > 0. \]
Let us then show that in fact $a = 0$.
By $\monoline$-invariance of $\mu$, we have for any $i\geq 4$,
\[ a = \mu(\zeta_{|\{i\}\times \N} \neq \mathbf{1}_{\{i\} \times \N},\text{ and } (2,1) \nsim^{\xi} (3,1)), \]
but because of the inner branching property, we have seen that the events $\{\zeta_{|\{i\}\times \N} \neq \mathbf{1}_{\{i\} \times \N}\}$ have $\mu$-negligible intersections.
Now we have
\begin{align*}
\infty > \mu(\pi_{|[3]^2} \neq (\pi_{\mathrm{R}}, \mathbf{1})_{|[3]^2}) 
&\geq \mu\left ( (2,1) \nsim^{\xi} (3,1)\right )\\
&\geq \mu\left (\textstyle\cup_{i\geq 4}\{\zeta_{|\{i\}\times \N} \neq \mathbf{1}_{\{i\} \times \N},\text{ and } (2,1) \nsim^{\xi} (3,1)\}\right )\\
&= \textstyle \sum_{i\geq 4} a.
\end{align*}
This shows that necessarily $a = 0$.

Now in order to further study $\mu_{\mathrm{in},1}$ we need to introduce exchangeable partitions on a space with a distinguished element.
Results in that direction have been established by Foucart \cite{Fou11}, where distinguished exchangeable partitions are introduced and used to construct a generalization of $\Lambda$-coalescents modeling the genealogy of a population with immigration.
Here we need to define in a similar way distinguished partitions in our bivariate setting.
Informally, we will see that in a gene fragmentation, certain resulting gene blocks remain in a distinguished species block, that one can interpret as the mother species.
\begin{definition}
For $n\in\N\cup\{\infty\}$, we define $[n]_{\star} := [n]\cup\{\star\}$, where $\star$ is not an element of $\N$.
We define $\partitemb{n,\star}$ as the set of nested partitions $\pi=(\zeta, \xi)\in\partitemb{[n]_{\star}}$ such that $\star$ is isolated in the finer partition $\zeta$:
\[ \partitemb{n,\star} := \left \{ \pi=(\zeta, \xi)\in\partitemb{[n]_{\star}},\; \{\star\}\in\zeta\right  \}. \]
We define the action of an injection $\sigma : [n]\to[n]$ on an element $\pi\in\partitemb{n,\star}$ as the action of the unique extension $\widetilde \sigma : [n]_{\star}\to[n]_{\star}$ such that $\widetilde \sigma(\star) = \star$, and define \textbf{exchangeability} for measures on $\partitemb{n,\star}$ as invariance under the actions of such injections $\sigma:[n]\to[n]$.
\end{definition}

Let us come back to the decomposition of $\mu_{\mathrm{in},1}$.
We define an injection
\[ \tau : \begin{cases}
[\infty]_\star &\longrightarrow \N^2\\
j \in \N &\longmapsto (1,j)\\
\star &\longmapsto (2,1).
\end{cases} \]
Note that here we could have chosen any value $\tau(\star)=(i,j)$ with $i\geq 2$, since $\mu$-a.e.\ on the event $\{\zeta_{|\{1\}\times \N} \neq \mathbf{1}_{\{1\} \times \N}\}$ those elements are contained in the blocks of $\zeta$ which do not fragment.
In intuitive terms, the argument above shows that when the first gene block undergoes fragmentation, it may create new gene blocks that will be distributed in (possibly) new species blocks, and (possibly) the distinguished ``mother species'' block, which is the unique species block that will contain all of the original gene blocks which do not fragment.
For this reason, on the event $\{\zeta_{|\{1\}\times \N} \neq \mathbf{1}_{\{1\} \times \N}\}$, we have $\mu$-a.e.\ the equality
\[ \pi = (\zeta, \xi) = g(\pi^\tau), \]
where $g:\partitemb{\infty,\star} \to \partitemb{\N^2}$ is a deterministic function which we can define by: $g(\pi_0)$ is the only $\pi\in\partitemb{\N^2}$ such that
\begin{align*}
&\pi^{\tau} = \pi_0,\quad \pi \preceq (\pi_{\mathrm{R}}, \mathbf{1}_{\N^2}) \\
\text{and }&\pi_{|(\N\setminus\{1\})\times\N} = (\pi_{\mathrm{R}}, \mathbf{1}_{\N^2})_{|(\N\setminus\{1\})\times\N}.
\end{align*}
Let us now write 
\begin{equation}\label{eq:def_mu_in_tilde}
 \widetilde{\mu}_{\mathrm{in}} := \mu_{\mathrm{in},1}(\pi^{\tau}\in\point). 
\end{equation}
Note that the push-forward of this exchangeable measure on $\partitemb{\infty,\star}$ by the application $g$ is $\mu_{\mathrm{in},1}$.

Also, note that the $\sigma$-finiteness assumption~\eqref{eq:mu_sig_finite} implies that $\widetilde{\mu}_{\mathrm{in}}$ satisfies
\begin{equation}\label{eq:mu_sig_finite_star}
\forall n\geq 1,\quad \widetilde{\mu}_{\mathrm{in}}(\{\pi_{|[n]_{\star}} \neq \pi_n\}) < \infty,
\end{equation}
where $\pi_n:=(\{\{\star\},[n]\},\, \mathbf{1}_{[n]_{\star}})$ denotes the coarsest partition on $\partitemb{n,\star}$.

We can summarize the previous discussion in the following lemma.

\begin{lemma}\label{lem:intermediate}
The characteristic $\monoline$-invariant measure $\mu$ of a simple nested fragmentation process in $\partitemb{\infty}$ 
can be decomposed
\[ \mu = c_{\mathrm{out}}\mathfrak{e}^{\mathrm{out}}+\widehat{\rho}_{\nu_{\mathrm{out}}}+ \mu_{\mathrm{in}}, \]
where $c_{\mathrm{out}}\geq 0$, $\nu_{\mathrm{out}}$ is a measure on $\masspart$ satisfying~\eqref{eq:nu_condition_simple}, and $\mu_{\mathrm{in}} := \mu(\point\cap \{\zeta \neq \pi_{\mathrm{R}}\})$.
Also, there exists an exchangeable measure $\widetilde{\mu}_{\mathrm{in}}$ on $\partitemb{\infty, \star}$ such that $\mu_{\mathrm{in}} = \sum_{i}\mu_{\mathrm{in},1}^{\tau_{1,i}}$, where
\begin{itemize}
\item $\mu_{\mathrm{in},1}$ is a measure on $\partitemb{\N^2}$, satisfying~\eqref{eq:mu_sig_finite_star}, which is the push-forward of $\widetilde{\mu}_{\mathrm{in}}$ by the map $g$ defined in the previous paragraph.
\item $\tau_{1,i}:\N^2\to\N^2$ is the bijection swapping the first row with the $i$-th row.
\end{itemize}
\end{lemma}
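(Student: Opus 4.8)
The plan is to assemble the lemma from the structural analysis that the inner branching property forces on the measure $\mu$. First I would record the infinitesimal consequence of the inner branching property: arguing exactly as in the proof of Proposition~\ref{prop:outer_branching} (pass to the restricted chains $\Pi^n$, use that each $\Pi^n$ is a Markov chain and let $n\to\infty$), the fact that at each jump at most one inner block fragments translates into the statement that $\mu$-a.e.\ there is a necessarily unique row index $i$ with $\zeta_{|(\N\setminus\{i\})\times\N} = (\pi_{\mathrm{R}})_{|(\N\setminus\{i\})\times\N}$. This justifies the decomposition $\mu = \mu_{\mathrm{out}} + \mu_{\mathrm{in}}$ with $\mu_{\mathrm{in}} = \sum_{i\geq 1}\mu_{\mathrm{in},i}$ as in~\eqref{eq:def_mu_in_out}, and by exchangeability of $\mu$ each $\mu_{\mathrm{in},i}$ is the image of $\mu_{\mathrm{in},1}$ under the row-swap bijection $\tau_{1,i}$, so only $\mu_{\mathrm{out}}$ and $\mu_{\mathrm{in},1}$ remain to be understood.

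For $\mu_{\mathrm{out}}$: on $\{\zeta = \pi_{\mathrm{R}}\}$ we have $\pi_{\mathrm{R}} \preceq \xi$, so $\xi$ is a union of whole rows and $\xi = f(\xi^\sigma)$ for $\sigma : i\mapsto(i,1)$ and $f$ the ``rows lift''. I would then set $\widetilde{\mu}_{\mathrm{out}} := \mu(\{\zeta = \pi_{\mathrm{R}}\}\cap\{\xi^\sigma\in\point\})$, check that the $\monoline$-invariance of $\mu$ makes it an exchangeable measure on $\partit{\infty}$, and that~\eqref{eq:mu_sig_finite} forces~\eqref{eq:mu_sig_finite_simple} for it (if $\xi^\sigma\restr{n}\neq\mathbf{1}_{[n]}$ then already $\pi_{|[n]^2}\neq(\pi_{\mathrm{R}},\mathbf{1})_{|[n]^2}$). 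The univariate decomposition recalled in Section~\ref{sec:univariate_results} then gives $\widetilde{\mu}_{\mathrm{out}} = c_{\mathrm{out}}\mathfrak{e} + \rho_{\nu_{\mathrm{out}}}$ with $c_{\mathrm{out}}\geq 0$ and $\nu_{\mathrm{out}}$ satisfying~\eqref{eq:nu_condition_simple}; pushing this forward through $(\pi_{\mathrm{R}}, f(\point))$ and unwinding the definitions of $\mathfrak{e}^{\mathrm{out}}$ and $\widehat{\rho}_{\nu_{\mathrm{out}}}$ yields $\mu_{\mathrm{out}} = c_{\mathrm{out}}\mathfrak{e}^{\mathrm{out}} + \widehat{\rho}_{\nu_{\mathrm{out}}}$.

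For $\mu_{\mathrm{in},1}$: the crux is the claim that $\mu$-a.e.\ on $\{\zeta_{|\{1\}\times\N}\neq\mathbf{1}\}$ one has $\xi_{|(\N\setminus\{1\})\times\N} = \mathbf{1}$, i.e.\ a fragmentation of the first inner block keeps every row other than the first inside a single outer block. I would prove this by contradiction: if $a := \mu(\zeta_{|\{1\}\times\N}\neq\mathbf{1},\ (2,1)\nsim^\xi(3,1)) > 0$, then $\monoline$-invariance gives the same value $a$ with any row index $i\geq 4$ in place of $1$; the inner branching property makes the events $\{\zeta_{|\{i\}\times\N}\neq\mathbf{1}\}$ pairwise $\mu$-negligible in intersection; summing over $i\geq 4$ then forces $\mu((2,1)\nsim^\xi(3,1)) = \infty$, contradicting the $\sigma$-finiteness~\eqref{eq:mu_sig_finite}. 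Granting the claim, all of rows $\neq 1$ lie in a single ``mother'' outer block together with the non-fragmenting part of row $1$, so for the injection $\tau:[\infty]_\star\to\N^2$ with $j\mapsto(1,j)$, $\star\mapsto(2,1)$, the map $\pi\mapsto\pi^\tau$ loses no information and there is a deterministic $g:\partitemb{\infty,\star}\to\partitemb{\N^2}$ with $\pi = g(\pi^\tau)$, $\mu_{\mathrm{in},1}$-a.e. I would then set $\widetilde{\mu}_{\mathrm{in}} := \mu_{\mathrm{in},1}(\pi^\tau\in\point)$, note that $\monoline$-invariance makes it an exchangeable measure on $\partitemb{\infty,\star}$ whose push-forward by $g$ is $\mu_{\mathrm{in},1}$, and deduce~\eqref{eq:mu_sig_finite_star} from~\eqref{eq:mu_sig_finite}. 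The main obstacle is precisely the ``mother species'' claim: it is the single place where the inner branching property and the $\sigma$-finiteness of $\mu$ combine to yield a genuinely new constraint, and everything else is bookkeeping and invocations of the univariate theory.
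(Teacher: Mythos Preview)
Your proposal is correct and follows essentially the same route as the paper's own argument: the same decomposition $\mu=\mu_{\mathrm{out}}+\sum_i\mu_{\mathrm{in},i}$, the same reduction of $\mu_{\mathrm{out}}$ to the univariate theorem via $\sigma:i\mapsto(i,1)$, and the same ``mother species'' claim for $\mu_{\mathrm{in},1}$ proved by the identical $\monoline$-invariance plus $\sigma$-finiteness contradiction. The only cosmetic slip is that the row index $i$ is only \emph{unique} on $\{\zeta\neq\pi_{\mathrm{R}}\}$ (on $\{\zeta=\pi_{\mathrm{R}}\}$ every $i$ works), but this does not affect the argument.
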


In the next section, we will develop tools to analyze and further decompose the measure $\widetilde{\mu}_{\mathrm{in}}$ into terms of erosion and dislocation.

\subsection{Bivariate mass partitions}

Recall our compact space of bivariate mass partitions defined in Definition~\ref{def:masspartemb},
\[ \masspartemb \subset [0,1]^{\N}\times[0,1]^{\N^2}\times[0,1]\times[0,1]^{\N}, \]
as the subset consisting of elements $\mathbf{p} = ((u_l)_{l\geq 1}, (s_{k,l})_{k,l\geq 1}, \bar{u}, (\bar{s}_k)_{k\geq 1})$ satisfying conditions~\eqref{eq:def_masspartemb}.

We wish to match exchangeable measures on $\partitemb{\infty,\star}$ and measures on $\masspartemb$, and to that aim we need some further definition.
We say that an element $\pi=(\zeta, \xi)\in \partitemb{\infty,\star}$ has \textbf{asymptotic frequencies} if $\zeta$ and $\xi$ have asymptotic frequencies, and we write
\[ \abs{\pi}^{\downarrow} = ((u_l)_{l\geq 1}, (s_{k,l})_{k,l\geq 1}, \bar{u}, (\bar{s}_k)_{k\geq 1})\in\masspartemb \]
for the unique (because of the ordering conditions in~\eqref{eq:def_masspartemb}) element satisfying:
\begin{itemize}
\item the block $B\in\xi$ containing $\star$ has asymptotic frequency $\abs{B} = \bar{u}$ and the decreasing reordering of the asymptotic frequencies of the blocks of $\zeta\cap B$ is the sequence $({u_l, l\geq 1})$.
\item for any other block $B\in\xi$ with a positive asymptotic frequency, there is a $k\in\N$ such that $\abs{B} = \bar{s}_k$ and the decreasing reordering of the asymptotic frequencies of the blocks of $\zeta\cap B$ is the sequence $(s_{k,l}, l\geq 1)$.
\item the mapping $B\mapsto k$ is injective, and for any $k$ such that $\bar{s}_k>0$, there is a block $B\in\xi$ such that $\abs{B}=\bar{s}_k$.
\end{itemize}

\subsection{A paintbox construction for nested partitions}

We first adapt the construction used in our third example of Section~\ref{sec:examples} to our new partition space $\partitemb{\infty,\star}$.
Note that if $\mathbf{p} = \left ((u_l)_{l\geq 1}, (s_{k,l})_{k,l\geq 1}, \bar{u}, (\bar{s}_k)_{k\geq 1}\right )\in\masspartemb$, then one can define a random element $\pi = (\zeta, \xi) \in\partitemb{\infty, \star}$ with a paintbox procedure very similar as the one described as an example on p.~\pageref{page:paintbox_proc}.
For the sake of readability, let us recall the notation and construction.
\begin{itemize}
\item for $k\geq 0$, define $\bar{t}_k = \bar{u} + \sum_{k'=1}^{k} \bar{s}_{k'}$.
\item for $l\geq 0$, define $t_{\star,l} = \sum_{l'=1}^{l} u_{l'}$.
\item for $k\geq 1$ and $l\geq 0$, define $t_{k,l} = \bar{t}_{k-1} + \sum_{l'=1}^{l} s_{k,l'}$.
\item write $\pi_0 = (\zeta_0, \xi_0)$ for the unique element of $\partitemb{[0,1]}$ such that the non-dust blocks of $\xi_0$ are
\[ [0,\bar{u}) \text{ and } [\bar{t}_k, \bar{t}_{k+1}), \; k\geq 1, \]
and such that the non-singleton blocks of $\zeta_0$ are
\[ [t_{\star, l-1},t_{\star, l}),\; l\geq 1 \text{ and } [t_{k,l-1}, t_{k,l}), \; k,l\geq 1. \]
\item let $(U_i, i\geq 1)$ be a i.i.d.\ sequence of uniform random variables on $[0,1]$ and define the random injection $\sigma:i\in\N\mapsto U_i \in [0,1]$.
\item finally define the random element $\pi\in\partitemb{\infty,\star}$ as the unique $\pi = (\zeta, \xi)$ such that $\pi_{|\N} = \pi_0^{\sigma}$, and the block of $\xi$ containing $\star$ is equal to:
\[ \{\star\}\cup\{i\geq 1, \; U_i<\bar{u}\}. \]
\end{itemize}
The distribution of $\pi$ obtained with this construction is a probability on $\partitemb{\infty,\star}$ that we denote $\bar{\rho}_{\mathbf{p}}$.
It is clear from the exchangeability of the sequence $(U_i, i\geq 1)$ that $\bar{\rho}_{\mathbf{p}}$ is exchangeable, and from the strong law of large numbers, that $\bar{\rho}_{\mathbf{p}}$-a.s., $\pi$ possesses asymptotic frequencies equal to $\abs{\pi}^{\downarrow} = \mathbf{p}$.
For a measure $\nu$ on $\masspartemb$, we will define a corresponding exchangeable measure $\bar{\rho}_{\nu}$ on $\partitemb{\infty,\star}$ by
\[ \bar{\rho}_{\nu}(\point) = \int_{\masspartemb}\bar{\rho}_{\mathbf{p}}(\point) \,\nu(\dd \mathbf{p}). \]
The following lemma shows that every probability measure on $\partitemb{\infty,\star}$ is of this form.

\begin{lemma} \label{lem:partitemb_asymptotic_frequencies}
Let $\pi=(\zeta, \xi)$ be a random exchangeable element of $\partitemb{\infty,\star}$.
Then $\pi$ has asymptotic frequencies $\abs{\pi}^{\downarrow}\in\masspartemb$ a.s.\ and its distribution conditional on $\abs{\pi}^{\downarrow} = \mathbf{p}$ is $\bar{\rho}_{\mathbf{p}}$.
In other words, we have
\[ \P(\pi\in \point) = \int_{\masspartemb}\P(\abs{\pi}^{\downarrow}\in\dd \mathbf{p})\, \bar{\rho}_{\mathbf{p}}(\point). \]
\end{lemma}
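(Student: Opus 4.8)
The plan is to mimic the classical proof that exchangeable random partitions have asymptotic frequencies and are mixtures of paintboxes (Kingman's correspondence, see \citet[Theorem 2.1]{bertoinRandom2006}), but carried out simultaneously for the nested structure $\pi = (\zeta,\xi)$ with its distinguished element $\star$. First I would establish the existence of asymptotic frequencies. Since $\pi$ is exchangeable, so is each of the univariate partitions $\zeta$ and $\xi$ (as partitions of $[\infty]_\star$, or of $\N$ after removing $\star$); by Kingman's theorem each has asymptotic frequencies almost surely. Hence $\abs{\pi}^\downarrow$ is well-defined as an element of the product space; the ordering and tie-breaking conditions in~\eqref{eq:def_masspartemb} make the representative $\abs{\pi}^\downarrow\in\masspartemb$ unique. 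The nesting $\zeta\preceq\xi$ together with the a.s.\ existence of frequencies forces the compatibility relations $\sum_l s_{k,l}\leq\bar s_k$, $\sum_l u_l\leq\bar u$, and $\bar u+\sum_k\bar s_k\leq 1$, so indeed $\abs{\pi}^\downarrow\in\masspartemb$ a.s.

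Next I would identify the conditional law. The key is a de Finetti / Hewitt--Savage argument. Fix a realization of the asymptotic frequencies $\mathbf p$. I would show that, conditionally on $\abs{\pi}^\downarrow=\mathbf p$, the partition $\pi$ is distributed as $\bar\rho_{\mathbf p}$ by checking that the two laws assign the same probability to every cylinder event, i.e.\ every event of the form $\{\pi_{|[n]_\star}=\omega\}$ for $\omega\in\partitemb{n,\star}$. For the paintbox law $\bar\rho_{\mathbf p}$ this probability is an explicit polynomial in the coordinates of $\mathbf p$ (a product of factors $u_{l}$, $s_{k,l}$, $\bar u$, $\bar s_k$ according to which blocks the integers $1,\dots,n$ and $\star$ fall into, with dust integers contributing the leftover mass $1-\bar u-\sum_k\bar s_k$, etc.), exactly as written in the ``block of size $n$ fragments'' display of Section~\ref{sec:univariate_results} but now bookkeeping the two-level structure. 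For the conditional law of $\pi$, I would invoke exchangeability: by the law of large numbers the empirical frequencies of the blocks of $\zeta_{|[n]}$ and $\xi_{|[n]}$ converge to the entries of $\mathbf p$, and a standard argument (as in Kingman, using that the labels within each asymptotic block are an exchangeable, hence i.i.d.-like, assignment) shows that conditionally on $\mathbf p$ the allocation of $\{1,\dots,n,\star\}$ to blocks is precisely the multinomial-type allocation produced by the paintbox with parameter $\mathbf p$. Matching the two polynomial expressions on all cylinders, and using that cylinder sets generate the Borel $\sigma$-algebra of $\partitemb{\infty,\star}$, gives $\P(\pi\in\point\mid\abs{\pi}^\downarrow=\mathbf p)=\bar\rho_{\mathbf p}(\point)$, and integrating over the law of $\abs{\pi}^\downarrow$ yields the displayed formula.

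I expect the main obstacle to be the bookkeeping in the de Finetti step: one must handle the distinguished block containing $\star$ (which has a special role — it is the unique outer block guaranteed to receive a prescribed mass $\bar u$ and is singled out in $\masspartemb$) and the dust at both the inner and outer levels simultaneously, while verifying that the tie-breaking rule in~\eqref{eq:def_masspartemb} really does make $\abs{\pi}^\downarrow$ measurable and uniquely defined. A clean way to organize this is to reduce to the univariate case twice: first apply Kingman's correspondence to $\xi$ to get its outer block frequencies $(\bar u,(\bar s_k))$ and the allocation of integers to outer blocks, then, conditionally on the outer blocks, apply Kingman's correspondence \emph{within} each outer block $B$ to the restricted partition $\zeta_{|B}$, which is again exchangeable given $B$; this recovers the inner frequencies $(u_l)$ (in the $\star$-block) and $(s_{k,l})$ (in the other blocks) and shows the inner allocations are conditionally independent across outer blocks. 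Assembling these two layers gives exactly the paintbox $\bar\rho_{\mathbf p}$, completing the proof.
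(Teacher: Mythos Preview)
Your strategy is correct, and the paper's proof is indeed a de~Finetti argument, but the paper makes the ``encode as an exchangeable sequence'' step concrete in a way you leave vague. Rather than checking cylinder probabilities by hand or iterating Kingman in two layers, the paper introduces independent uniforms $(X_i),(Y_i)$ on $[0,1]$ and sets
\[
Z_n := \begin{cases}(X_{A_n},Y_{B_n}) & \text{if }\star\nsim^\xi n,\\ (X_{A_n},\star) & \text{if }\star\sim^\xi n,\end{cases}
\qquad A_n:=\min\{m:m\sim^\zeta n\},\ B_n:=\min\{m:m\sim^\xi n\}.
\]
The sequence $(Z_n)_{n\geq1}$ is exchangeable in $[0,1]\times([0,1]\cup\{\star\})$, so a \emph{single} application of de~Finetti produces a random directing measure $P$; the bivariate mass partition $\abs{\pi}^\downarrow$ is then read off from the atomic structure of $P$ (its $y$-atoms give $\bar u,(\bar s_k)$, and the $x$-atoms within each $y$-fibre give $(u_l),(s_{k,l})$), and sampling i.i.d.\ from $P$ is visibly the paintbox $\bar\rho_{\abs{P}^\downarrow}$. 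This colouring trick handles both levels of the nesting and the distinguished element $\star$ simultaneously.

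Your alternative two-layer route (Kingman on $\xi$, then Kingman on each $\zeta_{|B}$) would also succeed, but the step ``inner allocations are conditionally independent across outer blocks'' is not free: conditioning on $\xi$ gives invariance of $(\zeta_{|B})_{B\in\xi}$ under the \emph{product} of within-block symmetric groups, and product-group exchangeability does not by itself decouple the components --- the inner directing measures across distinct outer blocks could a priori be correlated. You would need a further argument (e.g.\ conditioning additionally on all inner frequencies and invoking Hewitt--Savage block by block) to obtain the independence. The paper's single-sequence device is precisely what makes this two-level bookkeeping disappear.
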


\begin{proof}
Independently from $\pi$, let $(X_i, i\geq 1)$ and $(Y_i, i\geq 1)$ be i.i.d.\ uniform random variables on $[0,1]$.
Conditional on $\pi$, we define a random variable $Z_n \in[0,1]\times([0,1]\cup\{\star\}) $ for each $n\in\N$ by
\[ Z_n := \begin{cases}
 (X_{A_n}, Y_{B_n})&\text{if } \star\nsim^{\xi}n,\\
 (X_{A_n}, \star) &\text{if } \star\sim^{\xi}n,
\end{cases}\quad \text{ where } 
\begin{cases}
A_n := \min\{m \in \N, \;m\sim^{\zeta} n\} \\
B_n := \min\{m \in \N, \;m\sim^{\xi} n\}.
\end{cases} \]
It is straight-forward that we recover entirely $\pi$ from the sequence $(Z_n, n\geq 1)$ because we have
\begin{equation} \label{eq:pi_from_zn}
\begin{aligned} 
n\sim^{\zeta}m &\iff x(Z_n) = x(Z_m),\\
n\sim^{\xi}m &\iff y(Z_n) = y(Z_m),\\
n\sim^{\xi}\star &\iff y(Z_n) = \star,
\end{aligned}
\end{equation}
where $x$ and $y$ denote respectively the projection maps from $[0,1]\times([0,1]\cup\{\star\})$ to the first and second coordinates.
Now, notice that the exchangeability of $\pi$ implies that the sequence $(Z_n,n\geq 1)$ is an exchangeable sequence of random variables.
Then, by an application of De Finetti's theorem, we see that there is a random probability measure $P$ on $[0,1]\times\nobreak([0,1]\cup\{\star\})$ such that conditional on $P$, the sequence $(Z_n, n\geq 1)$ is i.i.d.\ distributed with distribution $P$.

Now notice that if $P$ is a probability measure on $[0,1]\times([0,1]\cup\{\star\})$, we can define
\[ \abs{P}^{\downarrow} = \left ((u_l)_{l\geq 1}, (s_{k,l})_{k,l\geq 1}, \bar{u}, (\bar{s}_k)_{k\geq 1}\right ) \in\masspartemb \]
by setting the following, where everything is numbered in an order compatible with our conditions~\eqref{eq:def_masspartemb}.
\begin{itemize}
\item $\bar{u} := P(y=\star)$.
\item $\bar{s}_k := P(y=y_k)$, where $(y_k, k\geq 1)$ is the injective sequence of points of $[0,1]$ such that $P(y=y_k)>0$.
\item $u_l := P(x=x_{\star,l}, y=\star)$ where $(x_{\star,l}, l\geq 1)$ is the injective sequence of points of $[0,1]$ such that $P(x=x_{\star,l}, y=\star)>0$.
\item $s_{k,l} := P(x=x_{k,l}, y=y_k)$ where $(x_{k,l}, l\geq 1)$ is the injective sequence of points of $[0,1]$ such that $P(x=x_{k,l}, y=y_k)>0$.
\end{itemize}
It should now be clear that defining with~\eqref{eq:pi_from_zn} a random $\pi\in\partitemb{\infty,\star}$ from a sequence $(Z_n,n\geq\nobreak 1)$ of $P$-i.i.d.\ random variables is in fact the same as defining $\pi$ from a paintbox construction $\bar{\rho}_{\mathbf{p}}$ with $\mathbf{p} = \abs{P}^{\downarrow}$.
Therefore, the distribution of $\pi$ is given by
\[ \P(\pi\in\point) = \int_{\masspartemb}\P(\abs{P}^{\downarrow}\in\dd\mathbf{p})\, \bar{\rho}_{\mathbf{p}}(\point), \]
which concludes the proof since for any $\mathbf{p}$ we have $\bar{\rho}_{\mathbf{p}}$-a.s. that $\abs{\pi}^{\downarrow}$ exists and is equal to~$\mathbf{p}$.
\end{proof}

\subsection{Erosion and dislocation for nested partitions}

As in the standard $\partit{\infty}$ case, we can decompose any exchangeable measure $\mu$ on $\partitemb{\infty,\star}$ satisfying some finiteness condition similar to \eqref{eq:mu_sig_finite_simple} in a canonical way.
To ease the notation, recall that we define for $n\in\N\cup\{\infty\}$, $\pi_n$ the maximal element in $\partitemb{n, \star}$
\[ \pi_n := (\{\{\star\},[n]\},\, \mathbf{1}_{[n]_{\star}}). \]
We also define two erosion measures $\mathfrak{e}^1$ and $\mathfrak{e}^2$ by
\begin{align*}
&\mathfrak{e}^1 = \sum_{i\geq 1} \delta_{(\{\{\star\},\{i\},[\infty]\setminus\{i\}\},\mathbf{1}_{[\infty]_{\star}})},\\
&\mathfrak{e}^2 = \sum_{i\geq 1} \delta_{(\{\{\star\},\{i\},[\infty]\setminus\{i\}\},\;\{\{i\},[\infty]_{\star}\setminus\{i\}\})}.
\end{align*}
Finally, we define $\mathbf{1} \in\masspartemb$ as the element $ \left ((u_l)_{l\geq 1}, (s_{k,l})_{k,l\geq 1}, \bar{u}, (\bar{s}_k)_{k\geq 1}\right )\in\masspartemb$ with $\bar{u} = u_1 = 1$ (note that $\bar{\rho}_{\mathbf{1}} = 
\delta_{\pi_\infty}$).

\begin{prop} \label{prop:mu_canonical_decomposition}
Let $\mu$ be an exchangeable measure on $\partitemb{\infty,\star}$ such that
\begin{equation}\label{eq:mu_sig_finite_inner}
\mu(\{\pi_\infty\}) = 0, \text{ and }\; \forall n\geq 1, \quad \mu(\pi_{|[n]_{\star}} \neq \pi_n) < \infty.
\end{equation}
Then there are two real numbers $c_1,c_2 \geq 0$ and a measure $\nu$ on $\masspartemb$ satisfying
\eqref{eq:nu_condition}, namely
\[
\nu(\{\mathbf{1}\}) = 0, \text{ and }\; \int_{\masspartemb} (1 - u_1) \, \nu(\dd \mathbf{p}) < \infty
\]
such that $\mu = c_1 \mathfrak{e}^1 + c_2 \mathfrak{e}^2 + \bar{\rho}_{\nu}$.
Conversely, any $\mu$ of this form is exchangeable and satisfies~\eqref{eq:mu_sig_finite_inner}.
\end{prop}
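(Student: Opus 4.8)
\emph{Proof plan.}
The plan is to adapt Bertoin's proof of the univariate decomposition \cite[Theorem~3.1]{bertoinRandom2006}, using Lemma~\ref{lem:partitemb_asymptotic_frequencies} in place of Kingman's correspondence, and splitting $\mu$ according to whether $\abs{\pi}^{\downarrow}$ equals $\mathbf{1}$ or not (recall from~\eqref{eq:def_masspartemb} that $u_1(\mathbf{p})=1$ iff $\mathbf{p}=\mathbf{1}$). As a preliminary one checks, exactly as in the univariate case, that $\mu$-a.e.\ $\pi$ has asymptotic frequencies, so that $\abs{\pi}^{\downarrow}\in\masspartemb$ is $\mu$-a.e.\ defined. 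The key quantitative input, proved first, is the a priori bound
\[ \int_{\partitemb{\infty,\star}}\big(1-u_1(\abs{\pi}^{\downarrow})\big)\,\mu(\dd\pi)<\infty. \]
To obtain it I would compare $\mu(\pi_{|[2]_\star}\neq\pi_2)$, finite by~\eqref{eq:mu_sig_finite_inner}, with the empirical proportion $h_N(\pi)$ of pairs $\{i,j\}\subset[N]$ with $i\nsim^{\zeta}j$ or $i\nsim^{\xi}\star$: exchangeability gives $\int h_N\,\dd\mu=\mu(\pi_{|[2]_\star}\neq\pi_2)$ for every $N$, whereas the de Finetti structure underlying Lemma~\ref{lem:partitemb_asymptotic_frequencies} yields $\liminf_N h_N(\pi)\geq 1-u_1(\abs{\pi}^{\downarrow})$ $\mu$-a.e. (since $1-h_N$ is dominated by $\frac{N}{N-1}\max_l\frac{|C_l\cap[N]|}{N}$ over the inner blocks $C_l$ of the mother outer block), so Fatou's lemma concludes. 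In particular the permutation-invariant sets $\{u_1(\abs{\pi}^{\downarrow})\leq 1-\delta\}$ have finite $\mu$-mass for every $\delta>0$.

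For the dislocation part $\mu_{\mathrm{d}}:=\mu(\point\cap\{\abs{\pi}^{\downarrow}\neq\mathbf{1}\})$: exhausting $\{\abs{\pi}^{\downarrow}\neq\mathbf{1}\}$ by the finite-mass invariant sets $\{u_1(\abs{\pi}^{\downarrow})\leq 1-\delta\}$ and applying (after normalisation) Lemma~\ref{lem:partitemb_asymptotic_frequencies} on each, the conditional law of $\pi$ given $\abs{\pi}^{\downarrow}=\mathbf{p}$ is $\bar{\rho}_{\mathbf{p}}$, whence $\mu_{\mathrm{d}}=\bar{\rho}_{\nu}$ with $\nu$ the image of $\mu_{\mathrm{d}}$ under the map $\pi\mapsto\abs{\pi}^{\downarrow}$; clearly $\nu(\{\mathbf{1}\})=0$. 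A direct computation from the construction of $\bar{\rho}_{\mathbf{p}}$ gives $\bar{\rho}_{\mathbf{p}}(\pi_{|[2]_\star}=\pi_2)=\sum_{l\geq 1}u_l^2\leq u_1$, so the bound above forces $\int_{\masspartemb}(1-u_1)\,\nu(\dd\mathbf{p})\leq\mu(\pi_{|[2]_\star}\neq\pi_2)<\infty$, i.e.\ $\nu$ satisfies the conditions in~\eqref{eq:nu_condition}.

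The erosion part $\mu_{\mathrm{e}}:=\mu(\point\cap\{\abs{\pi}^{\downarrow}=\mathbf{1}\})$ is the delicate case, because $\mu_{\mathrm{e}}$ is concentrated on the single fibre $\{\abs{\pi}^{\downarrow}=\mathbf{1}\}$, is $\pi_\infty$-null by~\eqref{eq:mu_sig_finite_inner}, and may carry infinite total mass, so Lemma~\ref{lem:partitemb_asymptotic_frequencies} is not directly available. On this event let $C(\pi)$ be the unique inner block of asymptotic frequency $1$ and $R(\pi):=\N\setminus C(\pi)$. A second Fatou-type argument, averaging $\1\{m\nsim^{\zeta}1\}$ over $m\in[N]$ and using $\mu_{\mathrm{e}}(\{m\nsim^{\zeta}1\})=\mu_{\mathrm{e}}(\{1\nsim^{\zeta}2\})\leq\mu(\pi_{|[2]_\star}\neq\pi_2)<\infty$, shows $\mu_{\mathrm{e}}(\{1\in R\})<\infty$. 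Normalising $\mu_{\mathrm{e}}(\point\cap\{1\in R\})$ gives a probability measure invariant under permutations fixing $1$, under which $\zeta_{|\N\setminus\{1\}}$ is an exchangeable partition with all asymptotic frequency on a single block; by Kingman's theorem (Section~\ref{sec:univariate_results}) it is a.s.\ trivial, forcing $R(\pi)=\{1\}$ $\mu_{\mathrm{e}}$-a.e.\ on $\{1\in R\}$, hence $\mu_{\mathrm{e}}(\{\abs{R}\geq 2\})=0$ by exchangeability. Thus $\mu_{\mathrm{e}}$ is carried by the partitions displacing a single integer $i$ out of a full-frequency inner block; separating according to whether $i$ remains in the outer block of $\star$ or forms its own outer block identifies these as the atoms of $\mathfrak{e}^1$, resp.\ $\mathfrak{e}^2$, and exchangeability forces equal masses $c_1,c_2\in[0,\infty)$ on each family (finite because, e.g., each family contributes two atoms to $\{\pi_{|[2]_\star}\neq\pi_2\}$). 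Therefore $\mu_{\mathrm{e}}=c_1\mathfrak{e}^1+c_2\mathfrak{e}^2$ and $\mu=c_1\mathfrak{e}^1+c_2\mathfrak{e}^2+\bar{\rho}_{\nu}$.

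The converse is routine: each summand is exchangeable, $\mathfrak{e}^1(\{\pi_\infty\})=\mathfrak{e}^2(\{\pi_\infty\})=0=\bar{\rho}_{\nu}(\{\pi_\infty\})$ (the last since $\nu(\{\mathbf{1}\})=0$ and $\bar{\rho}_{\mathbf{p}}$ is diffuse for $\mathbf{p}\neq\mathbf{1}$), $\mathfrak{e}^i(\pi_{|[n]_\star}\neq\pi_n)\leq n<\infty$, and $\bar{\rho}_{\nu}(\pi_{|[n]_\star}\neq\pi_n)=\int(1-\sum_l u_l^n)\,\nu(\dd\mathbf{p})\leq n\int(1-u_1)\,\nu(\dd\mathbf{p})<\infty$, using $1-\sum_l u_l^n\leq 1-u_1^n\leq n(1-u_1)$. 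The main obstacle is the erosion step: isolating the atomic erosion mass from the diffuse paintbox component when $\mu$ has infinite total mass — which is precisely where the $\sigma$-finiteness (rather than finiteness) of $\mu$ must be handled by hand, through the two Fatou-type estimates and the reduction to Kingman's theorem on $\N\setminus\{1\}$, following \cite{bertoinRandom2006} (and \cite{Fou11} for the analogous distinguished-block setting).
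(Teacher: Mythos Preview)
Your proposal is correct and follows the same overall architecture as the paper --- split $\mu$ according to whether $\abs{\pi}^{\downarrow}=\mathbf{1}$, identify the ``$\neq\mathbf{1}$'' part as $\bar{\rho}_\nu$, and show the ``$=\mathbf{1}$'' part is a combination of $\mathfrak{e}^1,\mathfrak{e}^2$ --- but the technical execution of both halves is genuinely different from the paper's.

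The paper works throughout with the shift $\theta_n$: it defines $\overleftarrow{\mu}_n=\mu_n^{\theta_n}$, checks it is finite and exchangeable, and uses this single device (i) to get existence of $\abs{\pi}^{\downarrow}$ $\mu$-a.e., (ii) to identify $\phi=\mu(\,\cdot\cap\{\abs{\pi}^{\downarrow}\neq\mathbf{1}\})$ with $\bar{\rho}_\nu$ via a permutation swapping $[k]$ and $[k+1,k+n]$, and (iii) to show that on $\{\abs{\pi}^{\downarrow}=\mathbf{1}\}$ one has $\pi^{\theta_n}=\pi_\infty$ $\psi$-a.e., whence $\psi$ is purely atomic and a short orbit-counting argument forces the two erosion forms. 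You instead (after invoking the shift only for the preliminary existence of frequencies) establish the quantitative a~priori bound $\int(1-u_1)\,\dd\mu<\infty$ by a Fatou argument, use it to exhaust $\{\abs{\pi}^{\downarrow}\neq\mathbf{1}\}$ by the finite-mass invariant slices $\{u_1\leq 1-\delta\}$ on which Lemma~\ref{lem:partitemb_asymptotic_frequencies} applies directly, and treat erosion by a second Fatou estimate giving $\mu_{\mathrm e}(\{1\in R\})<\infty$ followed by univariate Kingman on $\N\setminus\{1\}$. The paper's route is more uniform (one tool, the shift, does everything) and avoids having to argue that $\abs{\pi}^{\downarrow}$ is $\mu$-a.e.\ invariant under permutations; your route makes the integrability condition on $\nu$ appear immediately from the a~priori bound and gives a perhaps more transparent, structural reason for the erosion classification. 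Both are valid adaptations of \cite[Theorem~3.1]{bertoinRandom2006}.
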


\begin{proof}
The proof follows closely that of Theorem 3.1 in \cite{bertoinRandom2006}, as our result is a straight-forward extension of it.
We first define $\mu_n := \mu(\point\cap\{\pi\restr{n}\neq\pi_n\})$ which is a finite measure, and
\[ \overleftarrow{\mu}_n := \mu_n^{\theta_n}, \]
where $\theta_n:\N\to\N$ is the $n$-shift defined by $\theta_n(i) = i+n$.
We can check that $\overleftarrow{\mu}_n$ is an exchangeable measure on $\partitemb{\infty, \star}$.
Indeed let us take $\sigma:\N\to\N$ a permutation, and consider $\tau:\N\to\N$ the permutation defined by
\[ \tau: \begin{cases}
i \leq n &\longmapsto i\\
i>n &\longmapsto n + \sigma^{-1}(i-n).
\end{cases} \]
We have clearly $\tau\circ\theta_n\circ\sigma = \theta_n $ and $\tau\restr{n} = \id_{[n]}$, so we can use the $\tau$-invariance of $\mu$ to conclude
\begin{align*}
\overleftarrow{\mu}_n(\pi^{\sigma} \in\point) &= \mu_n(\pi^{\theta_n\circ\sigma} \in\point)\\
&= \mu(\{\pi^{\theta_n\circ\sigma} \in\point\} \cap \{\pi_{|[n]_{\star}} \neq \pi_n\})\\
&= \mu(\{\pi^{\tau\circ\theta_n\circ\sigma} \in\point\} \cap \{(\pi^{\tau})_{|[n]_{\star}} \neq \pi_n\})\\
&= \mu(\{\pi^{\theta_n} \in\point\} \cap \{\pi_{|[n]_{\star}} \neq \pi_n\})\\
&= \overleftarrow{\mu}_n(\point),
\end{align*}
which proves that $\overleftarrow{\mu}_n$ is exchangeable.
Since it is also finite, Lemma~\ref{lem:partitemb_asymptotic_frequencies} implies that $\abs{(\pi^{\theta_n})}^{\downarrow} = \abs{\pi}^{\downarrow}$ exists $\mu$-a.e.\ on the event $\{\mu_{|[n]_\star} \neq \pi_n\}$, and that we have
\[ \overleftarrow{\mu}_n(\point) = \int_{\masspartemb}\mu_n(\abs{\pi}^{\downarrow}\in\dd \mathbf{p}) \; \bar{\rho}_{\mathbf{p}}(\point). \]
Now since $\cup_n \{\pi_{|[n]_\star} \neq \pi_n\} = \{\pi \neq \pi_\infty\}$ and $\mu(\{\pi = \pi_\infty\}) = 0$, we have necessarily the existence of $\abs{\pi}^{\downarrow}\in\masspartemb$ $\mu$-a.e.

Let us define $\phi(\point) := \mu(\point\cap\{\abs{\pi}^{\downarrow} \neq \mathbf{1}\})$.
Fix $k\in \N$, and consider the measure $\phi(\pi_{|[k]_{\star}} \in\point)$ on $\partitemb{k,\star}$.
We use the fact that $\mu(B) = \lim_{n\to\infty}\mu_n(B)$ for any Borel set $B\subset \partitemb{\infty,\star}$ and that $\mu_n(\abs{\pi}^{\downarrow} \in \point) = \overleftarrow{\mu}_n(\abs{\pi}^{\downarrow} \in \point)$ to write
\begin{align*}
\phi(\pi_{|[k]_{\star}} \in\point) &= \mu(\{\pi_{|[k]_{\star}} \in\point\}\cap\{\abs{\pi}^{\downarrow} \neq \mathbf{1}\})\\
&= \lim_{n\to\infty} \mu\left (\{\pi_{|[k]_{\star}} \in\point\}\cap\{\abs{\pi}^{\downarrow} \neq \mathbf{1},\; (\pi^{\theta_{k}})_{|[n]_\star} \neq \pi_n\}\right )\\
&= \lim_{n\to\infty} \mu\left (\{(\pi^{\theta_n})_{|[k]_{\star}} \in\point\}\cap\{\abs{\pi}^{\downarrow} \neq \mathbf{1},\; \pi_{|[n]_\star} \neq \pi_n\}\right )\\
&= \lim_{n\to\infty} \mu_n\left (\{(\pi^{\theta_n})_{|[k]_{\star}} \in\point\}\cap\{\abs{\pi}^{\downarrow} \neq \mathbf{1}\}\right )\\
&= \lim_{n\to\infty} \overleftarrow{\mu}_n\left (\{\pi_{|[k]_{\star}} \in\point\}\cap\{\abs{\pi}^{\downarrow} \neq \mathbf{1}\}\right )\\
&= \lim_{n\to\infty} \int_{\masspartemb\setminus\{\mathbf{1}\}} \mu_n(\abs{\pi}^{\downarrow} \in\dd \mathbf{p}) \bar{\rho}_{\mathbf{p}}(\pi_{|[k]_{\star}} \in\point)\\
&= \int_{\masspartemb\setminus\{\mathbf{1}\}} \mu(\abs{\pi}^{\downarrow} \in\dd \mathbf{p}) \;\bar{\rho}_{\mathbf{p}}(\pi_{|[k]_{\star}} \in\point).
\end{align*}
Note that the passage from the second to the third line follows from invariance of~$\mu$ under the permutation $ \sigma : \N \to\N $ defined by
\[
  \sigma : \begin{cases}
    i \in\{1,\ldots k\} &\mapsto i+n,\\
    i \in \{k+1, \ldots, k+n\} &\mapsto i-k,\\
    i \geq k+n+1 &\mapsto i.
  \end{cases}
\]
Since this is true for all $k$, we have
\[ \phi(\point) = \int_{\masspartemb\setminus\{\mathbf{1}\}} \mu(\abs{\pi}^{\downarrow} \in\dd \mathbf{p}) \;\bar{\rho}_{\mathbf{p}}(\point) = \bar{\rho}_{\nu}, \]
with $\nu(\point) = \mu(\{\abs{\pi}^{\downarrow} \in\point\}\cap\{\abs{\pi}^{\downarrow} \neq \mathbf{1}\})$.
Now notice that the paintbox construction of the probabilities $\bar{\rho}_\mathbf{p}$ implies that
\[ \bar{\rho}_\nu(\pi_{|[n]_{\star}} \neq \pi_n) = \int_{\masspartemb}\nu(\dd \mathbf{p}) \bigg (1-\sum_{l\geq 1}u_l^{n}\bigg ), \]
and that since $u_1\geq u_2 \geq \ldots...$ and $\sum_l u_l\leq 1$, we have for $n\geq 2$,
\[ 1-u_1\leq 1 - u_1\textstyle\sum_l u_l^{n-1} \leq 1-\textstyle\sum_l u_l^{n} \leq 1-u_1^n \leq n(1-u_1). \]
Integrating with respect to $\nu$, we find that clearly $\bar{\rho}_\nu$ satisfies~\eqref{eq:mu_sig_finite_inner} iff $\nu$ satisfies~\eqref{eq:nu_condition}.

We now write $\psi(\point) := \mu(\point\cap\{\abs{\pi}^{\downarrow}=\mathbf{1}\})$ so that $\mu = \phi + \psi = \bar{\rho}_\nu + \psi$.
Take a number $n\in\N$.
We know that $\overleftarrow{\psi}_n(\point) :=\psi(\{\pi^{\theta_n}\in\point\}\cap\{\pi_{|[n]_{\star}} \neq \pi_n\})$ is a finite exchangeable measure on $\partitemb{\infty,\star}$ such that $\abs{\pi}^{\downarrow}=\mathbf{1}$ $\overleftarrow{\psi}_n$-a.e.
Now recall that $\bar{\rho}_{\mathbf{1}}=\delta_{\pi_\infty}$.
A consequence of Lemma~\ref{lem:partitemb_asymptotic_frequencies} is that $\pi = \pi_\infty$ $\overleftarrow{\psi}_n$-a.e., which in turn implies that $\psi$-a.e. on the event $\{\pi_{|[n]_{\star}} \neq \pi_n\}$, we have $\pi^{\theta_n} = \pi_\infty$.
Since there is only a finite number of elements $\pi\in\partitemb{\infty,\star}$ such that $\pi^{\theta_n} = \pi_\infty$, we have
\[ \psi(\point\cap\{\pi_{|[n]_{\star}} \neq \pi_n\}) = \sum_i a_i \delta_{\widehat{\pi}_i}, \]
where the sum is finite, and for each $i$, we have $\widehat{\pi}_i^{\theta_n} = \pi_\infty$.
Now suppose we have $\psi(\{\widehat{\pi}\})>0$, for a $\widehat{\pi}\in\partitemb{\infty,\star}$ such that $\widehat{\pi}^{\theta_n} = \pi_\infty$.
Let $I(\widehat{\pi}) := \{\widehat{\pi}^{\sigma}, \sigma \text{ permutation}\}$.
By the exchangeability of $\psi$, we have necessarily $\psi(\{\pi\}) = \psi(\{\widehat{\pi}\}) > 0$ for any $\pi\in I(\widehat{\pi})$.
Since for any $m\in\N$ we have $\psi(\pi_{|[m]_{\star}}\neq\pi_m)<\infty$, we deduce
\begin{equation}\label{eq:nb_perm_finite}
\#\{\pi\in I(\widehat{\pi}), \; \pi_{|[m]_{\star}} \neq \pi_m\} \leq \psi(\pi_{|[m]_{\star}}\neq\pi_m) / \psi(\{\widehat{\pi}\}) < \infty.
\end{equation}
We claim that the elements $\widehat\pi=(\widehat{\zeta},\widehat{\xi})\in\partitemb{\infty, \star}$ satisfying $\widehat{\pi}^{\theta_n} = \pi_\infty$ and~\eqref{eq:nb_perm_finite} for any $m$ are such that
$\widehat{\zeta}$ and $\widehat{\xi}$ have no more than two blocks, and in that case one of the blocks is a singleton.
Indeed if $1\sim 2\nsim 3 \sim 4$ for $\widehat{\xi}$ or $\widehat{\zeta}$, then the permutations $\sigma_i = (2,i+2) (4,i+4)$, written as a composition of two transpositions, are such that for $i\neq j \geq n$ and $m\geq 3$, $\widehat{\pi}^{\sigma_i} \neq \widehat{\pi}^{\sigma_j}$ and $\widehat{\pi}^{\sigma_i}_{|[m]_{\star}} \neq \pi_m$.
So having two blocks with two or more integers contradicts~\eqref{eq:nb_perm_finite}.
One can check in the same way that the situation $1\nsim 2 \nsim 3$ is also contradictory.

Putting everything together, we necessarily have
\begin{itemize}
\item either $\widehat{\pi} = (\{\{\star\}, \{i\}, \N\setminus\{i\}\}, \mathbf{1}_{[\infty]_{\star}})$ for an $i\in\N$,
\item or $\widehat{\pi} = (\{\{\star\}, \{i\}, \N\setminus\{i\}\}, \{\{i\}, [\infty]_{\star}\setminus\{i\}\})$ for an $i\in\N$.
\end{itemize}
We conclude using the exchangeability of $\psi$ that there exists two real numbers $c_1,c_2 \geq 0$ such that $\psi = c_1\mathfrak{e}^1 + c_2\mathfrak{e}^2$, enabling us to write
\[ \mu = \phi+\psi = \bar{\rho}_\nu + c_1\mathfrak{e}^1 + c_2\mathfrak{e}^2, \]
which concludes the proof.
\end{proof}

Applying this result to $ \widetilde{\mu}_{\mathrm{in}} $ implies the existence of $c_{\mathrm{in},1},c_{\mathrm{in},2}\geq 0$ and $\nu_{\mathrm{in}}$ a measure on $\masspartemb$ satisfying~\eqref{eq:nu_condition} such that 
\[ \widetilde{\mu}_{\mathrm{in}} = c_{\mathrm{in},1}\mathfrak{e}^1 + c_{\mathrm{in},2}\mathfrak{e}^2+\bar{\rho}_{\nu_{\mathrm{in}}}. \]
This allows us to conclude the proof of Theorem~\ref{thm:inner_charac} because with our definitions in Section~\ref{sec:examples}, we translate this equality into
\[ \mu_{\mathrm{in}} =c_{\mathrm{in},1}\mathfrak{e}^{\mathrm{in},1}+c_{\mathrm{in},2}\mathfrak{e}^{\mathrm{in},2} + \widetilde{\rho}_{\nu_{\mathrm{in}}}. \]
Combining this with Lemma~\ref{lem:intermediate}, we conclude
\[ \mu =c_{\mathrm{out}}\mathfrak{e}^{\mathrm{out}}+c_{\mathrm{in},1}\mathfrak{e}^{\mathrm{in},1}+c_{\mathrm{in},2}\mathfrak{e}^{\mathrm{in},2} +\widehat{\rho}_{\nu_{\mathrm{out}}}+ \widetilde{\rho}_{\nu_{\mathrm{in}}}. \]

\section{Application to binary branching}\label{sec:binary}

Consider a simple nested fragmentation process $(\Pi(t),\,t\geq 0) = (\zeta(t),\xi(t),\,t\geq 0)$ with only binary branching.
The representation given by Theorem~\ref{thm:inner_charac} then becomes quite simpler, because the dislocation measures $\nu_{\mathrm{out}}$ and $\nu_{\mathrm{in}}$ necessarily satisfy
\[ s_1 = 1-s_2 \qquad \nu_{\mathrm{out}}\text{-a.e.} \] 
and
\[ \begin{cases}
&u_1=1-u_2\\
\text{or }&s_{1,1}=1-s_{1,2}\\
\text{or }&u_1=1-s_{1,1}
\end{cases}
\qquad \nu_{\mathrm{in}}\text{-a.e.}, \]
i.e.\ their support is the set of mass partitions with only two nonzero terms, and no dust.
See Figure \ref{fig:binarytree} for an example of a nested discrete tree illustrating the three possible dislocation events corresponding to $\nu_{\mathrm{in}}$.
\begin{figure}[ht]
  \centering
  \includegraphics[width=.8\linewidth]{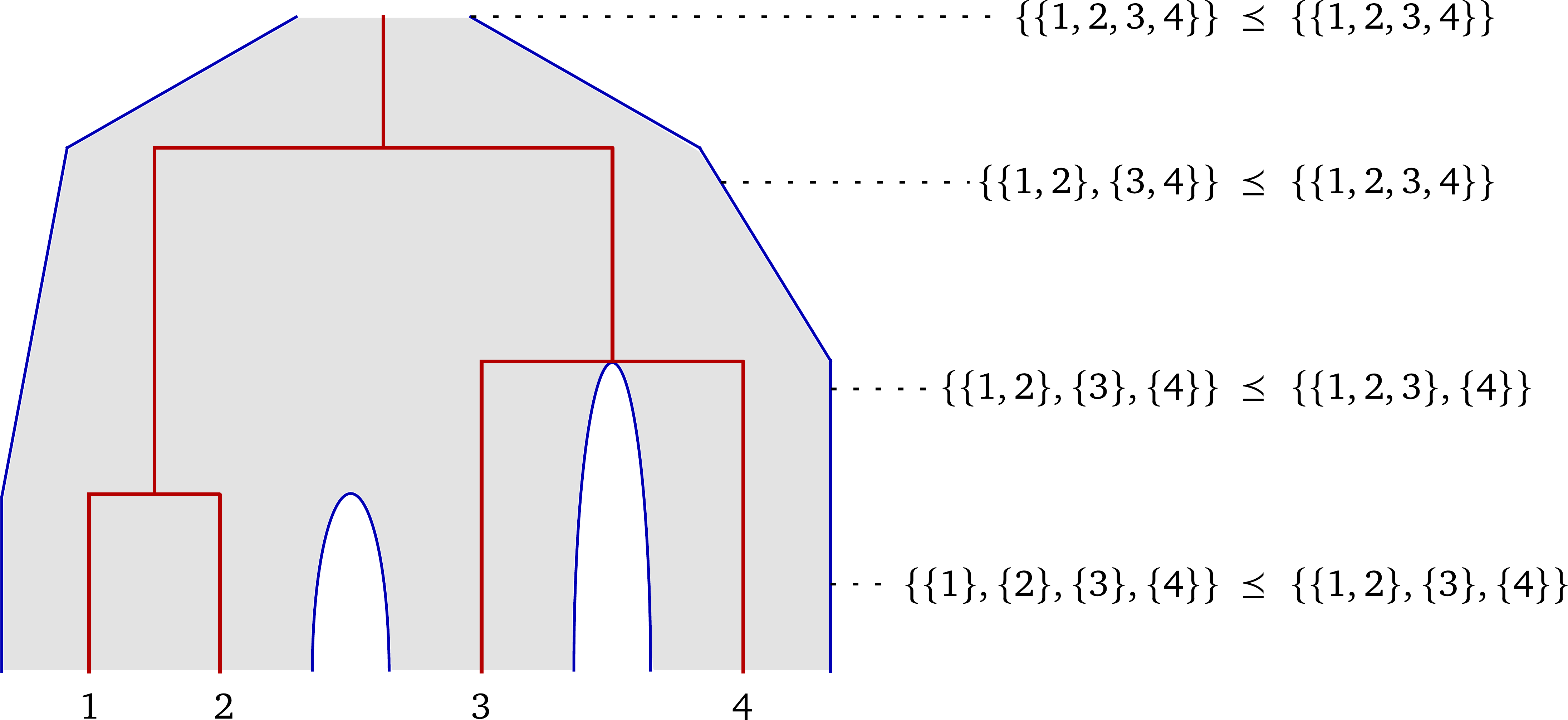}
  \captionsetup{justification=raggedright}
  \caption{Binary nested tree exhibiting the three different inner dislocation events.
    Time flows from top to bottom, and the right-hand side of the picture shows the sequence of nested partitions picked at chosen times between events, in the form $\pi = (\zeta \preceq \xi)$.
    The first event corresponds to the case $u_1=1-u_2$, where the inner block $\{1,2,3,4\}$ splits into two blocks $\{1,2\}$ and $\{3,4\}$ and the outer block remains unchanged.
    The second dislocation is of the type $u_1=1-s_{1,1}$, that is the block $\{3,4\}$ splits in two distinct blocks, one of which (the singleton $\{3\}$) stays in the ``mother'' outer block.
    The other new inner block $\{4\}$ forms a new outer block identical to itself.
    The last and third dislocation is of the type $s_{1,1}=1-s_{1,2}$, meaning that $\{1,2\}$ splits into $\{1\}$ and $\{2\}$, these two blocks together forming a new outer block, distinct from the mother block -- i.e.\ the one containing $\{3\}$.
  } \label{fig:binarytree}
  \captionsetup{justification=centering}
\end{figure}

Therefore, we can decompose $\nu_{\mathrm{out}}$ and $\nu_{\mathrm{in}}$ into four measures on $[0,1]$ defined by
\begin{gather*}
\bar\nu_{\mathrm{out}}(\point) := \nu_{\mathrm{out}}(s_1\in\point)+\nu_{\mathrm{out}}(1-s_1\in\point)\\
\bar\nu_{\mathrm{in},1}(\point) := \1\{u_1=1-u_2\}(\nu_{\mathrm{in}}(u_1\in\point)+\nu_{\mathrm{in}}(1-u_1\in\point))\\
\bar\nu_{\mathrm{in},2}(\point) := \1\{s_{1,1}=1-s_{1,2}\}(\nu_{\mathrm{in}}(s_{1,1}\in\point)+\nu_{\mathrm{in}}(1-s_{1,1}\in\point))\\
\bar\nu_{\mathrm{in},3}(\point) := \1\{u_1=1-s_{1,1}\}\nu_{\mathrm{in}}(u_1\in\point).
\end{gather*}
Thus defined, and because of the $\sigma$-finiteness conditions~\eqref{eq:nu_condition_simple} and~\eqref{eq:nu_condition}, those measures satisfy the following
\begin{gather}
\bar\nu_{\mathrm{out}},\bar\nu_{\mathrm{in},1}\text{ and }\bar\nu_{\mathrm{in},2}\text{ are }(x\mapsto 1-x)\text{-invariant}\\
\int_{[0,1]}\nu(\dd x) x(1-x) <\infty,\text{ for }\nu \in \{\bar\nu_{\mathrm{out}},\bar\nu_{\mathrm{in},1}\}\\
\bar\nu_{\mathrm{in},2}([0,1]) <\infty\\
\int_{[0,1]}\bar\nu_{\mathrm{in},3}(\dd x) (1-x) <\infty.
\end{gather}

\vspace{3cm} 
For the sake of completeness, let us use those measures to express the transition rates $q^{n}_{\pi,\pi'}$ of the Markov chain $\Pi^{n}:=(\Pi(t)\restr{n})$ from one nested partition $\pi=(\zeta,\xi)\in\partitemb{n}$ to another $\pi'=(\zeta',\xi')\in\partitemb{n}\setminus\{\pi\}$ in the following way:
\begin{itemize}
\item If $\pi'$ cannot be obtained from a binary fragmentation of $\pi$, then $q^{n}_{\pi,\pi'}=0$.
\item If $\pi'$ can be obtained from a binary fragmentation of $\pi$, with $B\in\zeta$ and $C\in\xi$ two blocks of $\pi$ participating in the fragmentation, but such that $B\nsubset C$, then $q^{n}_{\pi,\pi'}=0$.
\item Otherwise, let us write $B\subset C$, with $B\in\zeta$ and $C\in\xi$ for (the) two blocks of $\pi$ participating in the fragmentation, and $B_1,B_2\in\zeta'$, $C_1,C_2\in\xi'$ the resulting blocks, chosen in a way that $B_1\subset C_1$.
Note that $B$ or $C$ might not fragment, in which case we let $B_2$ or $C_2$ be the empty set $\emptyset$.
Now define $X_1:=\#B_1$ and $X_2:=\#B_2$ the cardinal of the resulting blocks of $\zeta'$, and $X:=\min(X_1,X_2)$.
Also, we define $Y_1:=\#\zeta'_{|C_1}$ the number of inner blocks in $C_1$ in the resulting partition $\pi'$, and similarly $Y_2:=\#\zeta'_{|C_2}$, and finally $Y:=\min(Y_1,Y_2)$.
\end{itemize}
With those definitions, the transition rates for the Markov chain $\Pi^{n}$ can be written
\begin{equation}\label{eq:transition_rates}
\begin{aligned}
q^{n}_{\pi,\pi'} = \;&c_{\mathrm{out}}\1\{\zeta'=\zeta,Y=1\} +c_{\mathrm{in},1}\1\{\xi'=\xi,X=1\} \\
&+c_{\mathrm{in},2}\1\{X_1=Y_1=1\text{ or }X_2=Y_2=1\}\\
&+ \1\{\zeta'=\zeta\}\int_{[0,1]}\bar\nu_{\mathrm{out}}(dx) x^{Y_1}(1-x)^{Y_2}\\
&+ \1\{\xi'=\xi\}\int_{[0,1]}\bar\nu_{\mathrm{in},1}(dx) x^{X_1}(1-x)^{X_2}\\
& +\1\{B_1\cup B_2=C_1\}\int_{[0,1]}\bar\nu_{\mathrm{in},2}(dx) x^{X_1}(1-x)^{X_2}\\
&+\1\{\zeta'=\zeta\text{ or }B_2\nsubset C_1\}\int_{[0,1]}\bar\nu_{\mathrm{in},3}(dx) \big(x^{X_1}(1-x)^{X_2}\1\{B_2=C_2\} \\ &\hspace{16em}+x^{X_2}(1-x)^{X_1}\1\{B_1=C_1\}\big).
\end{aligned}
\end{equation}
Note that several indicator functions in the last display may be one for the same pair $(\pi,\pi')$.
This explicit formula allows for computer simulations of binary simple nested fragmentations, although to that aim it might be simpler to adapt the Poissonian construction (Section~\ref{sec:poisson_construct}) and use nested partitions on arrays $[n]^2$.
Also, one could exactly compute the probability of a given nested tree under different nested fragmentation models, which would be a first step towards statistical inference.



\paragraph{Acknowledgements.}
I thank the \emph{Center for Interdisciplinary Research in Biology} (Collège de France) for funding, and I am grateful to my supervisor Amaury Lambert for his careful reading and many helpful comments on this project.

\phantomsection
\addcontentsline{toc}{section}{References}

\emergencystretch=.5em


\end{document}